\newtheorem{thm}{Theorem}[section]
\newtheorem{prop}[thm]{Proposition}
\newtheorem{lem}[thm]{Lemma}
\newtheorem{lemA}{Lemma}[section] 							
\newtheorem{cor}[thm]{Corollary}
\newtheorem{asm}{Assumption}
\newtheorem{op}[thm]{Question}
\theoremstyle{remark}
\theoremstyle{definition}
\newtheorem{defn}{Definition}
\newcommand{\ra}{\rightarrow}
\newcommand{\N}{\mathbb N}     
\newcommand{\R}{\mathbb R}     
\newcommand{\Z}{\mathbb Z}     
\renewcommand{\d}{\delta}
\newcommand{\e}{\varepsilon}
\newcommand{\del}{\partial}
\newcommand{\w}{\omega}
\renewcommand{\l}{\lambda}
\renewcommand{\L}{\Lambda}
\newcommand{\bL}{\overline{\Lambda}}
\newcommand{\s}{\sigma}
\renewcommand{\P}{\mathbb{P}}   
\newcommand{\bP}{\overline{\mathbb{P}}}
\newcommand{\E}{\mathbb{E}}   
\newcommand{\bE}{\overline{\mathbb{E}}}
\newcommand{\vp}{\mathrm{v}_P}
\newcommand{\iid}{i.i.d.\ }
\title{On the Annealed Large Deviation Rate Function for a Multi-Dimensional Random Walk in Random Environment}
\author{Jonathon Peterson\thanks{
School of Mathematics, 
University of Wisconsin, 480 Lincoln Drive, Madison, WI 53705. 
The research of the author was partially supported by NSF grants 
DMS-0503775 and DMS-0802942 and by a Doctoral Dissertation 
Fellowship from the University of Minnesota.} \and Ofer Zeitouni\thanks{School
of Mathematics, University of Minnesota, 206 Church St. SE, Minneapolis, 
MN 55455 and Faculty of Mathematics, Weizmann Institute of Science, Rehovot
76100, Israel.
The research of the author was partially supported by NSF grants  
DMS-0503775 and DMS-0804133,
and by a grant from the Israel Science Foundation.}
}
\date{December 14, 2008. Revised July 9, 2009}
\begin{document}

\maketitle

\begin{abstract}
We derive properties of the rate function in Varadhan's (annealed) large
deviation principle for multi-dimensional, ballistic
random walk in random environment, in a certain neighborhood 
of the zero set of the rate function. Our approach relates
the LDP to that of regeneration times and distances. The analysis 
of the latter is possible due to the i.i.d. structure of regenerations.
\end{abstract}

\begin{section}{Introduction and Statement of Main Results}\label{RWRELDP}

This paper studies
annealed large deviations
for multi-dimensional random walks in random environments (RWRE),
in the ballistic regime. 
We will be concerned with nearest neighbor 
RWRE with uniformly elliptic \iid environments, modeled as follows. 
Let $\mathcal{E}_d:= \{ x\in \Z^d: \|x\| = 1\}$ and let $\Omega := \left( \mathcal{M}(\mathcal{E}_d) \right)^{\Z^d}$, where $\mathcal{M}(\mathcal{E}_d)$ is the space of all probability measures on $\mathcal{E}_d$.
Let $\mathcal{F}$ be the $\s$-field generated by the cylinder sets
of $\Omega$, and let $P$ be a probability measure on
$(\Omega, \mathcal{F})$. A {\it random environment}
$\w = \{ \w(x, \cdot) \}_{x\in \Z^d}$ is an 
$\Omega$-valued random variable with distribution $P$. 
Given an environment $\w$, the {\it quenched}
law $P_\w$ of a RWRE $X_n$ starting at the origin $\mathbf{0}$
is given by
\[
 P_\w( X_0 = \mathbf{0} ) = 1 \quad\text{and}\quad
 P_\w \left( X_{n+1} = x + y | X_n = x \right) = \w(x,y). 
\]
The {\it annealed}
(also called the averaged) law $\P$ of a 
RWRE starting at the origin is defined by 
\[
 \P( \cdot ) = \int P_\w( \cdot ) P(d\w). 
\]
Expectations with respect to the measures $P_\w$ and $\P$ will be 
denoted by $E_\w$ and $\E$, respectively. 

For the remainder of this paper we will assume that the law on environments 
is \emph{uniformly elliptic} and 
\emph{i.i.d.} That is, we will make the following assumptions:
\begin{asm}[Uniformly Elliptic]\label{asmUE}
 There exists an $\e>0$ such that $P\left(\w(0,x) \geq \e, 
 \: \forall x \in \mathcal{E}_d \right)  = 1$.
\end{asm}
\begin{asm}[\iid environments]\label{asmIID}
 The law on environments $P$ is an \iid product measure. That is, 
 $\{ \w(x, \cdot) \}_{x\in \Z^d}$ are \iid under $P$. 
\end{asm}

Recall the following classification of laws on the environment 
that was first introduced in \cite{zLDP}.
\begin{defn}
Let $d(\w):= E_\w X_1$ be the drift at the origin of the environment, 
and let $\mathcal{K}$ be the closure of the convex hull of the support,
under $P$, of all possible drifts. 
If $\mathbf{0} \in \mathcal{K}$, then $P$ is \textbf{nestling}. 
If $P$ is nestling but 
$\mathbf{0}$ does not belong to the 
 interior of $\mathcal{K}$, then $P$ is \textbf{marginally nestling}.
If $\mathbf{0}\notin \mathcal{K}$, then $P$ is \textbf{non-nestling}. 
If $\ell \in \R^d \backslash \{\mathbf{0} \}$ and
$\inf_{x \in \mathcal{K}} x \cdot \ell > 0$ then $P$ is
\textbf{non-nestling in direction $\ell$}. 
\end{defn}

Varadhan has proved the following annealed large deviation principle (LDP)
for RWRE.
\begin{thm}[Varadhan \cite{vLDP}]\label{annealedLDP}
Let Assumptions \ref{asmUE} and \ref{asmIID} hold. 
Then, there exists a convex function $H(v)$ such that $\frac{X_n}{n}$ 
satisfies an annealed large deviation principle with good 
rate function $H(v)$. That is, for any Borel subset $\Gamma \subset \R^d$, 
with $\Gamma^\circ$ denoting its interior  and
$\overline{\Gamma}$ its closure,
\[
-\inf_{v\in{\Gamma^\circ} } H(v) \leq \liminf_{n\ra\infty} 
\frac{1}{n} \log \P \left( \frac{X_n}{n} \in \Gamma \right) 
\leq \limsup_{n\ra\infty} \frac{1}{n} \log \P\left( \frac{X_n}{n}
\in \Gamma \right) \leq -\inf_{v\in\overline{\Gamma}} H(v).
\]
Moreover, the zero set of the rate function 
$Z:= \{ v: H(v)=0\}$ is a single point if $P$ is non-nestling and 
a line segment containing the origin if $P$ is nestling.
\end{thm}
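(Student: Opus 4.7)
My approach would be to exploit the i.i.d.\ regeneration structure of a transient RWRE to reduce the annealed LDP to Cram\'er's theorem for sums of i.i.d.\ vectors, and then read off the zero set from the law of large numbers together with slowdown estimates. The first step is to invoke a Sznitman--Zerner type regeneration construction in a direction $\ell$ of transience (available in the ballistic regime under Assumptions~\ref{asmUE} and~\ref{asmIID}). This produces random times $0=\tau_0<\tau_1<\tau_2<\cdots$ such that the increments $(\tau_{k+1}-\tau_k,\,X_{\tau_{k+1}}-X_{\tau_k})$ are i.i.d.\ under $\P$ for $k\ge 1$. Cram\'er's theorem then yields a joint LDP for $(\tau_n,X_{\tau_n})/n$ with rate function $I(t,x)=\sup_{\a,\b}\{\a t+\b\cdot x-\L(\a,\b)\}$, where $\L$ is the log moment generating function of one regeneration increment; applying the contraction principle to the map $(t,x)\mapsto x/t$ then produces the candidate convex good rate function
\[
H(v)=\inf_{t>0} t\,I(1/t,\,v/t).
\]

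The second step is to transfer this LDP from regeneration times to the clock $n$. For the upper bound, I would compare $X_n$ with $X_{\tau_{k(n)}}$, where $k(n)$ is the number of regenerations completed by time $n$, controlling the error by exponential tails on the regeneration increments. For the lower bound, I would paste together independent regeneration slabs on essentially disjoint regions of $\Z^d$, paying only an $o(n)$ correction thanks to the product structure of $P$. Convexity of $H$ follows from the concatenation construction, goodness is automatic from the Legendre-dual form, and finiteness on the unit ball $\|v\|_1\le 1$ follows from the nearest-neighbor constraint on $X_n$. The main technical issue here is control of the first regeneration time $\tau_1$, whose tails can be heavy in the marginally nestling setting, together with the extension of the LDP to velocities not directly accessible from a single regeneration block (which may require a supplementary subadditive or variational argument in the spirit of Varadhan).

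Finally, to identify the zero set $Z$, the Sznitman--Zerner LLN yields $X_n/n\to \vp$ almost surely in the ballistic regime, so $\vp\in Z$ and $H(\vp)=0$. The dichotomy between the two classes of environments is then decided by the decay rate of slowdown probabilities $\P(\|X_n\|\le n\e)$: one class yields exponential decay, forcing $H(0)>0$ and hence, by convexity, $Z=\{\vp\}$, while the other admits only subexponential (polynomial) decay, which by convexity pins $H\equiv 0$ on the entire segment from the origin to $\vp$ and gives a nontrivial line segment. I anticipate the main obstacle of the whole argument to be this quantitative slowdown estimate: in the subexponential case one must construct a rare ``trap'' pocket of atypical environment of size $r$, occurring with probability of order $\exp(-Cr^d)$ and costing the walk time of order $\exp(cr)$, and then optimize over $r$ to obtain the correct (subexponential) slowdown rate, while in the opposing case one must upgrade the uniform sign condition on the drift to an exponential lower bound on $H(0)$.
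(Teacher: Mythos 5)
The theorem you are asked to prove is Varadhan's general annealed LDP, which the paper \emph{cites} from \cite{vLDP} rather than proves; its hypotheses are only Assumptions~\ref{asmUE} and~\ref{asmIID}, with \emph{no} ballisticity, transience, or condition~\textbf{T}. Your approach starts by ``invoking a Sznitman--Zerner type regeneration construction in a direction $\ell$ of transience (available in the ballistic regime).'' That is the fatal gap: under Assumptions~\ref{asmUE} and~\ref{asmIID} alone there need not be any direction of transience at all (for instance a symmetrically perturbed simple random walk), so the regeneration times $\tau_k$ simply do not exist and the whole Cram\'er/contraction scheme has no starting point. Varadhan's actual argument is built on the point of view of the particle and a subadditivity argument for quenched passage probabilities, and applies uniformly with no transience assumption; your route cannot recover that generality. (As a side remark, you have the dichotomy the right way round --- single point for non-nestling, segment $[0,\vp]$ for nestling --- the displayed statement in the paper has a typo swapping these; compare with the sentence immediately following \eqref{LLN}.)

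Even in the ballistic setting where the regeneration structure does exist, two further steps of your proposal break down. First, ``controlling the error by exponential tails on the regeneration increments'' is not available in the nestling case: $\tau_1$ has only stretched-exponential (indeed $\exp(-c(\log n)^d)$) tails, which is precisely what makes $H$ vanish on a whole segment, and what forces the paper (Section~\ref{NestlingUpperbound}) to control regeneration \emph{distances} rather than times. Second, transferring the LDP from $(X_{\tau_k},\tau_k)/k$ to $X_n/n$ is the central difficulty, and it is open in general. The paper proves the matching upper bound only in a neighborhood of the zero set (Propositions~\ref{Thesis_ldpub} and~\ref{nestldpub}), and explicitly poses as Question~\ref{halfplaneconjecture} whether $\bar J = H$ on all of $H_\ell$. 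Your candidate $H(v)=\inf_{t>0}t\,I(1/t,v/t)$ is exactly the paper's $\bar J$, so you are in effect asserting an affirmative answer to that open question. Your outline is therefore a reasonable sketch of a \emph{local} LDP under condition~\textbf{T} --- which is what this paper actually establishes --- but it is not a proof of the theorem as stated.
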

\noindent\textbf{Remark:} 
As shown in \cite{vLDP},
the conclusion of Theorem \ref{annealedLDP} holds 
more generally
for RWRE with bounded jumps in i.i.d. environments with 
certain strong uniform ellipticity conditions. 

Until recently, other than this description of the zero set of $H$ 
and the fact that $H$ is convex, no other qualitative properties of
the annealed rate function were known. 
In contrast, much more is known about the qualitative behavior 
of the annealed rate function when $d=1$. 
In \cite{cgzLDP}, a rather detailed description of
the one-dimensional annealed rate function was given. 
In particular, intervals were identified on which the annealed rate 
function is strictly convex, and sufficient conditions 
were given for the annealed rate function to have linear pieces
in a neighborhood of the origin.

Recently, Yilmaz \cite{yThesis}, \cite{yPaper}
has made some progress on the understanding
of the annealed rate function for multi-dimensional RWRE and on
the distribution of paths leading to large deviations. 
He has shown that under certain conditions 
on the environment, there exist regions where the annealed 
rate function is strictly convex and analytic. 
In this paper, we provide a different
proof of these results, and also provide further 
information about the annealed rate function when 
the environment is nestling. 
In particular, we show in the latter case
that there is an open set which has the origin 
in its boundary and on which the annealed 
rate function is analytic and $1$-homogeneous 
(that is, $H(c v) = c H(v)$ if $v$ and $cv$ are both in the open set). 

Our approach to analyzing the annealed large deviations of
multi-dimensional RWRE utilizes what are known as \emph{regeneration times}.
Recall that for $\ell \in S^{d-1}:= \{ \xi \in \R^{d}: 
\| \xi \| = 1 \}$ such that $c\ell \in \Z^d$ for some $c>0$, 
regeneration times in the direction $\ell$ may be defined by
\[
\tau_1 := \inf \{ n > 0 : X_k \cdot \ell < X_n 
\cdot \ell \leq X_m \cdot \ell , \quad \forall k<n, \quad \forall m\geq n \},
\]
and
\[
\tau_i := \inf \{ n > \tau_{i-1} : X_k \cdot \ell <
X_n \cdot \ell \leq X_m \cdot \ell , 
\quad \forall k<n, \quad \forall m\geq n \}, \quad\mbox{for } i > 1.
\]
Our final assumption is what is known as Sznitman's condition \textbf{T}.
To introduce it, define the event {\it escape to $+\infty$ in direction
$\ell$}: $A_\ell := 
\{ \lim_{n\ra\infty} X_n \cdot \ell = + \infty \}$.
\begin{asm}[Condition \textbf{T}]\label{asmT}
 Let $\ell \in S^{d-1}$ be such that 
 $c \ell \in \Z^d$ for some $c>0$, and such that the following hold.
 Either $P$ 
is non-nestling in direction $\ell$,  or $P$ is nestling and
\begin{itemize}
 \item[(i) ] $\P( A_\ell) = 1$. 
 \item[(ii) ] There exists a constant $C_1>0$ such that 
\[
 \E \exp \left\{ C_1 \sup_{0\leq n\leq \tau_1} \| X_n \| \right\} < \infty,
\]
where $\tau_1$ is the first regeneration time in direction $\ell$. 
\end{itemize}
\end{asm}
\noindent
\textbf{Remarks:} \textbf{1.} When $P$ is non-nestling in direction
$\ell$, 
it is straightforward to check using results of 
Sznitman \cite{sSlowdown} that (i) and (ii) above hold. See
Section \ref{nnbJprop} for more information.\\ 
\textbf{2.}
We require $c \ell \in \Z^d$ for some $c>0$ 
in order to allow for a simpler definition of regeneration times
that agrees with the one given by Sznitman and Zerner \cite{szLLN} 
(set $a = \frac{1}{c}$ in the definition of regeneration times 
in \cite{szLLN}). This restriction is not essential, as
\cite[Theorem 2.2]{sConditionT} implies that Assumption \ref{asmT} is
equivalent to the version of condition \textbf{T} given
in \cite{sConditionT} which does not require that $c \ell \in \Z^d$.

When $P$ is non-nestling or $d\geq 2$, 
Assumptions \ref{asmUE}, \ref{asmIID} and \ref{asmT} imply a 
law of large numbers with non-zero limiting velocity (see \cite{sConditionT}). 
That is, there exists a point $\vp\in \R^d\backslash 
\{ \mathbf{0}\}$ such that
\begin{equation}\label{LLN}
\lim_{n\ra\infty} \frac{X_n}{n} =: \vp, \quad \P-a.s. 
\end{equation}
Varadhan's description of the zero set of the annealed rate function in 
Theorem \cite{vLDP} implies that under 
Assumptions \ref{asmUE}, \ref{asmIID} and \ref{asmT},
if $P$ is non-nestling, 
then $\vp$ is the unique zero of the annealed rate function, and if 
$P$ is nestling then $[0, \vp]$ is the zero set of the annealed rate function.
Our main results are the following:
\begin{thm}\label{Thesis_LDPdiff}
Let Assumptions \ref{asmUE} and \ref{asmIID} hold and let $P$ be non-nestling. 
Then, the annealed rate function $H(v)$ is analytic and 
strictly convex in a neighborhood $\mathcal{A}'$ of $\vp$. 
\end{thm}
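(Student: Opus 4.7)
The plan is to exploit the i.i.d.\ structure of the regeneration slabs
$(X_{\tau_{i+1}}-X_{\tau_i},\,\tau_{i+1}-\tau_i)_{i\geq 1}$ to identify the
annealed logarithmic moment generating function of $X_n$, and then obtain
$H$ by Legendre--Fenchel duality. Because $P$ is non-nestling, it is
non-nestling in some direction $\ell$ (by a hyperplane separation
argument), so Assumption \ref{asmT} is automatic and the slabs enjoy joint
exponential moments in a full neighborhood of the origin, which will make
every object below analytic.

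I would first introduce
\[
\phi(\theta,\lambda):= \E\!\left[e^{\theta\cdot(X_{\tau_2}-X_{\tau_1})-\lambda(\tau_2-\tau_1)}\right],
\]
which is finite and jointly analytic on an open neighborhood of
$(0,0)\in\R^{d+1}$, with $\phi(0,0)=1$ and
$\partial_\lambda\phi(0,0)=-\E[\tau_2-\tau_1]<0$. The implicit function
theorem produces an analytic $\lambda^*$ on a neighborhood $V$ of
$0\in\R^d$ solving $\phi(\theta,\lambda^*(\theta))=1$ with
$\lambda^*(0)=0$. The main substantive step is to prove
\[
\lambda^*(\theta)=\lim_{n\to\infty}\frac{1}{n}\log\E\!\left[e^{\theta\cdot X_n}\right],\qquad \theta\in V.
\]
This is a renewal-type argument: decomposing $X_n$ along regeneration
times, applying the i.i.d.\ slab structure, and using the constraint
$\phi(\theta,\lambda^*(\theta))=1$ to normalize the exponentially tilted
slab law identifies the rate as $\lambda^*(\theta)$, while uniform
exponential moments bound the contribution of the first slab and of the
partial slab after the last regeneration before time $n$ by $o(n)$.

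With $\lambda^*$ identified as the annealed log-MGF, G\"artner--Ellis
combined with the uniqueness in Theorem \ref{annealedLDP} gives that
$H$ coincides with the Legendre dual $\lambda^{**}$ on
$\nabla\lambda^*(V)$. Since
$\nabla\lambda^*(0)=\E[X_{\tau_2}-X_{\tau_1}]/\E[\tau_2-\tau_1]=\vp$ by
\eqref{LLN}, this does cover a neighborhood of $\vp$. Analyticity of $H$
and strict convexity will follow from analyticity of $\lambda^*$ together
with positive-definiteness of $\nabla^2\lambda^*$ on $V$. To verify the
latter at $\theta=0$, differentiate $\phi(\theta,\lambda^*(\theta))=1$
twice to express $\nabla^2\lambda^*(0)$ in terms of the covariance of
$(X_{\tau_2}-X_{\tau_1},\tau_2-\tau_1)$; uniform ellipticity (Assumption
\ref{asmUE}) rules out $X_{\tau_2}-X_{\tau_1}$ being concentrated on any
affine hyperplane, yielding positive definiteness, which persists on a
smaller neighborhood $V'\subset V$ by continuity. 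Legendre duality then
delivers analyticity and strict convexity of $H$ on
$\mathcal{A}':=\nabla\lambda^*(V')$.

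The main obstacle I expect is the rigorous identification of $\lambda^*$
as the annealed log-MGF. The clean i.i.d.\ structure only begins with the
second slab, so showing that the first slab and the terminal partial slab
contribute only $o(n)$ to the exponent requires uniform exponential
moment control under the exponentially tilted slab measure throughout a
full neighborhood of the critical tilt. In the non-nestling regime this
is tractable because Assumption \ref{asmT}(ii) holds automatically with
constants stable under small tilts, but it is the technical heart of the
argument and dictates the geometry of $\mathcal{A}'$.
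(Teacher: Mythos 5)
Your approach is genuinely different from the paper's, though both exploit the i.i.d.\ slab structure. The paper works on the \emph{rate-function side}: it introduces the Cram\'er rate function $\bar{I}$ for the pairs $(X_{\tau_k}/k,\tau_k/k)$, contracts it to define $\bar{J}(v)=\inf_{0<s\leq 1}s\bar{I}(v/s,1/s)$, proves local LDP upper and lower bounds for $X_n/n$ with rate $\bar{J}$ directly (Propositions \ref{Thesis_ldplb} and \ref{Thesis_ldpub}), and only then invokes Theorem \ref{annealedLDP} to conclude $\bar{J}=H$ on $\mathcal{A}'$. You work on the \emph{dual side}: identify the annealed log-MGF as the solution $\lambda^*(\theta)$ of the renewal equation $\phi(\theta,\lambda^*(\theta))=1$, invoke Varadhan's lemma (which is what you really want rather than G\"artner--Ellis, since Theorem \ref{annealedLDP} already gives the LDP and Varadhan's lemma then yields $\lim_n n^{-1}\log\E e^{\theta\cdot X_n}=H^*(\theta)$) to conclude $H^*=\lambda^*$ on $V$, and pass through Legendre duality. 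This is essentially Yilmaz's route \cite{yThesis,yPaper}, which the paper explicitly acknowledges and contrasts with its own: your route relies on Theorem \ref{annealedLDP} at the outset to identify the log-MGF, whereas the paper's route produces a standalone local LDP with an explicit rate function $\bar{J}$ and uses Theorem \ref{annealedLDP} only at the end to match names. Your $\lambda^*(\theta)$ is exactly $-\l(\theta)$ in the paper's notation (since $\phi(\theta,\l)=e^{\bL(\theta,-\l)}$), so the geometric content is the same; the paper's $\bar{J}$ is the Legendre transform of your $\lambda^*$ restricted appropriately. Both approaches confront the identical technical heart --- controlling the first slab and the terminal partial slab, which you flag correctly; the paper handles it via the exponential tail \eqref{tau1exptail} in the proof of Proposition \ref{Thesis_ldpub}. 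One gap to close in your writeup: the step from ``$H^*=\lambda^*$ on a neighborhood $V$ of the origin'' to ``$H=(\lambda^*)^*$ on $\nabla\lambda^*(V')$'' needs the convex-duality fact that if the convex conjugate $H^*$ is differentiable at $\theta$ with $\nabla H^*(\theta)=v$, then $H(v)=\theta\cdot v-H^*(\theta)$; this is \cite[Theorem 26.3]{rConvex}, which the paper also cites, and you should cite it rather than asserting the transfer directly. With that patch, your outline is sound; the remaining work is to make rigorous the renewal identification $\lambda^*(\theta)=\lim_n n^{-1}\log\E e^{\theta\cdot X_n}$, which you have correctly singled out as the crux.
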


\begin{thm}\label{nestLDPdiff}
Let Assumptions \ref{asmUE}, \ref{asmIID}, and \ref{asmT} hold, 
let $P$ be nestling, and let $d\geq 2$. Then, there exists an open set 
$\mathcal{A}$ with the following properties:
\begin{enumerate}
 \item The half open interval $(0, \vp] \subset \mathcal{A}$. 
 \item $\mathcal{A}$ can be written as the disjoint union $\mathcal{A}
	 = \mathcal{A}^+ \cup \mathcal{A}^0 \cup \mathcal{A}^-$, where
	 $\mathcal{A}^+$ is open, 
	 $\mathcal{A}^0 \subset \del \mathcal{A}^+$ is a $d-1$ dimensional set
	 with non-empty (relative)
	 interior containing $\vp$, and $\mathcal{A}^- =
	 \{ c v: c\in(0,1), v \in \mathcal{A}^0 \}$. 
 \item The annealed rate function $H(v)$
	 is strictly convex and analytic on $\mathcal{A}^+$. 
 \item The annealed rate function $H(v)$ 
	 is analytic and $1$-homogeneous on $\mathcal{A}^-$.
 \item The annealed rate function $H(v)$ is 
	 continuously differentiable on $\mathcal{A}$.
\end{enumerate}
\end{thm}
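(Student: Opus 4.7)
The plan is to reduce the large deviation problem for $X_n$ to one for the regeneration partial sums and then carry out a convex-analytic study of the resulting variational formula. Under Assumption \ref{asmT}, the slabs $(X_{\tau_{n+1}}-X_{\tau_n},\,\tau_{n+1}-\tau_n)_{n\geq 1}$ are \iid under $\P(\cdot\mid A_\ell)$, and Condition \textbf{T} together with Assumption \ref{asmUE} yields exponential moments for $\|X_{\tau_1}\|$ and $\tau_1$. I would therefore begin by introducing the joint log moment generating function
$$\Lambda(\theta,\phi) \;=\; \log\E\!\left[e^{\theta\cdot X_{\tau_1}+\phi\tau_1}\,\big|\,A_\ell\right],$$
verifying that it is convex and real-analytic on the interior of its effective domain $\D_\Lambda\subset\R^{d+1}$, and that $\D_\Lambda$ contains a neighborhood of the origin. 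Cram\'er's theorem then gives a joint LDP for $\tfrac{1}{n}(X_{\tau_n},\tau_n)$ with rate function $\tilde J=\Lambda^*$. A standard renewal argument, using $X_n\approx X_{\tau_{k_n}}$ for $\tau_{k_n}\leq n<\tau_{k_n+1}$ together with the exponential moments of a single slab, transfers this to the contraction formula
$$H(v) \;=\; \inf_{t>0}\,\frac{1}{t}\,\tilde J(tv,t), \qquad v\in\R^d\setminus\{\mathbf{0}\}.$$

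To construct $\mathcal{A}^+$ and obtain item (3), I would show that for $v$ in a neighborhood of $\vp$ the infimum is attained at a unique interior $t^*(v)\in(0,\infty)$. The first-order condition $\del_t[\tfrac{1}{t}\tilde J(tv,t)]=0$ is non-degenerate because $\Lambda$ is strictly convex in the interior of $\D_\Lambda$, so the implicit function theorem produces an analytic map $v\mapsto t^*(v)$, and strict convexity and analyticity of $H$ follow by composition with $\tilde J$. The maximal open set on which this argument applies is declared to be $\mathcal{A}^+$, and $\vp$ itself lies in its closure by \eqref{LLN}.

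The nestling region $\mathcal{A}^-$ is driven by the existence of trap-like neighborhoods in which the local drift is near zero. Because $P$ is nestling and $d\geq 2$, I expect a boundary face of $\D_\Lambda$ along which $\Lambda$ remains bounded while $\phi$ reaches its critical value, so that the optimizer in the contraction migrates to $t=+\infty$. Equivalently, the optimal \emph{trap-and-travel} strategy for a slow target $cv$ with $c\in(0,1)$ and $v\in\mathcal{A}^0\subset\del\mathcal{A}^+$ is to move at velocity $v$ for a $c$-fraction of time (cost $cH(v)$) and then sit in a trap of near-zero drift for the remaining $(1-c)n$ steps (cost $o(n)$). The upper bound would be implemented by an explicit trap construction exploiting nestling and Assumption \ref{asmUE}; the lower bound would come from a variational argument bounding how much regeneration slabs can be stretched while the spatial displacement remains proportional to $v$. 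This yields $H(cv)=cH(v)$, hence analyticity and $1$-homogeneity on the open cone $\mathcal{A}^-=\{cv:c\in(0,1),v\in\mathcal{A}^0\}$, producing items (2) and (4).

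The hardest step will be the continuous differentiability in (5) across the interface $\mathcal{A}^0$. On $\mathcal{A}^+$ one has $\nabla H(v)=\theta^*(v)$, the $\R^d$-component of the interior dual maximizer, whereas $1$-homogeneity forces $\nabla H$ to be constant along each ray in $\mathcal{A}^-$, equal to $\theta^*(v^+)$ where $v^+\in\mathcal{A}^0$ is the radial projection. Matching the two gradients as $v\in\mathcal{A}^+$ approaches $\mathcal{A}^0$ amounts to showing that the interior maximizer $(\theta^*(v),\phi^*(v))$ converges to a specific point on the appropriate boundary face of $\D_\Lambda$, and that no limit drifts off to infinity. I would attack this passage to the boundary with quantitative convexity estimates for $\Lambda$ away from its singular face, combined with the trap construction from the previous paragraph to identify the limiting boundary optimizer; this matching at the boundary is where I expect the bulk of the technical work to reside.
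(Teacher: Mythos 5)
Your overall framework — computing the joint log-mgf of a regeneration slab, forming the Cram\'er rate function, and expressing $H$ as a contraction over the number of regenerations — matches the paper's; the contraction $\inf_{t>0}\frac{1}{t}\tilde J(tv,t)$ is exactly the paper's $\bar{J}(v)=\inf_{0<s\leq 1}s\bar{I}(v/s,1/s)$ after the substitution $t=1/s$. However, there is a concrete error that cascades: you assert that Condition \textbf{T} and uniform ellipticity give exponential moments for $\tau_1$. In the nestling case this is false. When $P$ is nestling, a trap construction (cf.\ Lemma \ref{bLnest}, which follows from the slowdown estimates of \cite{sSlowdown}) shows $\bE e^{\l\tau_1}=\infty$ for every $\l>0$; only polynomial moments of $\tau_1$ and exponential moments of $\sup_{n\leq\tau_1}\|X_n\|$ (from Assumption \ref{asmT}(ii)) are available. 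This matters because the ``standard renewal argument'' you invoke to pass from the slab LDP to the LDP for $X_n/n$ is precisely what fails here: without exponential control on $\tau_1$, one cannot rule out a few very long regeneration blocks carrying the cost. The paper circumvents this in Proposition \ref{nestldpub} by decomposing on regeneration \emph{distances} rather than times, using the exponential tail of $S_0=\sup_{n<\tau_1}\|X_n\|$ from \ref{asmT}(ii) and Lemma \ref{nestchcv}, and then showing via convexity ($\|\nabla\bar{J}\|<C_1$ on $\mathcal{A}$) that contributions with a large first-slab displacement or a large overshoot are subleading. You would need to replace your renewal argument with something of this kind; as written, the upper bound does not go through.

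Two further points on the parts you sketch. For $\mathcal{A}^-$ and $1$-homogeneity, your intuition (boundary face of $\D_\Lambda$ at $\phi=0$, trap-and-travel) is correct, but the paper executes it more cleanly by introducing the reduced rate function $\bar{I}_1(x)=\sup_\eta[\eta\cdot x-\bL_X(\eta)]$ for positions alone and the manifestly $1$-homogeneous $\bar{J}_1(v)=\inf_{s>0}s\bar{I}_1(v/s)$, then proving $\bar{I}(x,t)=\bar{I}_1(x)$ for $t\geq h(\eta)$ (Lemma \ref{Iconstant}) and deducing $\bar{J}=\bar{J}_1$ on $\mathcal{A}^0\cup\mathcal{A}^-$. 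Also note that the optimizer does \emph{not} migrate to $t=+\infty$ for fixed $c\in(0,1)$: for $v_0=\gamma(\eta_0)\in\mathcal{A}^0$ the minimizing $s$ for $\bar{J}(cv_0)$ is $cs_0$, finite, and it is only the dual variable $\l$ that sits at the boundary value $0$. Finally, you identify continuity of $\nabla H$ across $\mathcal{A}^0$ as the hardest step, but the paper obtains it almost for free (Corollary \ref{bJnest3}): $\nabla\bar{J}(\gamma(\eta))=\eta$ holds on \emph{both} sides by Lemmas \ref{bJnest} and \ref{bJnest2}, so continuity reduces to continuity and injectivity of $\gamma$. The genuinely hard part is the LDP upper bound, not the matching of gradients, and you would also need to supply the geometric fact (via Yilmaz's normal-vector argument) that $\mathcal{A}^+$, $\mathcal{A}^0$, $\mathcal{A}^-$ are disjoint and that $\mathcal{A}^-$ is open.
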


\begin{figure}
\centering
\includegraphics{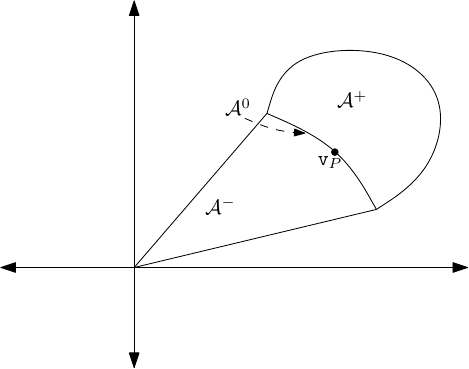}  
\caption{Theorem \ref{nestLDPdiff} describes the large deviation rate function on the set $\mathcal{A} = \mathcal{A}^+ \cup \mathcal{A}^0 \cup \mathcal{A}^-$ in the nestling case. The rate function is strictly convex and analytic on $\mathcal{A}^+$ and analytic and $1$-homogeneous on $\mathcal{A}^-$. The limiting velocity of the random walk $\vp \in \mathcal{A}^0$. }
\end{figure}

\noindent\textbf{Remarks:}  \textbf{1.}
Theorem 
\ref{Thesis_LDPdiff} was proved in \cite{MyThesis}. Independently,
Yilmaz proved in his thesis \cite{yThesis}
Theorem \ref{Thesis_LDPdiff} and part 3 of Theorem 
\ref{nestLDPdiff}. (Although stated under Kalikow's condition, which
is stronger than Assumption \ref{asmT}, his proof carries over
verbatim to the case where only Assumption \ref{asmT} holds.)
While Yilmaz's
proof also uses regeneration times, his approach is different in that
he does not introduce the rate function associated with
regeneration times and distances. (He also
depends on Theorem \ref{annealedLDP}, although this dependence can probably
be eliminated.) 
In contrast, our approach develops a
new proof of (a local version of) the
annealed large deviations,
independent
of Theorem \ref{annealedLDP}.
This alternative approach also allows one to make explicit an alternative
description of the rate function,
which in turn allowed us to deduce additional properties
of the annealed rate funtion\footnote{A. Yilmaz has kindly indicated to us 
how the representation in \cite{yPaper} can also 
be used to recover these additional properties; see
\cite[Proof of Theorem 4]{atarx} for details.}.\\
\textbf{2.} Theorem \ref{nestLDPdiff} still holds when $d=1$
(with ${\cal A}^0=\{ \vp \}$). 
This follows from the fact that $H(v) = 0$ for all $v \in [0,\vp]$ 
and the recent results of Yilmaz \cite{yThesis}, \cite{yPaper} which 
show that $H(v)$ is strictly convex and analytic on an open set bordering 
$\vp$. Our proof of Theorem \ref{nestLDPdiff} can be easily modified to also
cover the $d=1$ case, but for simplicity we will restrict ourselves in this 
paper to $d\geq 2$.

The structure of this paper is as follows: In Section \ref{regtimes}
we use the large deviations of regeneration times and distances
to define a new
function $\bar{J}(v)$. Most of Section \ref{regtimes} is devoted 
to proving qualitative properties of the function $\bar{J}(v)$. 
Section \ref{LDPlowerbound} provides an easy large deviation lower
bound with rate function $\bar{J}(v)$ for both nestling and non-nestling RWRE.
Then, in Section \ref{Thesis_upperbound} we derive matching large
deviation upper bounds in a neighborhood of $\vp$ when $P$ is
non-nestling 
and in the set $\mathcal{A}$ (which is defined in Section \ref{regtimes}) when $P$ is nestling. 
The proofs of Theorems \ref{Thesis_LDPdiff} and \ref{nestLDPdiff} are
then completed by noting that the large deviation upper and lower bounds 
proved in Sections \ref{LDPlowerbound} and \ref{Thesis_upperbound} imply 
that $\bar{J}(v) = H(v)$ for $v$ in appropriate subsets of
$\mathbb{R}^d$, and thus on these subsets,
the annealed rate
function $H(v)$ has the same properties 
that were proved for
$\bar{J}(v)$ in Section \ref{regtimes}. 
The Appendix contains a technical lemma on the analyticity of 
Legendre transforms that is used in Section \ref{regtimes}. 
\end{section}

\begin{section}{Regeneration Times and the 
	Rate Function $\bar{J}$}\label{regtimes}

	For the remainder of the paper, Assumptions \ref{asmUE},
	\ref{asmIID} and \ref{asmT} (with respect to a fixed direction
	$\ell$) will hold. 
Additionally, if $P$ is nestling, we will assume that $d\geq 2$. 
The regeneration times $\tau_i$ are obviously not stopping times
since they depend on the future of the random walk. They do however
introduce an i.i.d. structure, described next. Let 
$D:= \{ X_n \cdot \ell \geq 0, \; \forall n \geq 0 \}$. 
When $\P(D)>0$, let $\bP$ be the annealed law of the RWRE conditioned 
on the event $D$ (i.e., $\bP(\,\cdot \,) :=\P( \cdot \, | D )$). 
Expectations under the measure $\bP$ will be denoted  by $\bE$. 
\begin{thm}[Sznitman and Zerner \cite{szLLN}]\label{Thesis_regindep}
 Assume $\P(A_\ell) = 1$, and let $\tau_i$ be the regeneration times 
 in direction $\ell$. Then $\P(D)> 0$, and 
\[
 (X_{\tau_1}, \tau_1), (X_{\tau_2} -X_{\tau_1}, \tau_2 - \tau_1), 
 \ldots ,(X_{\tau_{k+1}} -X_{\tau_k}, \tau_{k+1} - \tau_k), \ldots
\]
are independent random variables. Moreover, the above sequence is 
i.i.d. under $\bP$.  
\end{thm}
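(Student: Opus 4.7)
The plan is to build $\tau_1$ as the first of a sequence of ladder (record) times after which the walk never returns below its current height in direction $\ell$, and then to exploit the product structure of the environment together with the slab decoupling inherent in a regeneration to obtain the independence and i.i.d.\ properties.

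To establish $\P(D)>0$, I would combine Assumption \ref{asmUE} with $\P(A_\ell)=1$. By uniform ellipticity, from the origin the quenched probability of reaching $\{x\cdot\ell>M\}$ without first returning below level $0$ in direction $\ell$ is bounded below by some $c(M)>0$ uniformly in the environment. Since $X_n\cdot\ell\to+\infty$ $\P$-a.s., by choosing $M$ large and using translation invariance of $P$ to control the residual probability of ever going below $0$ after reaching that height, one obtains $\P(D)>0$.

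For the construction of $\tau_1$, define the ladder epochs $V_0=0$ and $V_{k+1}=\inf\{n>V_k : X_n\cdot\ell>\max_{j\leq V_k}X_j\cdot\ell\}$, all finite $\P$-a.s.\ by $\P(A_\ell)=1$. Set $\tau_1=V_K$ with $K=\inf\{k\geq 1 : X_n\cdot\ell\geq X_{V_k}\cdot\ell\text{ for all }n\geq V_k\}$. The quenched strong Markov property at each $V_k$ shows that, given the past, the probability of the non-return event equals $P_{\theta^{X_{V_k}}\omega}(D)$, where $\theta^x\omega$ denotes the environment shifted by $x$. Since the environment is i.i.d.\ and $P(P_\omega(D)>0)=1$ (a consequence of $\P(D)>0$), these are positive i.i.d.\ random variables along the ladder, and a Borel--Cantelli argument forces $K<\infty$ almost surely, hence $\tau_1<\infty$ almost surely.

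The heart of the theorem, and the main obstacle, is the independence claim, complicated by the fact that $(\tau_1,X_{\tau_1})$ is not a stopping time. I would handle this by decomposing over the events $\{\tau_1=V_k,\,X_{V_k}=x\}$: each such event is the intersection of a past event, measurable with respect to the trajectory up to $V_k$ together with the environment on $\{y:y\cdot\ell<x\cdot\ell\}$, and the tail event $D\circ\theta_{V_k}$, measurable with respect to the post-$V_k$ trajectory and the environment on $\{y:y\cdot\ell\geq x\cdot\ell\}$. By Assumption \ref{asmIID}, these two environment $\sigma$-algebras are independent, so conditionally on the past the shifted process $(X_{\tau_1+n}-X_{\tau_1})_{n\geq 0}$ has the law of the walk started at the origin and conditioned on $D$, namely $\bP$. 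Iterating this renewal-style decomposition to $\tau_2-\tau_1$ and beyond yields the asserted independence; and since under $\bP$ the walk is required from the outset to satisfy $D$, the first pair $(X_{\tau_1},\tau_1)$ then shares the common law of the subsequent increments, giving the full i.i.d.\ property.
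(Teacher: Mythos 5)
The paper does not prove this theorem; it cites Sznitman and Zerner \cite{szLLN}, so there is no in-paper argument to compare against. Your sketch is essentially the Sznitman--Zerner route: ladder/record epochs, slab decoupling coming from the product structure of $P$, and the decomposition of $\{\tau_1=V_k,\,X_{V_k}=x\}$ into a pre-$V_k$ event (measurable with respect to the path up to $V_k$ and $\sigma(\omega_y:y\cdot\ell<x\cdot\ell)$) intersected with $D\circ\theta_{V_k}$ (measurable with respect to the post-$V_k$ path and $\sigma(\omega_y:y\cdot\ell\geq x\cdot\ell)$). That factorization is the heart of the matter and you state it correctly, including the observation that once one intersects with $D\circ\theta_{V_k}$ the condition ``no earlier record is a regeneration'' becomes determined by the path up to $V_k$. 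Iterating at $\tau_2,\tau_3,\dots$ and noting that the post-$\tau_1$ law is $\bP$ gives both the independence under $\P$ and the i.i.d.\ property under $\bP$.

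A few remarks on precision. Your argument for $\P(D)>0$ does work, and is in fact simpler than the one in \cite{szLLN}, because the paper assumes the stronger hypothesis $\P(A_\ell)=1$ (Sznitman--Zerner only assume $\P(A_\ell)>0$, where the argument becomes a proof by contradiction using a martingale/ergodic estimate). The reason your version closes is that the straight-path probability $\prod_{j<cM}\w(je,e)\geq\kappa^{cM}$ is a pointwise (quenched) lower bound, so it factors out of the annealed expectation and one is left with $\P(\inf_n X_n\cdot\ell>-M')$, which is positive by $\P(A_\ell)=1$; it would be worth saying this explicitly to dispel any appearance of circularity. Two imprecisions in the $\tau_1<\infty$ step: the random variables $P_{\theta^{X_{V_k}}\omega}(D)$ along the ladder are \emph{not} i.i.d.\ (they are functions of nested half-space environments); what is true, and what drives the conclusion, is that the conditional probability of $D\circ\theta_{V_k}$ given the path up to $V_k$ and the environment below level $X_{V_k}\cdot\ell$ is identically $\P(D)$, yielding the geometric bound $\P(K>k)\leq(1-\P(D))^k$ on $A_\ell$. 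Relatedly, the assertion that $\P(D)>0$ implies $P(P_\omega(D)>0)=1$ is not an immediate consequence (the event $\{P_\omega(D)>0\}$ is not obviously a $0$--$1$ event), and it is not needed for the argument.
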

\noindent\textbf{Remark:} 
The assumption that $\P(A_\ell)=1$ in Theorem \ref{Thesis_regindep} is 
only needed to ensure that $\tau_1 < \infty$. In fact, what is shown in 
\cite{szLLN} is that $\P(A_\ell ) > 0$ implies that $\P(D)>0$ and that 
$(X_{\tau_1}, \tau_1), (X_{\tau_2} -X_{\tau_1}, \tau_2 - \tau_1), \ldots$ 
are i.i.d. under $\bP$. 

Since Assumption \ref{asmT} requires that $\P(A_\ell)=1$, 
the conclusion of Theorem \ref{Thesis_regindep} is valid for 
the regeneration times $\tau_i$ in direction $\ell$. 
A consequence of Theorem \ref{Thesis_regindep} is the following 
useful formula for the limiting velocity $\vp$:
\begin{equation}
 \vp = \lim_{n\ra\infty} \frac{X_n}{n} = \frac{ \bE X_{\tau_1}}{\bE \tau_1}, 
 \qquad \P-a.s. \label{vPregenformula}
\end{equation}
Since either $P$ is non-nestling or $d\geq 2$ and
condition {\bf T} was assumed, 
it is known that 
$\bE \tau_1 < \infty $ and thus
$\vp \cdot \ell > 0$.
In fact, $P$ non-nestling or $d\geq 2$ 
imply that $\bE \tau_1^\gamma < \infty$ for 
all $\gamma < \infty$ (see \cite[Theorem 2.1]{sSlowdown} and \cite[Theorem 3.4]{sConditionT}).

Under the measure $\bP$,  $(X_{\tau_k}, \tau_k) = (X_{\tau_1}, \tau_1)
+ \sum_{i=2}^k (X_{\tau_i}-X_{\tau_{i-1}}, \tau_i-\tau_{i-1})$ is the
sum of \iid random variables. Therefore, a generalization of Cram\'er's
Theorem \cite[Theorem 6.1.3]{dzLDTA} implies that $\frac{1}{n}(X_{\tau_n},
\tau_n)$ satisfies a weak large deviation principle  under $\bP$ with
convex rate function 
\[
 \bar{I}(x,t):= \sup_{\eta\in \R^d, \; \l\in\R } \left[\left(\eta,\l\right) 
 \cdot \left(x,t\right) - \bL\left(\eta,\l\right)\right],
\]
where $\cdot$ denotes inner product and 
\[
\bL(\eta,\l):= \log \bE e^{(\eta,\l) \cdot (X_{\tau_1}, \tau_1)} 
\quad \text{ for } \eta\in\R^d, \; \l \in \R.
\]
In particular, for any open, convex subset $G\subset \R^{d+1}$,
\begin{equation}
 \lim_{k\ra\infty} \frac{1}{k} \log \bP\left( \frac{1}{k}(X_{\tau_k}, \tau_k) \in  G \right) = - \inf_{(x,t)\in G} \bar{I}(x,t). \label{Thesis_XtauLDPlb}
\end{equation}
Let $H_\ell := \{ v\in \R^d : v\cdot \ell > 0 \}$. Then, for $v\in H_\ell$, let
\[
\bar{J}(v):= \inf_{0<s\leq 1} s \bar{I}\left(\frac{v}{s},\frac{1}{s}\right).
\]
Our goal is to show that $\bar{J}(v) = H(v)$, at least for certain 
$v\in H_\ell$. The reasoning behind this is as follows.  
We assume that when $X_n \approx nv$ for some $v\in H_\ell$, 
the regeneration times occur in a somewhat regular manner
(that is, there are no extremely large regeneration times). If this
is the case, then it should be true that
\begin{equation} \label{LDapprox}
 \P(X_n \approx n v) \approx \bP(\tau_k \approx n,
 \; X_{\tau_k} \approx n v), \quad \text{ for } k=sn.
\end{equation}
However, the large deviations of $(X_{\tau_k}, \tau_k)/k$
imply that the latter probability is approximately $\exp\{ -n s
\bar{I}\left( \frac{v}{s}, \frac{1}{s} \right) \}$. The optimal $s$
for which \eqref{LDapprox} would hold must be the $s$ which minimizes
$s \bar{I}\left( \frac{v}{s}, \frac{1}{s} \right)$. 

The main difficulty in making the above heuristic argument precise 
comes in proving that there are no extremely long regeneration times 
when $X_n \approx n v$. 
In Section \ref{Thesis_upperbound} we resolve this difficulty for 
certain $v$ by showing that the least costly way to obtain a 
large deviation of $X_n \approx n v$ is to have all the regeneration 
times or distances relatively small. 

Having defined the function $\bar{J}$,
we now mention a few of basic properties. 
\begin{lem} \label{bJsimleprop}
$\bar{J}$ is a convex function on $H_\ell$, and $\bar{J}(\vp) = 0$. 
\end{lem}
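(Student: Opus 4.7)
My plan is to prove the two claims separately. Throughout, I will interpret the $\inf$ in the definition of $\bar{I}$ as a $\sup$ (the standard Cram\'er/Legendre convention, consistent with the LDP assertion \eqref{Thesis_XtauLDPlb} and with the rate function being nonnegative).

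For $\bar{J}(\vp)=0$, I start by establishing two elementary facts about $\bar{I}$. First, $\bar{I}\geq 0$: choosing $(\eta,\l)=(0,0)$ in the Legendre transform gives $0-\bL(0,0)=0$, so $\bar{I}(x,t)\geq 0$ for every $(x,t)$. Second, applying Jensen's inequality to the exponential in the definition of $\bL$ yields $\bL(\eta,\l)\geq (\eta,\l)\cdot(\bE X_{\tau_1},\bE \tau_1)$, hence $\bar{I}(\bE X_{\tau_1},\bE \tau_1)=0$. Now set $s:=1/\bE \tau_1$. Since $\tau_1\geq 1$ almost surely, $s\in(0,1]$, and by \eqref{vPregenformula} we have $(\vp/s,1/s)=(\bE X_{\tau_1},\bE\tau_1)$. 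Therefore
\[
\bar{J}(\vp)\leq s\,\bar{I}(\vp/s,1/s)=s\,\bar{I}(\bE X_{\tau_1},\bE\tau_1)=0,
\]
and the matching lower bound $\bar{J}(\vp)\geq 0$ is immediate from $\bar{I}\geq 0$.

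For convexity of $\bar{J}$, I rely on the observation that $(v,s)\mapsto s\,\bar{I}(v/s,1/s)$ is jointly convex on $\R^d\times(0,\infty)$: it is (up to the obvious restriction to the affine slice where the $(d+1)$-st coordinate of the argument equals $1$) the perspective transform of the convex function $\bar{I}$. The function $\bar{J}$ is then the partial infimum of this jointly convex function over the convex set $s\in(0,1]$, and such partial infima preserve convexity. Spelled out: for $v_1,v_2\in H_\ell$, $\theta\in(0,1)$, and any $s_1,s_2\in(0,1]$, set $s_3:=\theta s_1+(1-\theta)s_2\in(0,1]$ and $\alpha:=\theta s_1/s_3\in(0,1)$, so that
\[
\bigl((\theta v_1+(1-\theta)v_2)/s_3,\,1/s_3\bigr)=\alpha(v_1/s_1,1/s_1)+(1-\alpha)(v_2/s_2,1/s_2).
\]
Convexity of $\bar{I}$ combined with the identities $s_3\alpha=\theta s_1$ and $s_3(1-\alpha)=(1-\theta)s_2$ then yields
\[
s_3\,\bar{I}\bigl((\theta v_1+(1-\theta)v_2)/s_3,1/s_3\bigr)\leq \theta s_1\bar{I}(v_1/s_1,1/s_1)+(1-\theta)s_2\bar{I}(v_2/s_2,1/s_2).
\]
Taking the infimum over $s_1$ and then $s_2$ gives $\bar{J}(\theta v_1+(1-\theta)v_2)\leq \theta\bar{J}(v_1)+(1-\theta)\bar{J}(v_2)$.

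Neither part is deep; the only mild subtlety is checking $\bar{I}(\bE X_{\tau_1},\bE\tau_1)=0$, which just requires Jensen's inequality and $\bE\tau_1<\infty$ (guaranteed under Condition \textbf{T}). I do not anticipate any real obstacle in this lemma — the content is really just the perspective-function trick plus a one-line calculation at $\vp$.
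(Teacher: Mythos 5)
Your proof is correct, and the overall structure mirrors the paper's: joint convexity of $f(v,s):=s\,\bar I(v/s,1/s)$ followed by partial infimization over $s$, plus a separate verification that $\bar I$ vanishes at $(\bE X_{\tau_1},\bE\tau_1)$. For the convexity step the paper simply writes $f(v,s)=\sup_{\eta,\l}\,(\eta,\l)\cdot(v,1)-s\,\bL(\eta,\l)$, a supremum of affine functions of $(v,s)$; your perspective-transform argument is the same fact stated at one higher level of abstraction, and your spelled-out algebra is exactly the standard proof of that fact. The one place you deviate is in showing $\bar I(\bE X_{\tau_1},\bE\tau_1)=0$: the paper appeals to the law of large numbers together with the LDP lower bound \eqref{Thesis_XtauLDPlb}, whereas you use Jensen's inequality directly on $\bL$. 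Your route is a bit more self-contained (it does not route through the LDP), and it also makes explicit the nonnegativity $\bar I\geq 0$ via the choice $(\eta,\l)=(0,0)$, which the paper leaves implicit. You also correctly flag the typo in the paper's displayed definition of $\bar I$ ($\inf$ should be $\sup$), consistent with the paper's own use of $\sup$ inside its proof.
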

\begin{proof}
For $s\in(0,1]$ and $v\in H_\ell$, let
\[
f(v,s):= s \bar{I}\left(\frac{v}{s},\frac{1}{s}\right) = \sup_{\eta \in \R^d, \;\l \in \R} (\eta, \l) \cdot( v, 1) - s \bL(\eta,\l). 
\]
Since $f(\cdot, \cdot)$ is the supremum of a family of 
linear functions, $f(\cdot, \cdot)$ is a convex function on
$H_\ell \times (0,1]$. 
Therefore, $\bar{J}(\cdot) = \inf_{s\in (0,1]} f(\cdot,s)$ is a
convex function on $H_\ell$. 

For the second part of the lemma, note that \eqref{vPregenformula} implies that $\bE X_{\tau_1} = \vp \bE \tau_1$.
Then, the law of large numbers and \eqref{Thesis_XtauLDPlb} imply that $\bar{I}(\vp \bE \tau_1, \bE \tau_1) = 0.$
The definition of $\bar{J}$ and the fact that $\bar{I}$ is non-negative imply that $\bar{J}(\vp)=0$. 
\end{proof}
We next evaluate some derivatives of $\bar{J}$.
For any function $g:\R^k\to (-\infty,\infty]$, let 
$\mathcal{D}_g= \{z\in \R^k: g(z)<\infty\}$ denote the {\it domain} of
$g$, and let
${\mathcal{D}^\circ_g}$ denote its interior. 
\begin{lem}\label{Jderivative}
Assume that $v_0$ and $s_0$ are such that $\bar{J}(v_0) = s_0 \bar{I}
 \left( \frac{v_0}{s_0}, \frac{1}{s_0} \right)$ and
 $(v_0/s_0, 1/s_0)\in {\mathcal{D}^\circ_{\bar{I}}}$.
Then,
\[
\frac{\del \bar{J}}{\del v_i} (v_0) = \frac{\del \bar{I}}{ \del x_i } 
\left( \frac{v_0}{s_0}, \frac{ 1}{s_0} \right) .
\]
\end{lem}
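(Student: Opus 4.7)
The idea is a standard envelope argument. Set $g(v,s) := s\bar{I}(v/s, 1/s)$, so that $\bar{J}(v) = \inf_{s \in (0,1]} g(v,s)$, and by assumption the infimum is attained at $s_0$. As shown in the proof of Lemma \ref{bJsimleprop},
\[
g(v,s) = \sup_{\eta \in \R^d,\, \l \in \R}\{\eta \cdot v + \l - s \bL(\eta, \l)\},
\]
so $g$ is jointly convex in $(v,s)$. Since $(v_0/s_0, 1/s_0) \in \mathcal{D}^\circ_{\bar{I}}$ and $\bar{I}$ is the Legendre transform of the smooth log-moment-generating function $\bL$, $\bar{I}$ is differentiable at $(v_0/s_0, 1/s_0)$; this will follow from the analyticity of Legendre transforms established in the Appendix. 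The chain rule applied to $g$ gives $\partial g / \partial v_i(v,s) = \partial \bar{I} / \partial x_i(v/s, 1/s)$, so in particular $g(\cdot, s_0)$ is differentiable at $v_0$ with $i$-th partial derivative $p_i := \partial \bar{I}/\partial x_i(v_0/s_0, 1/s_0)$.

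The upper bound on the difference quotient is immediate: since $\bar{J}(v) \leq g(v, s_0)$ for every $v \in H_\ell$ with equality at $v = v_0$, differentiability of $g(\cdot, s_0)$ at $v_0$ yields
\[
\bar{J}(v_0 + h e_i) - \bar{J}(v_0) \leq g(v_0 + h e_i, s_0) - g(v_0, s_0) = p_i h + o(h).
\]
For the matching lower bound I would invoke convexity of $\bar{J}$ (Lemma \ref{bJsimleprop}): from $2\bar{J}(v_0) \leq \bar{J}(v_0 + h e_i) + \bar{J}(v_0 - h e_i)$ together with the upper bound applied with $-h$, one obtains $\bar{J}(v_0 + h e_i) \geq \bar{J}(v_0) + p_i h - o(h)$. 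Combining the two bounds shows that $\bar{J}$ admits $p_i$ as its $i$-th partial derivative at $v_0$, which is the desired identity.

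The only nontrivial step is differentiability of $\bar{I}$ at the interior point $(v_0/s_0, 1/s_0)$ of its effective domain. Since the Appendix establishes a stronger (analyticity) statement for Legendre transforms, no separate work is required; the rest is routine convex calculus via the envelope principle. Note that we never differentiate in $s$, so the constraint $s \in (0,1]$ and the question of whether $s_0$ is interior or on the boundary play no role.
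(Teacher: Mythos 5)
Your proof is correct, and the upper bound is word-for-word the paper's argument: $\bar J(\cdot)\le f(\cdot,s_0)$ with equality at $v_0$, plus differentiability of $\bar I$ at $(v_0/s_0,1/s_0)$. The lower bound, however, takes a genuinely different route. The paper uses joint convexity of $f$ together with the first-order condition $\frac{\partial f}{\partial s}(v_0,s_0)=0$ to write
$f(v_0+he_i,s)\ge f(v_0,s_0)+\frac{\partial f}{\partial v_i}(v_0,s_0)\,h$ for every $s$, then takes the infimum over $s$. This requires knowing that $s_0$ is an interior critical point of $s\mapsto f(v_0,s)$; the paper deduces this from $(v_0,s_0)\in\mathcal{D}_f^\circ$, which is a bit delicate since $\mathcal{D}_f$ is defined without reference to the constraint $s\le 1$, so one must separately rule out $s_0=1$. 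Your argument instead uses midpoint convexity of $\bar J$ itself (already established in Lemma~\ref{bJsimleprop}), combined with the upper bound at $-h$, to produce the matching lower bound. This never differentiates in $s$ and is indifferent to whether the minimizer $s_0$ sits at the boundary of $(0,1]$, which is a small but real robustness gain. One minor caveat: you cite the Appendix analyticity lemma for differentiability of $\bar I$ at $(v_0/s_0,1/s_0)$, but that lemma gives regularity only on $\nabla\bL(\mathcal{D}^\circ_{\bL})$, which a priori may be a proper subset of $\mathcal{D}^\circ_{\bar I}$. For the lemma as stated, the correct reference is the one the paper uses: $\bL$ is strictly convex by nondegeneracy (Assumption~\ref{asmUE}), hence its Legendre transform $\bar I$ is differentiable throughout $\mathcal{D}^\circ_{\bar I}$ by Rockafellar, Theorem~26.3.
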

\begin{proof}
Since $(X_{\tau_1}, \tau_1)$ is, by Assumption
\ref{asmUE},
a non-degenerate $d+1$-dimensional random variable, 
$\bL(\eta, \l)$ is a strictly convex function on $\mathcal{D}_{\bar{\L}}$.
Since $\bar{I}$ is the Legendre transform of $\bL$, this implies that 
$\bar{I}(x,t)$ is continuously differentiable in 
${\mathcal{D}^\circ_{\bar{I}}}$ 
(see \cite[Theorem 26.3]{rConvex}). Therefore, 
$f(v,s)$ is continuously differentiable in the interior of
$\mathcal{D}_f =  \{ (v,s) : (v/s, 1/s) \in \mathcal{D}_I \}$.

Since $(v_0, s_0)\in
\mathcal{D}^\circ_f$, we have
$\frac{\del f}{\del s}(v_0, s_0) = 0$. 
Also, since $f(v,s)$ is convex as a function of $(v,s)$, 
\[
 f(v_0 + h e_i,s) \geq f(v_0, s_0) + \nabla f(v_0, s_0)
 \cdot( h e_i, s-s_0 ) = f(v_0, s_0) +  \frac{\del f}{\del v_i}(v_0,s_0) h,
\]
where in the second equality we used that
$\frac{\del f}{\del s}(v_0, s_0) = 0$. Since the right 
side of the above equation does not depend on $s$, we have
\begin{equation}\label{dJlb}
 \bar{J}(v_0 + h e_i) \geq f(v_0, s_0) +  
 \frac{\del f}{\del v_i}(v_0,s_0) h.
\end{equation}
On the other hand, a Taylor expansion of $f$ near $(v_0, s_0)$ implies that 
\begin{equation} \label{dJub}
 \bar{J}(v_0 + h e_i) \leq f(v_0 + h e_i, s_0) = 
 f(v_0, s_0) + \frac{\del f}{\del v_i}(v_0,s_0) h + o( h ). 
\end{equation}
Recalling that $\bar{J}(v_0) = f(v_0, s_0)$, \eqref{dJlb} and
\eqref{dJub} imply that $\frac{\del \bar{J}}{\del v_i} ( v_0, s_0) = 
\frac{\del f}{\del v_i}(v_0,s_0).$ The proof is completed by noting 
that the definition of $f(v,s)$ implies that 
$\frac{\del f}{\del v_i} (v,s) = \frac{\del \bar{I}}{\del x_i} (v/s, 1/s)$. 
\end{proof}

We now prove some more detailed properties of the function $\bar{J}(v)$ in the non-nestling and nestling cases, respectively. In particular, for certain $v$ we are able to identify the minimizing $s$ in the definition of $\bar{J}(v)$, and we are able to determine certain differentiability properties of $\bar{J}$. 

\begin{subsection}{Properties of $\bar{J}$ - Non-nestling Case} \label{nnbJprop}
When 
$P$ is non-nestling in direction $\ell$, 
the regeneration time $\tau_1$ has exponential tails 
\cite[Theorem 2.1]{sSlowdown}.
That is, Sznitman proved that there exists a constant $C_2>0$ such that 
\begin{equation} \label{tau1exptail}
 \bE e^{C_2 \tau_1} < \infty.
\end{equation}
Let
\[
\mathcal{C} := \{\eta\in \R^d: \| \eta \| < C_2/2 \} .
\]
If $\eta \in \mathcal{C}$, then $-C_2\tau_1/2  \leq \eta
\cdot X_{\tau_1} < C_2\tau_1/2 $ since $\|X_{\tau_1}\| \leq \tau_1$. Thus, 
\[
 1 = \bE e^{-C_2\tau_1/2  + C_2\tau_1/2 } < \bE e^{ \eta \cdot X_{\tau_1} +
 C_2\tau_1/2 } < \bE e^{C_2\tau_1/2  + C_2\tau_1/2 } < \infty, 
\]
and so  $\bL(\eta, C_2/2) \in (0, \infty)$ for all $\eta\in \mathcal{C}$.
Since $\bL(\eta, \l)$ is strictly increasing in $\l$ and since
$\lim_{\l\ra-\infty} \bL(\eta, \l) = -\infty$, we may define a
function $\l(\eta)$ on $\mathcal{C}$ by
\[
 \l(\eta) \text{ is the unique solution to } \bL(\eta, \l(\eta) )
 = 0, \quad \forall\eta\in \mathcal{C}. 
\]
Since $\bL$ is analytic in a neighborhood of $(\eta, \l(\eta))$ for any
$\eta \in \mathcal{C}$, a version of the implicit function 
theorem \cite[Theorem 7.6]{fgHolomorphic} implies that 
$\l(\eta)$ is analytic as a function of $\eta \in \mathcal{C}$. 
Differentiating the equality $\bL(\eta, \l(\eta)) = 0$, we obtain that
\[
 \nabla \l(\eta) = - \frac{ \bE X_{\tau_1} e^{\eta \cdot X_{\tau_1} + 
 \l(\eta) \tau_1} }{ \bE \tau_1 e^{\eta \cdot X_{\tau_1} + \l(\eta) \tau_1} }.
\]
This is useful in the proof of the following lemma.
\begin{lem} \label{nnanalytic}
Let $P$ be a non-nestling law on environments, and
let $\mathcal{A} := -\nabla \l( \mathcal{C} ) =
\{ - \nabla \l(\eta) : \eta \in \mathcal{C} \}$. Then,
$\vp \in \mathcal{A}$ and $\bar{J}$ is analytic and strictly
convex on the open set $\mathcal{A}$. 
Moreover, if 
\begin{equation} \label{vsnn}
 v_0 = -\nabla \l(\eta_0) = \frac{ \bE X_{\tau_1}
 e^{\eta_0 \cdot X_{\tau_1} + \l(\eta_0) \tau_1} }{ \bE \tau_1
 e^{\eta_0 \cdot X_{\tau_1} + \l(\eta_0) \tau_1} } 
 \qquad \text{and} \qquad s_0 = \frac{1}{\bE \tau_1
 e^{\eta_0 \cdot X_{\tau_1} + \l(\eta_0) \tau_1}} 
\end{equation}
for some $\eta_0 \in \mathcal{C}$, then 
\[
 \bar{J}(v_0) = s_0 \bar{I}\left( \frac{v_0}{s_0}, \frac{1}{s_0} \right), 
 \qquad \text{and} \qquad \nabla \bar{J}(v_0) = \eta_0,
\]
and $s_0$ is the unique value of $s$ which attains the
minimum in the definition of $\bar{J}(v_0)$.
\end{lem}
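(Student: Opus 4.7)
The plan is to exploit Legendre-transform duality throughout, reducing $\bar{J}$ on $\mathcal{A}$ to the convex conjugate of the scalar function $-\l$. I would start from the identity
\[
s\bar{I}\!\left(\frac{v}{s},\frac{1}{s}\right)=\sup_{\eta\in\R^d,\,\l\in\R}\bigl[\eta\cdot v+\l - s\bL(\eta,\l)\bigr],
\]
which is immediate from the definition of $\bar{I}$. Restricting the supremum to the curve $\{(\eta,\l(\eta)):\eta\in\mathcal{C}\}$, on which $\bL$ vanishes, yields a bound independent of $s$:
\[
\bar{J}(v)\;\geq\;\sup_{\eta\in\mathcal{C}}\bigl[\eta\cdot v+\l(\eta)\bigr],\qquad v\in H_\ell.
\]
Since $\bL(0,0)=0$, the formula for $\nabla\l$ together with \eqref{vPregenformula} gives $-\nabla\l(0)=\bE X_{\tau_1}/\bE\tau_1=\vp$, so $\vp\in\mathcal{A}$.

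For $v_0=-\nabla\l(\eta_0)$ and $s_0$ as in \eqref{vsnn}, I would verify that $(\eta_0,\l(\eta_0))$ is the Legendre-dual point of $(v_0/s_0,1/s_0)$. Using the defining identity $\bE\exp(\eta_0\cdot X_{\tau_1}+\l(\eta_0)\tau_1)=1$, the two first-order conditions
\[
\nabla_\eta\bL(\eta_0,\l(\eta_0))=\bE\bigl[X_{\tau_1}e^{\eta_0\cdot X_{\tau_1}+\l(\eta_0)\tau_1}\bigr]=\frac{v_0}{s_0},\qquad \partial_\l\bL(\eta_0,\l(\eta_0))=\frac{1}{s_0}
\]
can be read off directly, and strict convexity of $\bL$ (from nondegeneracy of $(X_{\tau_1},\tau_1)$ under Assumption \ref{asmUE}) identifies $(\eta_0,\l(\eta_0))$ as the unique maximizer in the Legendre transform. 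Since $\bL(\eta_0,\l(\eta_0))=0$ this gives $s_0\bar{I}(v_0/s_0,1/s_0)=\eta_0\cdot v_0+\l(\eta_0)$. The admissibility $s_0\in(0,1]$ is automatic from $\tau_1\geq 1$ a.s., and combining with the lower bound from the previous paragraph forces $\bar{J}(v_0)=s_0\bar{I}(v_0/s_0,1/s_0)=\eta_0\cdot v_0+\l(\eta_0)$. The claim $\nabla\bar{J}(v_0)=\eta_0$ follows from Lemma \ref{Jderivative}, using $\nabla_x\bar{I}(v_0/s_0,1/s_0)=\eta_0$.

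The equation $\bar{J}(v)=\sup_{\eta\in\mathcal{C}}[\eta\cdot v+\l(\eta)]$ for $v\in\mathcal{A}$ now exhibits $\bar{J}|_{\mathcal{A}}$ as the Legendre transform of the convex function $-\l$ on $\mathcal{C}$. Analyticity of $-\l$ is already established via the implicit function theorem; strict convexity of $-\l$ follows because any line segment in the graph of $\l$ would lie on the zero level set of the strictly convex function $\bL$, a contradiction; and a Schur-complement computation applied to the Hessian of $\bL$ (which, being the covariance matrix of $(X_{\tau_1},\tau_1)$ under the tilted measure, is positive definite) shows that the Hessian of $-\l$ is positive definite on $\mathcal{C}$. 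Consequently $-\nabla\l$ is a local diffeomorphism, so $\mathcal{A}$ is open, and the appendix lemma on analyticity of Legendre transforms yields analyticity and strict convexity of $\bar{J}$ on $\mathcal{A}$. Uniqueness of $s_0$ follows from the same injectivity: any other minimizer $s^*\in(0,1]$ must, by the interior-critical-point calculation for $s\mapsto s\bar{I}(v_0/s,1/s)$, produce a Legendre-dual pair $(\eta^*,\l^*)$ with $\bL(\eta^*,\l^*)=0$ and $-\nabla\l(\eta^*)=v_0$, forcing $\eta^*=\eta_0$ and hence $s^*=s_0$. The main obstacle is the verification of strict convexity and positive-definite Hessian for $-\l$, which are the precise inputs needed to invoke the appendix's Legendre-transform lemma.
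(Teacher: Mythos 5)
Your proof is correct, but it follows a genuinely different route from the paper's. The paper works directly with $f(v,s)=s\bar{I}(v/s,1/s)$: it verifies $\partial_s f(v_0,s_0)=0$ by unwinding the Legendre duality between $\bar{I}$ and $\bL$, concludes $\bar{J}(v_0)=f(v_0,s_0)$ from convexity of $f$ in $s$, and then gets analyticity and strict convexity of $\bar{J}$ near $v_0$ by an implicit function theorem applied to $\partial_s f(v,s)=0$ together with positivity of $\partial^2_s f$ and of $D^2 f$ (all inherited from $D^2\bar{I}$). You instead observe from the very start that restricting the supremum defining $\bar{I}$ to the zero-level set $\{(\eta,\l(\eta)):\eta\in\mathcal{C}\}$ of $\bL$ gives the $s$-independent lower bound $\bar{J}(v)\geq\sup_{\eta\in\mathcal{C}}[\eta\cdot v+\l(\eta)]=(-\l)^*(v)$, and then sandwich the inf over $s$ against this sup to identify $\bar{J}=(-\l)^*$ on $\mathcal{A}$. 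This makes $\bar{J}|_{\mathcal{A}}$ literally the Legendre transform of the strictly convex, analytic function $-\l$, so the appendix lemma applies in one shot to $-\l$ (rather than, as in the paper, to $\bL$ followed by an extra IFT step). The payoff of your version is conceptual clarity and economy — $\bar{J}$ on $\mathcal{A}$ is exhibited as the convex conjugate of $-\l$, which also makes the identity $\nabla\bar{J}(v_0)=\eta_0$ immediate. The one place your write-up is terser than it should be is the verification that $-\l$ has a positive-definite Hessian: the standard level-set identity gives $-D^2\l(\eta)=\frac{1}{\partial_\l\bL}\bigl[\nabla^2_{\eta\eta}\bL+\nabla\l\,\nabla^t_{\eta\l}\bL+\nabla_{\eta\l}\bL\,\nabla^t\l+\partial^2_{\l\l}\bL\,\nabla\l\,\nabla^t\l\bigr]$ evaluated at $(\eta,\l(\eta))$, and one should note this is $\frac{1}{\partial_\l\bL}(I\ \ \nabla\l)\,D^2\bL\,(I\ \ \nabla\l)^t$, manifestly positive definite since $D^2\bL$ is and $\partial_\l\bL>0$; it is not exactly the Schur complement of the $\l\l$-entry of $D^2\bL$, though that Schur complement also appears if one instead parametrizes by $\nabla_\eta\bL$. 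Your uniqueness argument via injectivity of $-\nabla\l$ is sound, once one notes (as you implicitly do) that if a second minimizer $s^*\in(0,1]$ existed then convexity of $s\mapsto f(v_0,s)$ would force a whole interval of minimizers, hence an interior one, where the critical-point calculation applies.
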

\begin{proof}
	Due to uniform ellipticity (Assumption \ref{asmUE}),
$\bL$ is strictly convex, and thus $\bL(\eta, \l(\eta)) = 0$ implies that
$\l(\eta)$ is strictly concave as a function of $\eta$. 
Therefore, $\nabla \l(\eta)$ is a one-to-one function on $\mathcal{C}$. 
Thus, $\mathcal{A}$ is an open set, 
and $\vp = \bE X_{\tau_1} / \bE \tau_1 = - \nabla \l( \mathbf{0}) 
\in \mathcal{A}^\circ$. 

Since $\bL$ is analytic and strictly convex in 
$\mathcal{D}^\circ_{\bar{\L}}$ and $\bar{I}$ is the Legendre transform of 
$\bL$, we have that $\bar{I}$ is analytic and strictly convex 
in the interior of $ \mathcal{D}'_{\bar{\L}} = \nabla 
\bL( \mathcal{D}_{\bar{\L}})$ (see Lemma \ref{Appendix_analytic} 
in Appendix \ref{Aanalytic}). Moreover, for any $(\eta,\l)\in
\mathcal{D}^\circ_{\bar{\L}}$,
\begin{equation} \label{LegendreDuals}
\bar{I}\left( \nabla \bL(\eta, \l)\right) = 
(\eta, \l) \cdot \nabla \bL(\eta, \l) - \bL(\eta, \l), 
\qquad \text{and} \qquad 
\nabla \bar{I} \left( \nabla \bL(\eta, \l) \right) = (\eta, \l).
\end{equation}

Letting $v_0$ and $s_0$ be defined as in \eqref{vsnn}, we have that 
$(v_0/s_0, 1/s_0 ) = \nabla \bL( \eta_0, \l(\eta_0))$. 
Recalling the definition of $f(v,s)$, we obtain that 
\[
 \frac{\del f}{\del s}(v,s) = \bar{I}\left( \frac{v}{s}, 
 \frac{1}{s} \right) - \nabla \bar{I}\left( \frac{v}{s}, 
 \frac{1}{s} \right) \cdot \left( \frac{v}{s}, \frac{1}{s} \right). 
\]
Therefore, 
\begin{align*}
 \frac{\del f}{\del s}(v_0,s_0) &= \bar{I}\left( \frac{v_0}{s_0}, \frac{1}{s_0} \right) - \nabla \bar{I}\left( \frac{v_0}{s_0}, \frac{1}{s_0} \right) \cdot \left( \frac{v_0}{s_0}, \frac{1}{s_0} \right)\\
&= \bar{I}\left( \nabla \bL(\eta_0, \l(\eta_0 )) \right) - \nabla \bar{I}\left( \nabla \bL(\eta_0, \l(\eta_0 )) \right) \cdot  \nabla \bL(\eta_0, \l(\eta_0 ))  \\
&= \bar{I}\left( \nabla \bL(\eta_0, \l(\eta_0 )) \right) - (\eta_0, \l(\eta_0) ) \cdot \nabla \bL(\eta_0, \l(\eta_0 ))  \\
&= -\bL(\eta_0, \l(\eta_0)) = 0, 
\end{align*}
where the third and fourth equalities follow from \eqref{LegendreDuals}.
Since $f(v,s)$ is convex as a function of $(v,s)$, 
it follows that $\bar{J}(v_0) = f(v_0, s_0) = s_0 \bar{I}(v_0/s_0, 1/s_0)$. 

Now,  with $D^2\bar{I}$ denoting the Hessian of $\bar I$,
\[
 \frac{\del^2 f}{\del s^2} (v,s) = \frac{1}{s^3} (v, 1) \cdot D^2 
 \bar{I}\left( \frac{v}{s}, \frac{1}{s} \right) (v,1)^t. 
\]
Since $\bar{I}(x,t)$ is strictly convex in a neighborhood of
$\nabla \bL(\eta_0, \l(\eta_0) ) = (v_0/s_0, 1/s_0)$, $D^2 \bar{I}(x,t)$ 
is strictly positive definite in a neighborhood of $(v_0/s_0, 1/s_0)$. 
Thus $\frac{\del^2 f}{\del s^2} (v_0,s_0) > 0$, 
and because $f(v,s)$ is analytic in a neighborhood of $(v_0, s_0)$, 
another use of  the implicit function theorem
\cite[Theorem 7.6]{fgHolomorphic} implies 
that 
there exists an analytic function $s(v)$ in a neighborhood of 
$v_0$ such that $s(v_0)= s_0$ and $\frac{\del f}{\del s} (v,s(v)) = 0$. 
Thus, $\bar{J}(v) = f(v, s(v))$, and therefore $\bar{J}(v)$ is 
analytic in a neighborhood of $v_0$. Moreover, since 
$\frac{\del^2 f}{\del s^2} (v_0,s_0) > 0$, $s_0$ is the unique 
value of $s$ obtaining the minimum in the definition of $\bar{J}(v_0)$. 

Since $\bar{J}(v) = f(v,s(v))$ in a neighborhood of $v_0$, 
$\bar{J}$ is strictly convex in a neighborhood of $v_0$ if $f(v,s)$ is strictly convex in a neighborhood of $(v_0, s_0)$. 
To see that $f(v,s)$ is strictly convex in a neighborhood of $(v_0, s_0)$, note that the definition of $f(v,s)$ implies that for $z\in\R^d$ and $w\in\R$, 
\[
 (z,w)^t \cdot D^2 f(v,s) \cdot (z,w) = \frac{1}{s} 
 \left( z - \frac{w}{s}v , \frac{-w}{s} \right) \cdot D^2 
 \bar{I}\left( \frac{v}{s}, \frac{1}{s} \right) \cdot 
 \left( z - \frac{w}{s}v , \frac{-w}{s} \right)^t.
\]
Since $D^2 \bar{I}(x,t)$ is strictly positive definite in a 
neighborhood of $(v_0/s_0, 1/s_0)$, this implies that $D^2 f(v,s)$ is 
strictly positive definite in a neighborhood of $(v_0, s_0)$, and 
thus $f(v,s)$ is strictly convex in a neighborhood of $(v_0, s_0)$. 

Finally, since $\bar{J}(v_0) = f(v_0, s_0)$ and $(v_0/s_0, 1/s_0) = 
\nabla \bL( \eta_0, \l(\eta_0))\in
\mathcal{D}^\circ_{\bar{I}}$, Lemma \ref{Jderivative} implies that 
\[
 \nabla \bar{J}(v_0) = \left( \frac{\del \bar{I}}{\del x_i} 
 \left( \frac{v_0}{s_0}, \frac{1}{s_0}  \right)\right)_{i=1}^d.
\]
However, since $\nabla \bar{I}( v_0/s_0, 1/s_0) = 
\nabla \bar{I}( \nabla \bL( \eta_0, \l(\eta_0))) = 
(\eta_0, \l(\eta_0))$, we obtain that $\nabla \bar{J}(v_0) = \eta_0$. 
\end{proof}
\end{subsection}

\begin{subsection}{Properties of $\bar{J}$ - Nestling Case} \label{nestbJprop}

In this subsection, we will assume that $P$ is nestling, $d\geq 2$, and Assumptions \ref{asmUE}, \ref{asmIID}, and \ref{asmT} hold. 

\begin{lem}\label{bLnest}
 If $P$ is nestling, then $\bL(\eta, \l) = \infty$ for any $\l>0$. 
\end{lem}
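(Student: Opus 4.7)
The plan is to prove $\bL(\eta,\l) = \log \bE e^{\eta\cdot X_{\tau_1}+\l\tau_1} = +\infty$ whenever $\l>0$ by exhibiting ``trap'' environments where $\tau_1$ is forced to be exponentially large in an auxiliary length scale $L$ while $\|X_{\tau_1}\|$ stays linear in $L$. More precisely, for each large integer $L$, I construct an event $\mathcal{A}_L$ satisfying $\bP(\mathcal{A}_L)\geq e^{-CL^d}$ on which $\tau_1 \geq e^{cL}$ and $\|X_{\tau_1}\|\leq CL$. Using $\eta\cdot X_{\tau_1}\geq -\|\eta\|\,\|X_{\tau_1}\|$ pointwise on $\mathcal{A}_L$,
\[
\bE e^{\eta\cdot X_{\tau_1}+\l\tau_1} \geq e^{-\|\eta\|CL + \l e^{cL}}\,\bP(\mathcal{A}_L) \geq \exp\bigl(\l e^{cL} - O(L^d)\bigr) \xrightarrow[L\to\infty]{} +\infty,
\]
giving the conclusion for all $\l>0$ and all $\eta\in\R^d$.

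For the trap construction, nestling together with Assumption \ref{asmUE} yields constants $\delta,p_0>0$ such that $P(d(\w)\cdot\ell\leq -\delta)\geq p_0$ (the marginally nestling case, where no strict back-drift exists, needs a modified trap but the scheme is analogous). Set $B_L:=\{x\in\Z^d:\|x\|_\infty\leq L\}$ and let $\mathcal{T}_L$ be the event that at every site $x\in B_L$ the local drift has $\ell$-component at most $-\delta$; Assumption \ref{asmIID} then gives $P(\mathcal{T}_L)\geq p_0^{|B_L|}\geq e^{-CL^d}$. Conditioned on $\mathcal{T}_L$ (and, as needed, combined with inward drifts on the boundary of $B_L$ drawn from the support of $P$ to form a potential well in $d\geq 2$), the exit time $T_{\mathrm{ex}}$ from $B_L$ satisfies $P_\w(T_{\mathrm{ex}}\geq e^{cL})\geq 1/2$ by a standard Chernoff estimate for biased random walks. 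The backward drift inside the trap further forces each level $X_n\cdot\ell$ attained at times $n<T_{\mathrm{ex}}$ to be revisited from below, so no such $n$ is a regeneration time and $\tau_1 \geq T_{\mathrm{ex}}$.

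To complete the construction of $\mathcal{A}_L$ with $\|X_{\tau_1}\|\leq CL$, intersect $\mathcal{T}_L$ with the uniformly positive-probability event that the walk regenerates within a constant number of steps after exiting $B_L$. This event has positive probability by the strong Markov property at $T_{\mathrm{ex}}$ together with finiteness of $\bE\tau_1$ (recalled in the paper from \cite{sConditionT}), and on it $\|X_{\tau_1}\| \leq L\sqrt{d} + O(1)$. Since $\bP=\P(\,\cdot\,|\,D)$ with $\P(D)>0$ (Theorem \ref{Thesis_regindep}), this yields $\bP(\mathcal{A}_L)\geq c\, e^{-CL^d}$ as required.

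The main obstacles are (i) justifying $\tau_1 \geq T_{\mathrm{ex}}$ on the trap event—complicated by the fact that $\tau_1$ is not a stopping time, so the argument must invoke the definition of $\tau_1$ directly together with the recurrence of biased random walks inside $B_L$—and (ii) arranging an exponentially long trap escape time in $d\geq 2$, which in general requires a full potential-well construction, extracting drifts in sufficiently many directions from the support of $P$, rather than just a one-directional back-drift (a pure $-\ell$ drift in $B_L$ would allow sideways escape in time $O(L^2)$, which is insufficient to dominate $L^d$ in the probability estimate). The marginally nestling case, which calls for a slightly different trap, is a secondary technicality.
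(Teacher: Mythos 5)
Your overall strategy is the same as the paper's: both arguments exploit that nestling environments admit ``trap'' configurations in a box of radius $L$, occurring with $P$-probability at least $e^{-CL^d}$, on which $\tau_1$ is exponentially large in $L$ while $\|X_{\tau_1}\|$ is only $O(L)$; one then wins because $\l e^{cL}$ dominates $L^d$. The difference is that the paper gets the hard trap estimate essentially for free by citing Sznitman \cite[Theorem 2.7]{sSlowdown}, shifting his construction so the trap sits near $(\log n)\ell$ rather than at the origin, and treating the marginally nestling case separately with an $e^{-\e n}$-cost approximation by a strictly nestling law. You instead attempt to rebuild Sznitman's estimate from scratch, and this is where the gaps appear.

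Concretely, three steps in your sketch do not go through as written. First, the event $\mathcal{T}_L$ you actually define (all sites of $B_L$ have $\ell$-drift at most $-\delta$) does \emph{not} give $P_\w(T_{\mathrm{ex}}\geq e^{cL})\geq 1/2$: with a one-directional back-drift the walk escapes sideways in time $O(L^2)$, which is overwhelmed by the $e^{-CL^d}$ cost of $\mathcal{T}_L$ for $d\geq 2$. You acknowledge this in your ``obstacles'' paragraph and gesture at a potential well, but never construct one; that construction \emph{is} Sznitman's theorem, so the central escape estimate in your argument is asserted rather than proved. Second, your box $B_L$ is centered at the origin, which breaks the reduction from $\P$ to $\bP=\P(\,\cdot\,|D)$: on $\mathcal{T}_L$ the walk is driven to $\ell$-levels below $0$ almost immediately and repeatedly, so $\P(\mathcal{A}_L\cap D)$ can be far smaller than $\P(\mathcal{A}_L)$, and $\bP(\mathcal{A}_L)\geq c\,e^{-CL^d}$ does not follow. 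The paper fixes exactly this by centering the trap near $(\log n)\ell$ and by additionally confining $\|X_{\tau_1}\|<3\log n$ inside the $\bP$-probability. Third, the marginally nestling case is not a secondary technicality within your framework: there is no strictly inward drift available in the support of $P$, so the $e^{-CL^d}$ scaling is unattainable and a different estimate (the paper's $e^{-\e n}$ bound, uniform in $\e>0$) is required; your sketch does not supply one.
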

\begin{proof}
Sznitman has shown \cite[Theorem 2.7]{sSlowdown} that when 
Assumptions \ref{asmUE}, \ref{asmIID}, and \ref{asmT} hold and $P$ 
is nestling and not marginally nestling, then 
\begin{equation}
 \liminf_{n\ra\infty}  
 \frac{\log \P(\tau_1 > n)}{(\log n)^d}> -\infty. \label{tautail}
\end{equation}
Sznitman proves \eqref{tautail} by constructing a ``trap'' of radius 
$\log n$ around the origin and then forcing the random walk to stay 
in the trap for at least the first $n$ steps of the walk. If instead we 
construct the trap centered around a point near $(\log n)\ell$, then we 
can adapt Sznitman's argument (using Assumption \ref{asmUE}) to show that 
when $P$ is nestling but not marginally nestling,
\begin{equation}
 \liminf_{n\ra\infty} \frac{\log \bP(\tau_1 > n)}
 {(\log n)^d }> \liminf_{n\ra\infty} 
 \frac{ \log \bP(\tau_1 > n, \; \| X_{\tau_1} \| < 
 3 \log n )}{(\log n)^d }> -\infty. \label{tautail2}
\end{equation}
In the marginally nestling case, we get immediately by approximating
a marginally nestling walk by a nestling walk for the first $n$ step
(at exponential cost $e^{-\e n}$),
that for any $\e>0$, 
\begin{equation}
  \liminf_{n\ra\infty} 
 \frac{ \log \bP(\tau_1 > n, \; \| X_{\tau_1} \| < 
 3 \log n )}{n}> -\e. \label{tautail2a}
\end{equation}
The statement of the lemma follows easily from \eqref{tautail2} and \eqref{tautail2a}.  
\end{proof}

For any $\eta\in \R^d$, let 
\begin{equation}
	\label{eq-061208b}
 \bL_X(\eta) = \bL(\eta, 0) = \log \bE e^{\eta\cdot X_{\tau_1}}.
\end{equation}
Recall the constant $C_1$ in 
Assumption 
\ref{asmT}, and
define the following subsets of $\R^d$:
\[
\mathcal{C}=\{\eta\in \R^d: \|\eta\|<C_1\},\qquad
 \mathcal{C}^+ = \mathcal{C} \cap \{ \bL_X(\eta) > 0 \} 
 \qquad \text{and} \qquad
 \mathcal{C}^0 = \mathcal{C} \cap \{ \bL_X(\eta) = 0 \}.
\]
As in the non-nestling case, for any $\eta\in \mathcal{C}^+ 
\cup \mathcal{C}^0$, let $\l(\eta)$ be the unique solution 
to $\bL(\eta, \l(\eta) ) = 0$. (Lemma \ref{bLnest} implies that $\bL(\eta, \l(\eta)) = 0$ does not have a solution when $\eta \in \mathcal{C} \backslash (\mathcal{C}^+ \cup \mathcal{C}^0)$ ). 
Note that $\l(\eta)$ is 
analytic and strictly concave on $\mathcal{C}^+$, 
and that $\l(\eta)= 0$ for all $\eta\in \mathcal{C}^0$. 
Define
\[
 \gamma(\eta) := \frac{ \bE X_{\tau_1} e^{\eta \cdot X_{\tau_1} +
 \l(\eta) \tau_1} }{ \bE \tau_1 e^{\eta \cdot X_{\tau_1} + \l(\eta) \tau_1} },
\]
so that $\gamma(\eta) = - \nabla \l(\eta)$ for $\eta \in \mathcal{C}^+$, 
and $\gamma(\eta)$ is continuous as a function of $\eta$. Also,
since $\l(\eta)$ is strictly concave as a function of 
$\eta$ in $\mathcal{C}^+$, then $\gamma(\eta)$ must be a one-to-one function. 
Let 
\[
 \mathcal{A}^+ := \gamma(\mathcal{C}^+) = \{ \gamma(\eta): 
 \eta\in \mathcal{C}^+ \}, \qquad \text{and} \qquad 
\mathcal{A}^0 := \gamma(\mathcal{C}^0) = \{ \gamma(\eta): 
\eta\in \mathcal{C}^0 \}.
\]
Then $\mathcal{A}^+$ is an open subset, and since 
$\vp = \bE X_{\tau_1}/ \bE \tau_1 = \gamma(\mathbf{0})$,
then $\vp \in \mathcal{A}^0 \subset \del \mathcal{A}^+$. 
Also, since $\mathcal{C}^0$ is a $d-1$ dimensional set with non-empty (relative) interior (by the implicit function theorem), the same is true of $\mathcal{A}^0$.



\begin{lem} \label{bJnest}
 Let $P$ be  nestling.
Then, $\bar{J}$ is analytic and 
strictly convex on the open set $\mathcal{A}^+$. Moreover, if 
\begin{equation}\label{vsnest}
 v_0 = \gamma(\eta_0) = \frac{ \bE X_{\tau_1} e^{\eta_0 \cdot X_{\tau_1} + \l(\eta_0) \tau_1} }{ \bE \tau_1 e^{\eta_0 \cdot X_{\tau_1} + \l(\eta_0) \tau_1} }, \qquad \text{and} \qquad
s_0 = \frac{1}{\bE \tau_1 e^{\eta_0 \cdot X_{\tau_1} + \l(\eta_0) \tau_1}}
\end{equation}
for some $\eta_0 \in \mathcal{C}^+$, then 
\[
 \bar{J}(v_0) = s_0 \bar{I}\left( \frac{v_0}{s_0}, \frac{1}{s_0} \right), \qquad \text{and} \qquad
\nabla{J}(v_0) = \eta_0,
\]
and $s_0$ is the unique value of $s$ which attains the minimum in the definition of $\bar{J}(v_0)$.
\end{lem}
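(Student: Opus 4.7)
The plan is to run the proof of Lemma \ref{nnanalytic} essentially verbatim, after first checking that the Legendre/implicit-function machinery used there still applies in the nestling case. The key preliminary observation is that for $\eta_0 \in \mathcal{C}^+$ we have $\bL_X(\eta_0) > 0$ and, by Lemma \ref{bLnest}, $\bL(\eta_0, \l) = \infty$ for $\l > 0$. Since $\bL$ is strictly increasing in $\l$ and $\bL(\eta_0, \l) \to -\infty$ as $\l \to -\infty$, the unique solution $\l(\eta_0)$ of $\bL(\eta_0, \l) = 0$ satisfies $\l(\eta_0) < 0$, so $(\eta_0, \l(\eta_0))$ lies in the interior of $\mathcal{D}_{\bar{\L}}$. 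On this open set $\bL$ is analytic and (by uniform ellipticity) strictly convex, so the implicit function theorem yields analyticity and strict concavity of $\l(\cdot)$ on $\mathcal{C}^+$, and hence makes $\gamma = -\nabla \l$ a real-analytic diffeomorphism of $\mathcal{C}^+$ onto the open set $\mathcal{A}^+$.

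Granted this, I would repeat the non-nestling argument with $\mathcal{C}$ replaced by $\mathcal{C}^+$. With $v_0, s_0$ as in \eqref{vsnest}, differentiating $\bL(\eta, \l(\eta)) = 0$ gives $(v_0/s_0, 1/s_0) = \nabla \bL(\eta_0, \l(\eta_0))$, a point in $\mathcal{D}^\circ_{\bar{I}}$ where, by Lemma \ref{Appendix_analytic} and \eqref{LegendreDuals}, $\bar{I}$ is analytic and strictly convex. The same computation as in Lemma \ref{nnanalytic} then gives
\[
\frac{\del f}{\del s}(v_0,s_0) = \bar{I}\!\left(\tfrac{v_0}{s_0},\tfrac{1}{s_0}\right) - (\eta_0,\l(\eta_0)) \cdot \left(\tfrac{v_0}{s_0},\tfrac{1}{s_0}\right) = -\bL(\eta_0,\l(\eta_0)) = 0,
\]
so convexity of $f$ in $(v,s)$ forces $\bar{J}(v_0) = f(v_0,s_0) = s_0 \bar{I}(v_0/s_0, 1/s_0)$. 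Strict convexity of $\bar{I}$ at $(v_0/s_0, 1/s_0)$ gives $\del^2 f/\del s^2(v_0,s_0) > 0$, so a second use of the implicit function theorem produces an analytic critical-point function $s(v)$ near $v_0$ with $s(v_0) = s_0$ and $\bar{J}(v) = f(v,s(v))$, proving both analyticity of $\bar{J}$ and uniqueness of $s_0$ as the minimizer. Strict convexity of $\bar{J}$ on $\mathcal{A}^+$ then follows from the very same Hessian identity used in the non-nestling proof, and Lemma \ref{Jderivative} combined with $\nabla \bar{I}(\nabla \bL(\eta_0, \l(\eta_0))) = (\eta_0, \l(\eta_0))$ yields $\nabla \bar{J}(v_0) = \eta_0$.

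The only point that requires extra care, relative to Lemma \ref{nnanalytic}, is the absence of an a priori open strip on which $\bL(\eta, \l)$ is finite for all small $\l > 0$: in the nestling case the relevant $(\eta_0, \l(\eta_0))$ lies in the half-space $\{\l < 0\}$, and $\bL$ blows up as one crosses $\l = 0$. I expect this to be a cosmetic rather than a substantive obstacle, since the entire argument is local at $(\eta_0, \l(\eta_0))$, where $\bL$ remains analytic and strictly convex; all invocations of Legendre duality, Lemma \ref{Appendix_analytic}, and the implicit function theorem in the non-nestling proof are insensitive to the global shape of $\mathcal{D}_{\bar{\L}}$ and transfer without modification.
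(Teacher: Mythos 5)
Your proposal is correct and is essentially the paper's argument: the paper's proof of Lemma~\ref{bJnest} consists precisely of citing Lemma~\ref{nnanalytic} and observing that $\bL(\eta,\l(\eta))=0$ on $\mathcal{C}^+$ with $\bL$ analytic and strictly convex near $(\eta_0,\l(\eta_0))$. Your additional check that $\l(\eta_0)<0$ places $(\eta_0,\l(\eta_0))$ in $\mathcal{D}^\circ_{\bar{\L}}$ (so the local Legendre/implicit-function machinery survives the blow-up of $\bL$ across $\l=0$) is exactly the detail the paper's terse proof leaves implicit, and you have handled it properly.
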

\begin{proof}
The proof is exactly the same as the proof of Lemma \ref{nnanalytic}, and follows from the fact that $\bL(\eta, \l(\eta)) = 0$ for $\eta \in \mathcal{C}^+$ and the fact that $\bL(\eta, \l)$ is analytic and strictly convex in a neighborhood of $(\eta_0, \l(\eta_0))$ for any $\eta_0 \in \mathcal{C}^+$. 
\end{proof}

Since the sequence
$X_{\tau_1}, X_{\tau_2}-X_{\tau_1}, \ldots$ is \iid 
under $\bP$, Cram\'er's Theorem \cite[Theorem 6.1.3]{dzLDTA} 
implies that $X_{\tau_k}/k$ satisfies a large deviation principle 
under the measure $\bP$ with rate function $\bar{I}_1(x)$ given by
\[
 \bar{I}_1(x) = \sup_{\eta\in\R^d} \left[\eta \cdot x - \bL_X(\eta)\right].
\]
\begin{lem}\label{I1I2lb}
$\bar{I}_1(x) \leq \inf_{t\in\R} \bar{I}(x,t)$.
\end{lem}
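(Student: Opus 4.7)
The plan is to deduce the inequality directly from the definitions of $\bar{I}$ and $\bar{I}_1$ as Legendre--Fenchel transforms, using that $\bar{\L}_X$ is precisely the restriction of $\bar{\L}$ to the hyperplane $\l = 0$. Explicitly, I would start from
\[
\bar{I}(x,t) = \sup_{\eta \in \R^d,\; \l \in \R} \bigl[(\eta, \l) \cdot (x,t) - \bar{\L}(\eta, \l)\bigr]
\]
and lower bound the supremum by fixing $\l = 0$, obtaining
\[
\bar{I}(x,t) \;\geq\; \sup_{\eta \in \R^d} \bigl[\eta \cdot x + 0 \cdot t - \bar{\L}(\eta, 0)\bigr] \;=\; \sup_{\eta \in \R^d} \bigl[\eta \cdot x - \bar{\L}_X(\eta)\bigr] \;=\; \bar{I}_1(x),
\]
where the second equality uses the definition of $\bar{\L}_X$ in \eqref{eq-061208b}. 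Since the resulting bound does not depend on $t$, taking the infimum over $t \in \R$ on the left gives $\inf_{t \in \R} \bar{I}(x,t) \geq \bar{I}_1(x)$, which is the claim.

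Conceptually, this is nothing but one direction of the contraction principle applied to the projection $\pi : \R^{d+1} \to \R^d$, $(x,t) \mapsto x$: the marginal LDP for $X_{\tau_k}/k$ under $\bar{\P}$ (with rate $\bar{I}_1$) is the image of the joint LDP for $(X_{\tau_k}, \tau_k)/k$ (with rate $\bar{I}$) under $\pi$, so the two rate functions must in fact be related by $\bar{I}_1(x) = \inf_{t \in \R} \bar{I}(x,t)$. Since the lemma only asks for one inequality, the short direct argument above suffices; there is essentially no obstacle, as it is a one-line consequence of restricting the dual variable $\l$ to $0$.
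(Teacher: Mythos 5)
Your proof is correct, and it is genuinely simpler than the one in the paper. You argue by pure convex duality: since $\bar{I}(x,t)$ is the Legendre transform of $\bL$ (a supremum over $(\eta,\l)\in\R^{d+1}$ — note the $\inf$ in the paper's displayed definition of $\bar I$ is a typo for $\sup$, as is clear from its use elsewhere, e.g.\ in the proof of Lemma \ref{Iconstant}), restricting the dual variable to the slice $\l=0$ and using $\bL(\eta,0)=\bL_X(\eta)$ from \eqref{eq-061208b} gives $\bar{I}(x,t)\geq \bar{I}_1(x)$ for every $t$, whence the claim. The paper instead argues probabilistically, comparing the LDP lower bound \eqref{Thesis_XtauLDPlb} for $(X_{\tau_n},\tau_n)/n$ on a slab $\{\|x-\xi\|<\delta\}\times\R$ against the Cram\'er upper bound for the marginal $X_{\tau_n}/n$ on the closed ball, then letting $\delta\downarrow 0$ and invoking lower semicontinuity of $\bar I$ and $\bar I_1$. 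Your route avoids the LDP machinery entirely and in particular sidesteps the (minor) bookkeeping about using the weak LDP for the pair on a non-compact slab; it is shorter and more transparent. One small caveat on your closing remark: to upgrade the inequality to an equality via the contraction principle you would need a full LDP with a good rate function (or exponential tightness) for the pair, not merely the weak LDP asserted by \cite[Theorem 6.1.3]{dzLDTA}; but since the lemma only claims the one inequality, this does not affect your proof.
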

\begin{proof}
The large deviation lower bound
\eqref{Thesis_XtauLDPlb} for $(X_{\tau_n}/n, \tau_n/n)$ implies that 
\[
\liminf_{n\ra\infty} \frac{1}{n} \log \bP( \| X_{\tau_n} - \xi n \| <
\d n) \geq -\inf_{\|x-\xi\| < \d, t\in\R} \bar{I}(x,t).
\]
On the other hand, the large deviation upper bound 
for $X_{\tau_n}/n$ implies that
\[
\limsup_{n\ra\infty} \frac{1}{n} \log \bP( \| X_{\tau_n} - \xi n \| 
< \d n) \leq -\inf_{\|x-\xi\| \leq \d} \bar{I}_1(x).
\]
The above two inequalities and the lower semicontinuity of
$\bar I$ and $\bar I_1$ imply that $\bar{I}_1(x)
\leq \inf_{t\in\R} \bar{I}(x,t)$.
\end{proof}

As mentioned above, when $d\geq 2$,
Assumptions \ref{asmUE}, \ref{asmIID}, and \ref{asmT} imply that 
$\bE \tau_1^p < \infty$ for all $p < \infty$. Then, for any 
$\eta \in \mathcal{C}$, by choosing $p$ large enough so 
that $\| \eta \| < \frac{p-1}{p} C_1$ we have that
\[
 \bE \tau_1 e^{\eta \cdot X_{\tau_1}} \leq \left( \bE \tau_1^p \right)^{1/p}
 \left( \bE e^{p/(p-1) \eta\cdot X_{\tau_1}} \right)^{(p-1)/p} < \infty.
\]
Then, for $\eta\in \mathcal{C}$, let $h(\eta) := 
\frac{\bE \tau_1 e^{\eta \cdot X_{\tau_1}}}{\bE 
e^{\eta \cdot X_{\tau_1}}}$, so that $\nabla \bL(\eta, 0) = 
\left( \nabla \bL_X(\eta), h(\eta) \right)$ (where the
derivatives with respect to $\l$ are one sided derivatives as $\l \ra 0^-$). 
\begin{lem} \label{Iconstant}
If $x = \nabla \bL_X(\eta)$ for some $\eta \in \mathcal{C}$, then $\bar{I}(x,t) = \bar{I}_1(x)$ for all $t \geq h(\eta)$. 
\end{lem}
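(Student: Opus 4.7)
The plan is to prove the two inequalities $\bar{I}_1(x) \leq \bar{I}(x,t)$ and $\bar{I}(x,t) \leq \bar{I}_1(x)$. The first is immediate from Lemma \ref{I1I2lb}, or even more directly by restricting the supremum defining $\bar{I}(x,t)$ to $\l=0$, which recovers $\bar{I}_1(x)$. Note also that the hypothesis $x = \nabla \bar{L}_X(\eta)$ together with Legendre duality gives $\bar{I}_1(x) = \eta \cdot x - \bar{L}_X(\eta)$, which will be useful below.

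For the reverse inequality, the heuristic is that $(\eta, 0)$ should be a constrained maximizer of $g(\eta',\l') := (\eta',\l')\cdot(x,t) - \bar{L}(\eta',\l')$. Indeed, the gradient in the $\eta'$-direction vanishes at $(\eta,0)$ because $\nabla_{\eta'} g(\eta,0) = x - \nabla \bar{L}_X(\eta) = 0$, while the left $\l'$-derivative at $\l'=0$ equals $t - h(\eta) \geq 0$. One would wish to increase $\l'$ beyond $0$, but this is forbidden by Lemma \ref{bLnest}, which gives $\bar{L} = +\infty$ on $\{\l'>0\}$.

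To make this rigorous, I would establish the global subgradient inequality
\[
\bar{L}(\eta', \l') \;\geq\; \bar{L}_X(\eta) + x \cdot (\eta' - \eta) + h(\eta)\,\l' \qquad \text{for all } (\eta', \l') \in \R^d \times \R. \qquad (\ast)
\]
For $\l' > 0$ this is trivial by Lemma \ref{bLnest}. For $\l' \leq 0$, the cleanest derivation uses a change of measure: define the tilted law $dQ/d\bP = e^{\eta \cdot X_{\tau_1} - \bar{L}_X(\eta)}$, under which $E_Q X_{\tau_1} = \nabla \bar{L}_X(\eta) = x$ and $E_Q \tau_1 = h(\eta)$ (the tilt is well defined since $\eta \in \mathcal{C}$ ensures $\bar{L}_X(\eta) < \infty$ and $h(\eta)<\infty$). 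Then
\[
\bar{L}(\eta',\l') - \bar{L}_X(\eta) \;=\; \log E_Q \exp\bigl\{(\eta'-\eta)\cdot X_{\tau_1} + \l'\tau_1\bigr\} \;\geq\; E_Q\bigl[(\eta'-\eta)\cdot X_{\tau_1} + \l'\tau_1\bigr]
\]
by Jensen's inequality, which is precisely $(\ast)$.

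Given $(\ast)$, rearranging yields $g(\eta',\l') \leq \bar{I}_1(x) + (t - h(\eta))\l'$ for every $(\eta',\l')$ with $\l' \leq 0$; since $t \geq h(\eta)$ and $\l' \leq 0$, the last term is non-positive, so $g(\eta',\l') \leq \bar{I}_1(x)$. For $\l' > 0$ one has $g = -\infty$. Taking the supremum gives $\bar{I}(x,t) \leq \bar{I}_1(x)$, completing the proof. The main obstacle is producing the subgradient inequality $(\ast)$ at the boundary $\l=0$ of the effective domain of $\bar{L}$, where $\bar L$ is not differentiable in $\l$ (the left derivative is $h(\eta)$, the right is $+\infty$); the change-of-measure/Jensen argument bypasses this nondifferentiability and makes the verification a one-line computation.
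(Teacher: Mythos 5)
Your proof is correct. The overall structure — lower bound is easy (restrict to $\lambda=0$, or invoke Lemma \ref{I1I2lb}), upper bound exploits that $\bL=+\infty$ for $\lambda>0$ (Lemma \ref{bLnest}) — matches the paper, but the way you justify the upper bound is genuinely different and arguably more careful. The paper first identifies $\bar{I}(x,h(\eta))=\bar{I}_1(x)$ by invoking the Legendre duality relation $\bar{I}(\nabla\bL(\eta,0))=(\eta,0)\cdot\nabla\bL(\eta,0)-\bL(\eta,0)$, and then handles $t>h(\eta)$ by a separate monotonicity step $\sup_{\lambda\le 0}[\,\cdots+t\lambda]\le\sup_{\lambda\le 0}[\,\cdots+h(\eta)\lambda]$. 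The duality relation it appeals to was recorded in \eqref{LegendreDuals} only for $(\eta,\lambda)\in\mathcal{D}^\circ_{\bL}$, but $(\eta,0)$ lies on the boundary of $\mathcal{D}_{\bL}$; what is actually needed is that the one-sided gradient $(x,h(\eta))$ is a subgradient of $\bL$ at $(\eta,0)$, which the paper uses implicitly. Your change-of-measure and Jensen computation proves exactly this subgradient inequality $(\ast)$ directly, and then handles all $t\ge h(\eta)$ in one stroke; this makes the boundary issue explicit rather than implicit, and your remark at the end correctly pinpoints why that step deserves a proof. Both arguments are correct; yours is a bit more self-contained and unified, the paper's a bit more terse.
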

\begin{proof}
	Since $\nabla\bL( \eta, 0) = (x,h(\eta))$, we have 
	using \eqref{eq-061208b} that 
\begin{align*}
 \bar{I}(x,h(\eta) ) = (x,h(\eta)) \cdot (\eta, 0) - 
 \bL(\eta, 0) = x \cdot \eta - \bL_X(\eta).
\end{align*}
Similarly, $\nabla \bL_X(\eta) = x$ implies that $\bar{I_1}(x) =
x \cdot \eta - \bL_X(\eta)$. Thus, $\bar{I}(x,h(\eta) ) = \bar{I}_1(x)$. 

If $t> h(\eta)$, then since Lemma \ref{bLnest} implies that $\bL(\eta, \l) = \infty$ for any $\l > 0$,
\begin{align*}
 \bar{I}(x,t) &= \sup_{\eta\in\R^d, \; \l \leq 0} (x,t)\cdot (\eta,\l) 
 - \bL(\eta,\l) \\
&\leq \sup_{\eta\in\R^d, \; \l \leq 0} (x,h(\eta))\cdot (\eta,\l)
- \bL(\eta,\l) \\
&= \bar{I}(x, h(\eta)) = \bar{I}_1(x). 
\end{align*}
This, along with Lemma \ref{I1I2lb} implies that $\bar{I}(x,t) = 
\bar{I}_1(x)$ for all $t\geq h(\eta)$. 
\end{proof}


Let $\mathcal{A}^- := \{ \theta v : \theta \in(0,1), \; v\in \mathcal{A}^0 \}$. 
In \cite{yPaper} (proof of Theorem 3, bottom of page 7), 
Yilmaz shows that the unit vector $\hat{n}$ normal to $\del \mathcal{A}^+$ 
(pointing into $\mathcal{A}^+$) at $\vp$ satisfies $\hat{n} \cdot \vp >0$. 
In fact, this argument gives that for any $v_0 \in \mathcal{A}^0$ the
unit vector $\hat{n}_0$ normal to $\del \mathcal{A}^+$ (pointing into 
$\mathcal{A}^+$) at $v_0$ satisfies $\hat{n}_0 \cdot v_0  >0$. This 
implies that $\mathcal{A}^-$ is an open set and that $\mathcal{A}^-$ 
and $\mathcal{A}^0$ are disjoint. 

\textbf{Remark:}
The above referenced argument of Yilmaz on the shape of $\mathcal{A}^+$ 
appears in a different form in \cite{yThesis} than it does here. 
Yilmaz defines a function $\Lambda_a(\eta)$ to be the Legendre transform
of the large deviation rate function $H(v)$. He then shows that 
the equality $\bL(\eta, -\Lambda_a(\eta)) = 0 $ holds for all 
$\eta \in \mathcal{C}^+$. 
Note that our definition of $\l(\eta)$ implies that $\L_a(\eta) = - 
\l(\eta)$ for all $\eta \in \mathcal{C}^+$, and thus 
\[
 \mathcal{A}^+ = \{ -\nabla \l(\eta): \eta \in\mathcal{C}^+ \} = 
 \{ \nabla \L_a(\eta): \eta \in\mathcal{C}^+ \}. 
\]
Since Yilmaz's proof of the properties of the normal vectors at points 
in $\mathcal{A}^0$ only uses the fact that $\bL(\eta, -\L_a(\eta)) = 0$, 
it may be repeated here with $-\l(\eta)$ in place of $\L_a(\eta)$.

We wish to identify the shape of the function $\bar{J}$ on the set $\mathcal{A}^-$ as well. For this we first need the following lemma.
\begin{lem}\label{J1bar}
 Let $\bar{J}_1(v) := \inf_{s>0} s \bar{I}_1(v/s)$. Then $\bar{J}_1(v)
 \leq \bar{J}(v)$ for all $v$, and $\bar{J}_1(c v) = c \bar{J}_1(v)$ for all $c>0$. 
Moreover, if $v_0 = \gamma(\eta_0)$ for some $\eta_0 \in \mathcal{C}^0$ and 
$c>0$, then $\bar{J}_1(v)$ is analytic in a neighborhood of $cv_0$.
\end{lem}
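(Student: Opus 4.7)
The plan is to treat the three claims in turn; the first two are short, while the analyticity is the real content. For $\bar{J}_1 \leq \bar{J}$, Lemma \ref{I1I2lb} gives $\bar{I}_1(v/s) \leq \bar{I}(v/s, 1/s)$ for every $s > 0$, so $s\bar{I}_1(v/s) \leq s\bar{I}(v/s, 1/s)$ for every $s\in(0,1]$; infimizing the right-hand side over $s\in(0,1]$ and the left-hand side over the larger range $s > 0$ yields the claim. The 1-homogeneity $\bar{J}_1(cv) = c\bar{J}_1(v)$ follows from the substitution $s = ct$ in the defining infimum.

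For the analyticity, set $f_1(v,s) := s\bar{I}_1(v/s)$; as the perspective of the convex function $\bar{I}_1$, $f_1$ is jointly convex on $H_\ell \times (0,\infty)$. Given $v_0 = \gamma(\eta_0)$ with $\eta_0 \in \mathcal{C}^0$, I would take as a candidate minimizer $s_0 := 1/h(\eta_0)$. Using $\l(\eta_0) = 0$, a short computation gives
\[
 \frac{v_0}{s_0} = h(\eta_0)\gamma(\eta_0) = \frac{\bE X_{\tau_1} e^{\eta_0 \cdot X_{\tau_1}}}{\bE e^{\eta_0 \cdot X_{\tau_1}}} = \nabla \bL_X(\eta_0),
\]
so by Legendre duality $\nabla \bar{I}_1(v_0/s_0) = \eta_0$. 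The Legendre identity $\bar{I}_1(y) - \nabla \bar{I}_1(y) \cdot y = -\bL_X(\nabla \bar{I}_1(y))$ then yields
\[
 \frac{\del f_1}{\del s}(v,s) = -\bL_X\bigl(\nabla \bar{I}_1(v/s)\bigr), \qquad \frac{\del^2 f_1}{\del s^2}(v,s) = \frac{1}{s^3}\, v^{T} D^2\bar{I}_1(v/s)\, v,
\]
so the first derivative vanishes at $(v_0, s_0)$ precisely because $\eta_0 \in \mathcal{C}^0$. Uniform ellipticity makes $\bL_X$ analytic and strictly convex on $\mathcal{C}$, so Lemma \ref{Appendix_analytic} gives that $\bar{I}_1$ is analytic and strictly convex on the open set $\nabla \bL_X(\mathcal{C})$, which contains $v_0/s_0$; consequently $D^2\bar{I}_1(v_0/s_0)$ is positive definite and $\del^2_s f_1(v_0,s_0) > 0$. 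The analytic implicit function theorem then supplies an analytic map $s(v)$ on a neighborhood $U$ of $v_0$ with $s(v_0) = s_0$ and $\del_s f_1(v, s(v)) \equiv 0$; joint convexity of $f_1$ together with strict convexity in $s$ ensures that $s(v)$ is the unique global minimizer on $(0,\infty)$, so $\bar{J}_1(v) = f_1(v, s(v))$ is analytic on $U$.

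Finally, analyticity near $cv_0$ for arbitrary $c > 0$ follows from the 1-homogeneity already established: on the scaled neighborhood $cU$ of $cv_0$ one has $\bar{J}_1(w) = c\bar{J}_1(w/c)$, and the right-hand side is manifestly analytic in $w$. The main obstacle is confirming that the critical point produced by the implicit function theorem is genuinely the global infimum rather than a spurious local minimum or saddle; joint convexity of $f_1$ together with strict positivity of $\del^2_s f_1$ near $(v_0, s_0)$ rules this out. A secondary point worth verifying is the smoothness of $\bar{I}_1$ at $v_0/s_0$, which holds because $\eta_0$ lies in the interior of $\mathcal{C}$ and $\bL_X$ is an analytic, strictly convex function there.
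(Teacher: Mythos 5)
Your proposal is correct and follows essentially the same route as the paper's proof: the same use of Lemma \ref{I1I2lb} for the inequality, the same substitution for homogeneity, and the same perspective-function/implicit-function-theorem argument with the candidate minimizer $s_0 = 1/h(\eta_0)$ and the identity $v_0/s_0 = \nabla\bL_X(\eta_0)$, invoking Lemma \ref{Appendix_analytic} for the analyticity and strict convexity of $\bar{I}_1$ near $v_0/s_0$. The only cosmetic difference is that you make explicit the Legendre-duality rewriting $\del_s f_1 = -\bL_X(\nabla\bar{I}_1(v/s))$, whereas the paper works directly with $\bar{I}_1(v/s) - \nabla\bar{I}_1(v/s)\cdot(v/s)$; the content is identical.
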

\begin{proof}
 Since $\bar{I}_1(x) \leq \inf_t \bar{I}(x,t)$, it follows immediately from the definitions of $\bar{J}$ and $\bar{J}_1$ that $\bar{J}_1(v) \leq \bar{J}(v)$. Also, if $c>0$, then 
\[
 \bar{J}_1(cv) = \inf_{s>0} s \bar{I}_1\left( \frac{cv}{s} \right) = c \inf_{s>0} (s/c) \bar{I}_1\left( \frac{v}{s/c} \right) = c \inf_{s'>0} s' \bar{I}_1\left( \frac{v}{s'} \right) = c \bar{J}_1(v). 
\]
Let $f_1(v,s) := s \bar{I}_1(v/s)$, so that $\bar{J}_1(v) = 
\inf_{s>0} f_1(v,s)$. Since $\bar{I}_1$ is a convex function,
$f_1(v,s)$ is a convex function of $(v,s)$. 
Let $v_0 \in \mathcal{A}^0$ so that $v_0 = \gamma(\eta_0) = \frac{ 
\nabla \bL_X(\eta_0)}{h(\eta_0)}$  for some $\eta_0 \in \mathcal{C}^0$. 
As in the proof of Lemma \ref{nnanalytic}, to show that $\bar{J}_1$ is
analytic in a neighborhood of $v_0$, by the implicit function theorem it 
is enough to show that there exists an $s_0$ such that $f(v,s)$ is 
analytic in a neighborhood of $(v_0,s_0)$, $\frac{\del f_1}{\del s}
(v_0, s_0) = 0$, and $\frac{\del^2 f_1}{\del s^2}(v_0,s_0) \neq 0$. 
If $s_0 = \frac{1}{h(\eta_0)}$, then $v_0/s_0 = \nabla \bL_X(\eta_0)$.
Since $\bL_X$ is analytic and strictly convex in a neighborhood of $\eta_0$,
it follows that $\bar{I}_1(x)$ is analytic and strictly convex in a
neighborhood of $v_0/s_0 = \nabla \bL_X(\eta_0)$
(see Lemma \ref{Appendix_analytic} in the Appendix). 
Thus, $f_1(v,s)$ is analytic in a neighborhood of $(v_0, s_0)$. 
When $\bar{I}_1$ is twice differentiable at $v/s$, then 
\begin{equation}
 \frac{\del f_1}{\del s} (v,s) = \bar{I}_1\left( \frac{v}{s} \right) - 
 \nabla \bar{I}_1\left( \frac{v}{s} \right) \cdot \left( \frac{v}{s} \right),  \label{df1ds}
\end{equation}
and
\begin{equation}
\frac{\del^2 f_1}{\del s^2} (v,s) = \frac{1}{s^3} v \cdot D^2 \bar{I}_1
\left( \frac{v}{s} \right) \cdot v^t. \label{PosDefI}
\end{equation}
Since $\nabla \bL_X(\eta_0) = v_0/s_0 $, we obtain that $\bar{I}_1(v_0/s_0)
= \eta_0 \cdot (v_0/s_0) - \bL_X(\eta_0)$ and thus \eqref{df1ds} implies
\begin{equation}
 \frac{\del f_1}{\del s} (v_0, s_0) =  \bar{I}_1\left( \frac{v_0}{s_0}
 \right) - \eta_0 \cdot \left( \frac{v_0}{s_0} \right) = -\bL_X(\eta_0) = 0, \label{df1zero}
\end{equation}
where the last equality is because $\eta_0 \in \mathcal{C}^0$. Also, since 
$\bar{I}(x)$ is strictly convex in a neighborhood of $v_0/s_0$,
$D^2\bar{I}_1(v_0/s_0)$ is strictly positive definite, and 
thus \eqref{PosDefI} implies that $\frac{\del^2 f_1}{\del s^2} (v_0, s_0) > 0$. 
Therefore, $\bar{J}_1$ is analytic in a neighborhood of $v_0$. Since
$\bar{J}_1(c v ) = c \bar{J}_1(v)$ for all $c>0$, this implies that
$\bar{J}_1$ is also analytic in a neighborhood of $c v_0$ for any $c>0$.
\end{proof}

\begin{lem}\label{bJnest2}
 Let $P$ be  nestling. Then, $\bar{J}(v) = \bar{J}_1(v)$ for all 
 $v \in \mathcal{A}^0 \cup \mathcal{A}^-$, and so
 $\bar{J}(v)$ is analytic and $1$-homogeneous on the open set $\mathcal{A}^-$. 
Moreover, if 
for some $\eta_0 \in \mathcal{C}^0$, 
\begin{equation}\label{vsnest2}
 v_0 = \gamma(\eta_0) = \frac{ \bE X_{\tau_1} e^{\eta_0 \cdot X_{\tau_1} } }{
 \bE \tau_1 e^{\eta_0 \cdot X_{\tau_1} } } \qquad \text{and} \qquad
s_0 = \frac{1}{h(\eta_0)} = \frac{1}{\bE \tau_1 e^{\eta_0 \cdot X_{\tau_1} }},
\end{equation}
then for any $\theta \in (0,1]$, 
\[
 \bar{J}(\theta v_0) = \theta s_0 \bar{I}_1\left( \frac{v_0}{s_0}\right) =
 \theta s_0 \bar{I} \left( \frac{v_0}{s_0}, \frac{1}{\theta s_0} \right), 
 \qquad  \qquad \nabla \bar{J}( \theta v_0) = \eta_0,
\]
and $\theta s_0$ is the unique value of $s$ which attains the minimum in
the definition of $\bar{J}(\theta v_0)$.
\end{lem}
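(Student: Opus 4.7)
The strategy is to reduce everything to the identity $\bar{J}(v) = \bar{J}_1(v)$ on $\mathcal{A}^0 \cup \mathcal{A}^-$; once this is in hand, the analyticity and homogeneity on the open set $\mathcal{A}^-$ follow directly from Lemma \ref{J1bar}, and the explicit formulas for $\bar{J}(\theta v_0)$ drop out with the minimizer $s = \theta s_0$. Fix $v_0 = \gamma(\eta_0) \in \mathcal{A}^0$ with $\eta_0 \in \mathcal{C}^0$, so that $\bL_X(\eta_0) = 0$ and $\l(\eta_0) = 0$, and set $s_0 = 1/h(\eta_0) = 1/\bE[\tau_1 e^{\eta_0 \cdot X_{\tau_1}}]$. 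Since $\tau_1 \geq 1$ and $\bE[e^{\eta_0 \cdot X_{\tau_1}}] = e^{\bL_X(\eta_0)} = 1$, one has $1/s_0 \geq 1$, with strict inequality by uniform ellipticity (positive probability that $\tau_1 > 1$); hence $\theta s_0 < 1$ for every $\theta \in (0,1]$, and the candidate minimizer remains inside the interval $(0,1]$ over which $\bar{J}$ is defined.

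First I would compute $\bar{J}_1$ along the ray $\{\theta v_0 : \theta > 0\}$. The dual formula $s \bar{I}_1(v/s) = \sup_\eta [\eta \cdot v - s \bL_X(\eta)]$ evaluated at $\eta = \eta_0$ with $\bL_X(\eta_0) = 0$ yields the uniform lower bound $s \bar{I}_1(v/s) \geq \eta_0 \cdot v$; at $s = s_0$ the bound is saturated, since $v_0/s_0 = \nabla \bL_X(\eta_0)$ gives $\bar{I}_1(v_0/s_0) = \eta_0 \cdot (v_0/s_0)$, so $\bar{J}_1(v_0) = \eta_0 \cdot v_0$, and by homogeneity $\bar{J}_1(\theta v_0) = \theta \eta_0 \cdot v_0$ for $\theta > 0$. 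Second, for the matching upper bound on $\bar{J}$, substitute $s = \theta s_0 \in (0,1)$ into the definition of $\bar{J}(\theta v_0)$: then $\theta v_0/s = v_0/s_0 = \nabla \bL_X(\eta_0)$ and $1/s = 1/(\theta s_0) \geq 1/s_0 = h(\eta_0)$, so Lemma \ref{Iconstant} collapses $\bar{I}(\theta v_0/s, 1/s)$ to $\bar{I}_1(v_0/s_0)$, yielding $\bar{J}(\theta v_0) \leq \theta s_0 \bar{I}_1(v_0/s_0) = \theta \eta_0 \cdot v_0 = \bar{J}_1(\theta v_0)$. Combined with $\bar{J}_1 \leq \bar{J}$ (Lemma \ref{J1bar}), this forces equality, proving both the identity on $\mathcal{A}^0 \cup \mathcal{A}^- = \{\theta v_0 : v_0 \in \mathcal{A}^0,\, \theta \in (0,1]\}$ and the explicit formula $\bar{J}(\theta v_0) = \theta s_0 \bar{I}_1(v_0/s_0) = \theta s_0 \bar{I}(v_0/s_0, 1/(\theta s_0))$.

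For uniqueness of the minimizer, any other $s' \in (0,1]$ achieving $\bar{J}(\theta v_0)$ forces equalities in $s' \bar{I}(\theta v_0/s', 1/s') \geq s' \bar{I}_1(\theta v_0/s') \geq \theta \eta_0 \cdot v_0$; the right inequality, read as $\eta_0$ attaining the supremum in $\sup_\eta[\eta \cdot \theta v_0 - s' \bL_X(\eta)]$, together with strict convexity of $\bL_X$ (Assumption \ref{asmUE}), forces $\nabla \bL_X(\eta_0) = \theta v_0/s'$, hence $s' = \theta s_0$. For the gradient: when $\theta \in (0,1)$, $\theta v_0$ is an interior point of $\mathcal{A}^-$ where $\bar{J} = \bar{J}_1$, so the implicit-function argument of Lemma \ref{J1bar} gives $\nabla \bar{J}(\theta v_0) = \nabla \bar{J}_1(\theta v_0) = \nabla \bar{I}_1(v_0/s_0) = \eta_0$ via Legendre duality. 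For $\theta = 1$, $\bar{J}_1$ remains analytic at $v_0$ with gradient $\eta_0$ by Lemma \ref{J1bar}, so $\eta_0 \in \partial \bar{J}(v_0)$ because $\bar{J}_1 \leq \bar{J}$ with equality at $v_0$; the subgradient is a singleton because approach from $\mathcal{A}^+$ via Lemma \ref{bJnest} (where $\nabla \bar{J}(\gamma(\eta)) = \eta \to \eta_0$) and from $\mathcal{A}^-$ both give the same limit $\eta_0$.

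The main obstacle is that Lemma \ref{Iconstant} only provides $\bar{I}(x,t) = \bar{I}_1(x)$ on the half-line $t \geq h(\eta)$, so one must verify that the candidate $s = \theta s_0$ actually satisfies $1/s \geq h(\eta_0)$. The constraint $\theta \leq 1$ is exactly what ensures this, explaining why the identity $\bar{J} = \bar{J}_1$ extends inward from $\mathcal{A}^0$ (along $\mathcal{A}^-$) toward the origin and not outward into $\mathcal{A}^+$, where $\bar{J}$ is known to be strictly above the homogeneous $\bar{J}_1$ by Lemma \ref{bJnest}.
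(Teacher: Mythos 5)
Your proof is correct and follows the paper's overall strategy: establish $\bar{J}=\bar{J}_1$ on $\mathcal{A}^0\cup\mathcal{A}^-$ via Lemma~\ref{Iconstant} and the inequality $\bar{J}_1\le\bar{J}$, then transfer the properties of $\bar{J}_1$. Within that framework, your local arguments diverge from the paper in a few places. You compute $\bar{J}_1(\theta v_0)=\theta\,\eta_0\cdot v_0$ directly from the dual representation (lower bound $s\bar{I}_1(v/s)\ge\eta_0\cdot v$ with saturation at $s=s_0$), whereas the paper shows $\tfrac{\partial f_1}{\partial s}(\theta v_0,\theta s_0)=0$ and invokes convexity; these are interchangeable. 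For uniqueness of the minimizer you force $\nabla\bL_X(\eta_0)=\theta v_0/s'$ through the equality case in the dual supremum and strict convexity of $\bL_X$, while the paper uses local strict convexity of $f_1$ in $s$ via \eqref{PosDefI} combined with $f_1\le f$; both are sound. The most substantive deviation is the gradient at the boundary point $\theta=1$: the paper applies Lemma~\ref{Jderivative} at $(v_0,s_0)$, which requires $(v_0/s_0,1/s_0)=\nabla\bL(\eta_0,0)\in\mathcal{D}^\circ_{\bar{I}}$; you instead note that $\eta_0$ is a subgradient (since $\bar{J}_1\le\bar{J}$ touches at $v_0$ with $\nabla\bar{J}_1(v_0)=\eta_0$) and conclude the subdifferential is a singleton because $\nabla\bar{J}$, computed on $\mathcal{A}^+$ via Lemma~\ref{bJnest} and on $\mathcal{A}^-$ via $\bar{J}_1$, converges to $\eta_0$ along a set of full measure near $v_0$. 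This is a valid alternative route resting on the characterization of $\partial\bar{J}(v_0)$ as the convex hull of gradient limit points (Rockafellar, Thm.~25.6); you should cite that explicitly since without it the claim ``the subgradient is a singleton because limits from both sides agree'' is only a heuristic. Finally, your observation that $s_0=1/\bE[\tau_1 e^{\eta_0\cdot X_{\tau_1}}]\le 1$ (so that $\theta s_0$ lies in the admissible interval $(0,1]$ for the infimum defining $\bar{J}$) is a worthwhile detail the paper leaves implicit but genuinely relies on.
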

\begin{proof}
Let $v_0$ and $s_0$ be defined as in \eqref{vsnest2} for some 
$\eta_0\in \mathcal{C}^0$. Recalling that $f_1(v,s)=s\bar{I}_1(v/s)$, then
\[
 \frac{\del f_1}{\del s}( \theta v_0, \theta s_0 )
 =\frac{\del f_1}{\del s}( v_0, s_0 ) =0,
\]
where the first equality holds because $\frac{\del f_1}{\del s}(v,s)$ 
depends only on $v/s$ by \eqref{df1ds}, and the second equality 
follows from
\eqref{df1zero}.
Therefore, $\bar{J}_1(\theta v_0) = f_1(\theta v_0, \theta s_0) = 
\theta s_0 \bar{I}_1(v_0/s_0)$. 
However, since $v_0/s_0 = \nabla \bL_X(\eta_0)$ and  
$ h(\eta_0) = 1/s_0 \leq 1/(\theta s_0)$ for any $\theta \in(0,1]$, 
we have by Lemma \ref{Iconstant} that 
\[
\theta s_0 \bar{I}_1\left( \frac{v_0}{s_0} \right) = \theta s_0
\bar{I}\left( \frac{v_0}{s_0}, \frac{1}{\theta s_0} \right) =
\theta s_0 \bar{I}\left( \frac{\theta v_0}{\theta s_0}, \frac{1}{\theta s_0}
\right) . 
\]
Thus, $\bar{J}_1(\theta v_0) \geq \bar{J}(\theta v_0)$. 
Since $\bar{J}_1(v) \leq \bar{J}(v)$ for all $v$, this implies
that $\bar{J}(v) = \bar{J}_1(v)$ for all
$v \in \mathcal{A}^0 \cup \mathcal{A}^-$. 
As in the proof of Lemma \ref{nnanalytic}, since $(v_0/s_0, 1/s_0) = 
\nabla \bL(\eta_0, 0)$ is in the interior of $\mathcal{D}_{\bar{I}}$, we can 
apply Lemma \ref{Jderivative} to show that $\nabla \bar{J}(v_0) = \eta_0$. 
Since $\bar{J}(\theta v_0) = \theta \bar{J}(v_0)$ for all $\theta \in(0,1]$ 
this implies that $\nabla \bar{J}(\theta v_0) = \eta_0$ as well. 

Since $v_0/s_0 = \nabla \bL_X(\eta_0)$, $\bar{I}_1$ is strictly convex 
in a neighborhood of $v_0/s_0$, and thus \eqref{PosDefI} 
implies that $f_1(v,s)$ is strictly convex in $s$ in a neighborhood of 
$(\theta v_0, \theta s_0)$. Therefore, $\theta s_0$ is the unique minimizing value of $s$ 
in the definition of $\bar{J}_1(\theta v_0)$. Since $f_1(v,s) \leq f(v,s)$, 
this implies that $\theta s_0$ is the unique minimizing value of $s$ 
in the definition of $\bar{J}(\theta v_0)$ as well. 
\end{proof}

\begin{cor}\label{Jzeroset}
 If $P$ is nestling, then $\bar{J}( \theta \vp ) = 0$
 for all $\theta\in (0,1]$.
\end{cor}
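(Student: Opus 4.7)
The plan is to deduce the corollary as an immediate specialization of Lemma~\ref{bJnest2} to the tilt $\eta_0 = \mathbf{0}$.

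First, I would check that $\mathbf{0}$ qualifies as an admissible $\eta_0$, i.e.\ that $\mathbf{0} \in \mathcal{C}^0$: this is automatic since $\bL_X(\mathbf{0}) = \log \bE e^{\mathbf{0}\cdot X_{\tau_1}} = 0$ and $\mathbf{0}$ lies trivially in $\mathcal{C}$. Substituting $\eta_0 = \mathbf{0}$ (so that $\l(\mathbf{0}) = 0$) into \eqref{vsnest2}, and using \eqref{vPregenformula}, collapses the formulas to $v_0 = \bE X_{\tau_1}/\bE \tau_1 = \vp$ and $s_0 = 1/\bE \tau_1$. In particular $v_0/s_0 = \bE X_{\tau_1}$, which is precisely the mean of the \iid regeneration increments, so Cram\'er's theorem (applied to $\bar{I}_1$) forces $\bar{I}_1(v_0/s_0) = 0$.

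The second step is then to read off the claim directly from the displayed identity of Lemma~\ref{bJnest2}: for every $\theta \in (0,1]$,
\[
\bar{J}(\theta \vp) \;=\; \theta s_0\, \bar{I}_1\!\left(\frac{v_0}{s_0}\right) \;=\; \frac{\theta}{\bE \tau_1}\cdot 0 \;=\; 0,
\]
which is the assertion. The minimizing value $s = \theta s_0 = \theta / \bE \tau_1$ lies in $(0,1]$ because $\bE \tau_1 \geq 1$, so the constraint $s\leq 1$ built into the definition of $\bar{J}$ is never active and poses no difficulty.

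No substantive obstacle arises; once Lemma~\ref{bJnest2} is in place the corollary is a one-line substitution, and the only detail meriting attention is the verification that $\mathbf{0} \in \mathcal{C}^0$, which is immediate from $\bE e^{\mathbf{0}\cdot X_{\tau_1}} = 1$.
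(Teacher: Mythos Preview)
Your argument is correct and follows essentially the same route as the paper: both proofs specialize Lemma~\ref{bJnest2} at $\eta_0=\mathbf{0}$, noting that $\mathbf{0}\in\mathcal{C}^0$ and $v_0=\gamma(\mathbf{0})=\vp$. The only cosmetic difference is that the paper invokes the $1$-homogeneity conclusion of Lemma~\ref{bJnest2} to write $\bar{J}(\theta\vp)=\theta\bar{J}(\vp)$ and then cites Lemma~\ref{bJsimleprop} for $\bar{J}(\vp)=0$, whereas you plug directly into the displayed formula and observe $\bar{I}_1(\bE X_{\tau_1})=0$; these are two phrasings of the same computation.
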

\begin{proof}
Since $\vp = \gamma(\mathbf{0}) \in \mathcal{A}^0$, 
Lemma \ref{bJnest2} implies that $\bar{J}(\theta \vp) = \theta \bar{J}(\vp)$. 
However, $\bar{J}(\vp) = 0$ by Lemma \ref{bJsimleprop}. 
\end{proof}

\begin{cor} \label{bJnest3}
 If $P$ is nestling, then $\bar{J}(v)$ is continuously differentiable 
 on the open set $\mathcal{A}:= \mathcal{A}^- 
 \cup \mathcal{A}^0 \cup\mathcal{A}^+$, 
 and $\| \nabla \bar{J}(v) \| < C_1$ for all $v \in \mathcal{A}$.
\end{cor}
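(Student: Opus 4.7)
The plan is to leverage the analyticity of $\bar{J}$ on $\mathcal{A}^+$ and on $\mathcal{A}^-$ already established in Lemmas \ref{bJnest} and \ref{bJnest2}, together with a convexity argument to handle the interface $\mathcal{A}^0 \subset \del \mathcal{A}^+$. Since $\bar{J}$ is analytic on both $\mathcal{A}^+$ and $\mathcal{A}^-$, it is trivially continuously differentiable on the open set $\mathcal{A}^+ \cup \mathcal{A}^-$, with $\nabla \bar{J}$ equal on each piece to the corresponding parameter $\eta \in \mathcal{C}^+ \cup \mathcal{C}^0$ produced by the Legendre duality.

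The crux of the proof is establishing differentiability at a point $v_0 \in \mathcal{A}^0$. For this I would use Lemma \ref{bJnest2} in the $\theta = 1$ case: its proof invokes Lemma \ref{Jderivative} to produce all the partial derivatives $\frac{\del \bar{J}}{\del v_i}(v_0) = (\eta_0)_i$, where $v_0 = \gamma(\eta_0)$ for the unique $\eta_0 \in \mathcal{C}^0$. Because $\bar{J}$ is convex on the open half-space $H_\ell$ (Lemma \ref{bJsimleprop}) and $\mathcal{A} \subset H_\ell$, the classical theorem for convex functions (see \cite[Theorem 25.2]{rConvex}) that existence of all partial derivatives at a point forces total differentiability there now applies and yields that $\bar{J}$ is differentiable at $v_0$ with $\nabla \bar{J}(v_0) = \eta_0$.

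Having shown that $\bar{J}$ is differentiable at every point of the open set $\mathcal{A}$, continuous differentiability of $\bar{J}$ on $\mathcal{A}$ follows immediately from another classical result (see \cite[Theorem 25.5]{rConvex}): a convex function differentiable throughout an open set is automatically $C^1$ there. The gradient bound then holds because at every $v \in \mathcal{A}$ the identified gradient $\nabla \bar{J}(v)$ equals some $\eta \in \mathcal{C}^+ \cup \mathcal{C}^0 \subset \mathcal{C} = \{ \eta \in \R^d : \|\eta\| < C_1\}$, so $\|\nabla \bar{J}(v)\| < C_1$. The only subtle step is the differentiability at $\mathcal{A}^0$, where neither analyticity lemma applies directly to a full neighborhood of $v_0$ in $\mathbb{R}^d$; the convexity of $\bar{J}$ on $H_\ell$ is precisely what bridges the gap between existence of partials and full differentiability.
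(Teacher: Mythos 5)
Your proof is correct and proceeds in the same spirit as the paper's one-line justification, which appeals to the gradient formulas in Lemmas~\ref{bJnest} and~\ref{bJnest2} together with the continuity and injectivity of $\gamma$, but you fill in the convex-analysis machinery more carefully. Where the paper's ``$\gamma$ continuous and one-to-one'' implicitly asks the reader to extract continuity of $\gamma^{-1}$ on $\mathcal{A}^+ \cup \mathcal{A}^0$ and of the radial projection onto $\mathcal{A}^0$ for $v\in\mathcal{A}^-$, you instead invoke the soft fact that a convex function differentiable throughout an open set automatically has a continuous gradient there (Rockafellar, Theorem~25.5), which is cleaner and avoids those delicate points. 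Your treatment of the interface $\mathcal{A}^0$ via Lemma~\ref{Jderivative}, which produces two-sided partial derivatives, combined with convexity of $\bar{J}$ to upgrade to full differentiability, is exactly what is needed; the paper's proofs of Lemmas~\ref{bJnest} and~\ref{bJnest2} write ``$\nabla\bar{J}(v_0)=\eta_0$'' without making this upgrade explicit. One small citation correction: the result that a finite convex function possessing all two-sided partial derivatives at a point is actually differentiable there is Rockafellar's Theorem~25.1, not Theorem~25.2 (the latter is the equivalence of differentiability with uniqueness of the subgradient).
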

\begin{proof}
This is a direct application of the formulas given for $\nabla\bar{J}(v)$ in 
Lemmas \ref{bJnest} and \ref{bJnest2} and the fact that $\gamma(\eta)$ 
is continuous and one-to-one on $\mathcal{C}^+ \cup \mathcal{C}^0$.  
\end{proof}
\end{subsection}
\end{section}

\begin{section}{LDP Lower Bound}\label{LDPlowerbound}
We now prove, in both the nestling and non-nestling cases,
the large deviation lower bound.
\begin{prop}[Lower Bound] \label{Thesis_ldplb}
Let Assumptions \ref{asmUE}, \ref{asmIID}, and \ref{asmT} hold. For any $v \in H_\ell$, 
\[
\lim_{\d\ra 0} \liminf_{n\ra\infty} \frac{1}{n} \log \P( \| X_n - nv \| < n \d ) \geq - \bar{J}(v).
\]
\end{prop}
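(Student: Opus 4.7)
The plan is to make the heuristic \eqref{LDapprox} precise. It suffices to show that for each $s \in (0,1]$ with $s\bar{I}(v/s, 1/s) < \infty$,
\[
\lim_{\delta \to 0} \liminf_{n\to\infty} \frac{1}{n}\log \P(\|X_n - nv\| < n\delta) \geq -s\,\bar{I}\bigl(v/s,\, 1/s\bigr),
\]
and then to take the supremum of the right-hand side over $s\in(0,1]$, which by definition equals $-\bar{J}(v)$. If $\bar{J}(v) = \infty$ the claim is vacuous, so this reduction suffices.

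Given such an $s$, I will set $k = \lfloor sn\rfloor$ and, for an auxiliary $\delta'>0$ to be chosen, consider the event
\[
E_n := \bigl\{\tfrac{1}{k}(X_{\tau_k}, \tau_k) \in B\bigl((v/s, 1/s), \delta'\bigr)\bigr\}.
\]
On $E_n$ the triangle inequality gives $\|X_{\tau_k} - nv\| \leq k\delta' + \|v\|/s$ and $|\tau_k - n| \leq k\delta' + 1/s$. Since the walk takes nearest-neighbor steps, $\|X_n - X_{\tau_k}\| \leq |n - \tau_k|$, and combining these bounds yields $\|X_n - nv\| \leq 2n\delta' + O(1)$. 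Choosing $\delta' < \delta/3$ and $n$ sufficiently large therefore forces $E_n \subset \{\|X_n - nv\| < n\delta\}$, so
\[
\P(\|X_n - nv\| < n\delta) \geq \P(E_n \cap D) = \P(D)\,\bP(E_n),
\]
using $\bP(\,\cdot\,) = \P(\,\cdot \,|\, D)$.

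Since $\P(D) > 0$ by Theorem \ref{Thesis_regindep}, the (Cram\'er-type) lower bound in \eqref{Thesis_XtauLDPlb} applied to the open convex ball $G = B((v/s, 1/s), \delta')$ gives
\[
\liminf_{n\to\infty}\frac{1}{k}\log \bP(E_n) \geq -\inf_{(x,t)\in G}\bar{I}(x,t) \geq -\bar{I}(v/s, 1/s).
\]
Since $k/n \to s$ and $\frac{1}{n}\log \P(D) \to 0$, this yields
\[
\liminf_{n\to\infty}\frac{1}{n}\log \P(\|X_n - nv\| < n\delta) \geq -s\bar{I}(v/s, 1/s).
\]
The right-hand side is independent of $\delta$, so sending $\delta \to 0$ and then optimizing over $s \in (0,1]$ completes the argument.

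The only delicate ingredient is the deterministic comparison $\|X_n - X_{\tau_k}\| \leq |n - \tau_k|$, which is what lets us pass between the random time $\tau_k \approx n$ and the deterministic time $n$ at no probabilistic cost. Without this nearest-neighbor bound one would need a separate estimate --- most likely via uniform ellipticity --- on the short-time fluctuations of $X$ between $\tau_k$ and $n$; with it, the rest is bookkeeping combining the i.i.d.\ regeneration structure with Cram\'er's theorem.
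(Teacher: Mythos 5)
Your proposal is correct and follows essentially the same route as the paper's proof: both reduce to the conditioned measure $\bP$, both use the nearest-neighbor bound $\|X_n - X_{\tau_k}\| \le |n - \tau_k|$ to pass from the random time $\tau_k$ to the deterministic time $n$, both apply the Cram\'er-type lower bound \eqref{Thesis_XtauLDPlb} for the i.i.d.\ regeneration increments, and both conclude by optimizing over the regeneration fraction (you write it as $s \in (0,1]$, the paper as $t = 1/s \ge 1$). The only cosmetic differences are your choice of Euclidean rather than $L^1$ norm and the explicit $O(1)$ bookkeeping in the triangle-inequality step; neither changes the substance.
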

\begin{proof}
Let $\| \xi \|_1$ denote the $L^1$ norm of the vector $\xi$. 
Then, it is enough to prove the statement of the proposition with 
$\| \cdot \|_1$ in place of $\| \cdot \|$. 
Also, since $\P( \| X_n - nv \|_1 < n \d ) 
\geq \P(D) \bP( \| X_n - nv \|_1 < n \d )$, 
it is enough to prove the statement of the proposition with $\bP$ 
in place of $\P$. That is, it is enough to show
\[
\lim_{\d\ra 0} \liminf_{n\ra\infty} \frac{1}{n} \log 
\bP( \| X_n - nv \|_1 < n \d ) \geq - \bar{J}(v).
\] 

Now, for any $\d>0$ and any integer $k$, since the walk is a nearest neighbor 
walk,
\[
\bP( \| X_n - nv \|_1 < 4 n\d ) \geq \bP\left( \| X_{\tau_k} - 
nv \|_1 < 2 n \d, \; | \tau_k - n | < 2n\d \right). 
\]
For any $t\geq 1$, let $k_n=k_n(t):= \lfloor n/t \rfloor$, 
so that $n - t < k_n t \leq n$ for all $n$. 
Thus, for any $\d>0$ 
and $t\geq 1$, and for all $n$ large enough (so that $n\d > t$), 
\begin{align*}
\bP( \| X_n - nv \|_1 < 4 n\d ) &\geq \bP\left( \| X_{\tau_{k_n}} - 
nv \|_1 < 2 n \d, \; | \tau_{k_n} - n | < 2n\d \right)
\\
&\geq \bP\left( \| X_{\tau_{k_n}} - k_n t v\|_1 <
k_n t \d, \; | \tau_{k_n} - k_n t | < k_n t \d \right).
\end{align*}
Therefore, for any $\d>0$ and $t\geq 1$,
\begin{align*}
&\liminf_{n\ra\infty} \frac{1}{n} \log \bP( \| X_n - nv \|_1 < 4 n\d )  \\
&\qquad \geq \liminf_{n\ra\infty} \frac{1}{n} \log \bP\left( \|
X_{\tau_{k_n}} - k_n t v\|_1 <
k_n t \d, \; | \tau_{k_n} - k_n t | < k_n t \d \right) \\
&\qquad \geq \frac{1}{t} \liminf_{n\ra\infty} \frac{1}{k_n} \log
\bP\left( \| X_{\tau_{k_n}} - k_n t v\|_1 <  k_n t \d, \; | \tau_{k_n}
- k_n t | < k_n t \d \right) \\
&\qquad = \frac{1}{t} \liminf_{k\ra\infty} \frac{1}{k} \log
\bP\left( \| X_{\tau_{k}} - k t v\|_1 <  k t \d, \; |
\tau_{k} - k t | < k t \d \right)\\
&\qquad = -\frac{1}{t} \inf_{ \substack{  \| x - tv\|_1 <
t\d \\  | y - t | < t \d } } \bar{I}(x,y),
\end{align*}
where the last equality is from \eqref{Thesis_XtauLDPlb}. 
Taking $\d\ra 0$ we get that for any $t\geq 1$, 
\[
\lim_{\d\ra 0} \liminf_{n\ra\infty} \frac{1}{n} \log
\bP( \| X_n - nv \|_1 < 4 n\d ) \geq -\frac{1}{t} \bar{I}(vt,t). 
\]
Since the last inequality holds
for any $t$, the proof is completed by taking the supremum of 
the right side over all $t\geq 1$ and 
recalling the definition of $\bar{J}$.  
\end{proof}
\end{section}

\begin{section}{LDP Upper Bound}\label{Thesis_upperbound}
We now wish to prove a matching large deviation 
upper bound to Proposition \ref{Thesis_ldplb}, still working 
under Assumptions \ref{asmUE}, \ref{asmIID} and \ref{asmT}. 
Ideally, we would like for the upper bound to be valid for
all $v \in H_\ell$. 
This is possible for $d=1$ (see the remarks at the end of the paper), 
but for 
$d>1$ we are only able to prove a matching upper bound to Proposition
\ref{Thesis_ldplb} in a neighborhood of the set where $\bar{J}(v)$ 
equals zero. 
However, this is enough to be able to prove Theorems 
\ref{Thesis_LDPdiff} and \ref{nestLDPdiff}.

A key step in proving the large deviation upper bound in both the 
non-nestling and nestling cases is the following upper bound 
involving regeneration times:
\begin{lem}\label{chcvlem}
For any $t,k\in \N$ and any $x\in \Z^d$,
\[
\bP( X_{\tau_k} = x, \; \tau_k = t ) 
\leq e^{-t \bar{J}\left( \frac{x}{t} \right) } . 
\]
\end{lem}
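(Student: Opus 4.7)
The plan is a Chernoff bound applied to the i.i.d.\ regeneration increments, followed by an explicit algebraic rewrite that matches the infimum defining $\bar{J}$.

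First, by Theorem \ref{Thesis_regindep}, under $\bar{\mathbb{P}}$ the vector $(X_{\tau_k},\tau_k)$ is the sum of $k$ i.i.d.\ copies of $(X_{\tau_1},\tau_1)$, so its log-moment generating function equals $k\bar{\Lambda}(\eta,\lambda)$. The exponential Chebyshev inequality then gives, for every $(\eta,\lambda)\in\R^d\times\R$,
\[
\bar{\mathbb{P}}\bigl(X_{\tau_k}=x,\,\tau_k=t\bigr) \;\leq\; \bar{\mathbb{E}}\,e^{(\eta,\lambda)\cdot\bigl((X_{\tau_k},\tau_k)-(x,t)\bigr)} \;=\; e^{-(\eta,\lambda)\cdot(x,t)+k\bar{\Lambda}(\eta,\lambda)}.
\]
Optimizing over $(\eta,\lambda)$ and using the definition of $\bar{I}$ yields
\[
\bar{\mathbb{P}}\bigl(X_{\tau_k}=x,\,\tau_k=t\bigr) \;\leq\; \exp\Bigl\{-k\,\bar{I}\!\left(\tfrac{x}{k},\tfrac{t}{k}\right)\Bigr\}.
\]

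Next, introduce $s:=k/t$ and rewrite the exponent as
\[
k\,\bar{I}\!\left(\tfrac{x}{k},\tfrac{t}{k}\right) \;=\; t\cdot s\,\bar{I}\!\left(\tfrac{x/t}{s},\tfrac{1}{s}\right),
\]
which has exactly the shape of an entry in the infimum defining $\bar{J}(x/t)$. The only thing to check is that $s$ lies in the admissible range $(0,1]$: this is where I need to be a little careful, and it is the one substantive (but easy) step. Because the regeneration times satisfy $\tau_k\geq k$ (each $\tau_i>\tau_{i-1}$ and the walk is nearest-neighbor in integer time), the event $\{\tau_k=t\}$ is empty unless $t\geq k$, i.e.\ unless $s=k/t\in(0,1]$; if $t<k$ the claimed bound is trivial since the probability is $0$.

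Thus for the relevant $s$,
\[
s\,\bar{I}\!\left(\tfrac{x/t}{s},\tfrac{1}{s}\right) \;\geq\; \inf_{0<s'\leq 1} s'\,\bar{I}\!\left(\tfrac{x/t}{s'},\tfrac{1}{s'}\right) \;=\; \bar{J}\!\left(\tfrac{x}{t}\right),
\]
and combining with the Chernoff bound gives $\bar{\mathbb{P}}(X_{\tau_k}=x,\tau_k=t)\leq e^{-t\bar{J}(x/t)}$. There is no real obstacle; the only conceptual point is matching the $s\leq 1$ constraint built into $\bar{J}$ with the deterministic inequality $\tau_k\geq k$.
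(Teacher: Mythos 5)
Your proof is the same Chernoff-bound argument the paper uses: exponential Chebyshev with the i.i.d.\ regeneration structure from Theorem \ref{Thesis_regindep}, optimize to get $\bar{I}$, substitute $s=k/t$, and invoke the definition of $\bar{J}$. The one thing you add beyond the paper's write-up is to spell out why $s=k/t\in(0,1]$ (namely $\tau_k\geq k$, so the event is empty when $t<k$); the paper leaves this implicit, and your observation is a correct and worthwhile clarification.
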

\begin{proof}
Chebychev's inequality implies that, for any $\l \in \R^{d+1}$,
\[
\bP\left( X_{\tau_k}=x, \tau_k=t \right) \leq e^{-\l\cdot(x,t)} 
\bE e^{\l \cdot(X_{\tau_k},\tau_k)} = e^{-k\left( \l\cdot(x/k, t/k) - 
\bL(\l) \right)},
\]
where in the last equality we used the \iid structure of 
regeneration times from Theorem \ref{Thesis_regindep}. 
Thus, taking the infimum over all $\l \in \R^{d+1}$ and 
using the definition of $\bar{J}$ (with $s=\frac{k}{t}$),  
\[
\bP\left( X_{\tau_k}=x, \tau_k=t \right) 
\leq e^{-k \bar{I}\left( \frac{x}{k}, \frac{t}{k} \right) } = e^{-t 
\frac{k}{t} \bar{I}\left( \frac{x}{t} \frac{t}{k}, 
\frac{t}{k} \right) } \leq e^{-t \bar{J}\left( \frac{x}{t} \right)} .
\]
\end{proof}

\begin{subsection}{LDP Upper Bound - Non-nestling Case} \label{nnldpub}
We are now ready to give a matching upper bound to
Proposition \ref{Thesis_ldplb} in a neighborhood of $\vp$.
Let $\mathcal{A}' := \{ v \in \R^d: \| \nabla \bar{J}(v) \| < 
\frac{C_2}{4} \}$. Note that Lemma \ref{nnanalytic} 
implies that $\mathcal{A}' \subset \mathcal{A}$. 
\begin{prop}[Upper Bound] \label{Thesis_ldpub}
Let Assumptions \ref{asmUE}, \ref{asmIID}, and \ref{asmT} hold, 
and let $P$ be non-nestling in direction $\ell$. 
Then, if $v\in \mathcal{A}'$ and $\d>0$ is sufficiently small, 
\[
\limsup_{n\ra\infty} \frac{1}{n} \log \P \left( \|X_n - n v \| < n
\d \right) \leq - \inf_{\|x - v\| < \d}  \bar{J}(x).
\]
\end{prop}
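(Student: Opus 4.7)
The plan is to combine a regeneration decomposition of $\{X_n \approx nv\}$ with an exponential Chebyshev bound whose tilt parameters are position-dependent. By Lemma \ref{nnanalytic}, each $u \in \mathcal{A}'$ corresponds to a unique pair $(\eta_u, \lambda_u) := (\nabla \bar{J}(u), \lambda(\nabla \bar{J}(u)))$ satisfying $\bL(\eta_u, \lambda_u) = 0$, $\|\eta_u\| < C_2/4$, $\lambda_u < C_2/2$, and $\bar{J}(u) = \eta_u \cdot u + \lambda_u$, with $(\eta_u, \lambda_u)$ depending continuously on $u$. Fix $\delta$ small enough that the closed $\delta$-ball around $v$ lies in $\mathcal{A}'$.

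Fix $y \in \Z^d$ with $\|y - nv\| < n\delta$ and decompose according to $k = k^*(n) := \max\{k \geq 0 : \tau_k \leq n\}$ and the values $(t, x) = (\tau_k, X_{\tau_k})$; since the walk is nearest-neighbor, $\{X_n = y\}$ forces $\|x - y\| \leq n - t$. The $\P$-MGF of $(X_{\tau_k}, \tau_k)$ factors by Theorem \ref{Thesis_regindep} as
\begin{equation*}
\E e^{(\eta_{y/n}, \lambda_{y/n}) \cdot (X_{\tau_k}, \tau_k)} = M_1^\P(\eta_{y/n}, \lambda_{y/n}) \cdot e^{(k-1)\bL(\eta_{y/n}, \lambda_{y/n})} = M_1^\P(\eta_{y/n}, \lambda_{y/n}),
\end{equation*}
where $M_1^\P(\eta, \lambda) := \E e^{\eta \cdot X_{\tau_1} + \lambda \tau_1}$. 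Since $\|X_{\tau_1}\| \leq \tau_1$ and $\|\eta_{y/n}\| + \max(\lambda_{y/n},0) < C_2/4 + C_2/2 < C_2$, the bound \eqref{tau1exptail} yields $M_1^\P(\eta_{y/n}, \lambda_{y/n}) \leq \bE e^{(3C_2/4)\tau_1} < \infty$ uniformly in $y$. Combining Chebyshev with the independence of the overhanging block $\tau_{k+1} - \tau_k$ and its exponential tail $\bP(\tau_1 > n-t) \leq C e^{-C_2(n-t)}$ (Markov applied to \eqref{tau1exptail}), I obtain
\begin{equation*}
\P(\tau_k = t, X_{\tau_k} = x, \tau_{k+1} > n) \;\leq\; C \, e^{-\eta_{y/n} \cdot x - \lambda_{y/n} t - C_2(n - t)}.
\end{equation*}

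Using $-\eta_{y/n} \cdot x \leq -\eta_{y/n} \cdot y + \|\eta_{y/n}\|(n - t)$ and $\|\eta_{y/n}\| + \lambda_{y/n} < C_2$, the exponent is bounded by $-\eta_{y/n} \cdot y - \lambda_{y/n} n = -n \bar{J}(y/n)$. Summing over $k \leq n$ and $(t, x)$ contributes only a factor $n^{O(1)}$, giving $\P(X_n = y) \leq n^{O(1)} e^{-n \bar{J}(y/n)}$; summing further over $y$ in the discrete ball $\{\|y - nv\| < n\delta\}$ (another $n^{O(1)}$ factor) yields
\begin{equation*}
\P(\|X_n - nv\| < n\delta) \;\leq\; n^{O(1)} \, e^{-n \inf_{\|u - v\| < \delta} \bar{J}(u)},
\end{equation*}
from which the conclusion follows upon taking $\frac{1}{n}\log$ and $\limsup$.

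The main obstacle is uniformly controlling the annealed correction factor $M_1^\P$ over the neighborhood of $v$. The restriction $\|\nabla \bar{J}(v)\| < C_2/4$ defining $\mathcal{A}'$ is chosen precisely to ensure $\|\eta_u\| + \max(\lambda_u,0) < C_2$ for $u$ near $v$, permitting a comparison with Sznitman's exponential moment \eqref{tau1exptail}; without this, the first regeneration block---whose distribution under $\P$ differs from the \iid distribution of the subsequent ones---would spoil the estimate. A secondary point is that one must perform the Chebyshev tilt \emph{pointwise} in $y$ rather than once at the target $v$, in order to produce the sharp quantity $\inf_{\|u - v\| < \delta} \bar{J}(u)$ on the right-hand side rather than the strictly weaker value $\bar{J}(v)$.
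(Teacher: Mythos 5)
Your proof is correct and reaches the desired conclusion, but it follows a genuinely different route from the paper's. The paper first splits the event according to whether some regeneration gap $\tau_k-\tau_{k-1}$ exceeds $\e n$, applies the Chebyshev bound of Lemma \ref{chcvlem} (which goes through the rate function $\bar I$ and the relation $\bar J(x)=\inf_{s\le1}s\bar I(x/s,1/s)$), and then must show that the resulting infimum over the auxiliary parameters $s,u$ is attained at $s=u=0$ via a separate convexity computation. You instead tilt pointwise in the target $y$, choosing $(\eta_{y/n},\l_{y/n})=(\nabla\bar J(y/n),\l(\nabla\bar J(y/n)))$ on the zero level set of $\bL$; this makes the product MGF collapse (the $(k-1)$-fold factor $e^{(k-1)\bL}$ is identically $1$), and together with the identity $\bar J(u)=\eta_u\cdot u+\l_u$ from Lemma \ref{nnanalytic}, the nearest-neighbor estimate $\|x-y\|\le n-t$, and the overhang tail $\bP(\tau_1>n-t)\le Ce^{-C_2(n-t)}$, the whole exponent is bounded by $-n\bar J(y/n)$ plus a term $(\l+\|\eta\|-C_2)(n-t)\le -(C_2/4)(n-t)$ which is negative and absorbs the $O((n-t)^d)$ combinatorial count of $x$. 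The net effect is that the convexity argument (showing the infimum sits at $s=0$) is replaced by the gradient bound $\|\eta_u\|<C_2/4$ applied directly inside the Chebyshev exponent, which makes the mechanism behind the restriction defining $\mathcal A'$ more transparent; the price is that one must track the $\P$-versus-$\bP$ asymmetry in the first block via the prefactor $M_1^{\P}$, which you handle correctly (note only the cosmetic slip $\bE e^{(3C_2/4)\tau_1}$ in place of $\E e^{(3C_2/4)\tau_1}$, both of which are finite since the paper's exponential tail estimate holds under both measures). You should also say a word about the $k=0$ term, i.e.\ $\tau_1>n$, which is bounded by $C_3e^{-C_2n}\le C_3 e^{-n\bar J(v)}$ because $\bar J<C_2/2$ on the $\delta$-ball, but this is a minor omission rather than a gap.
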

\begin{proof}
Since $\bar{J}$ is convex, $0 = \bar{J}(\vp) \geq \bar{J}(v) + 
\nabla \bar{J}(v) \cdot( \vp - v)$. Then, since $\| 
\nabla \bar{J}(v) \| < C_2/4$ for any $v\in \mathcal{A}'$ 
we have that $\bar{J}(v) \leq C_2/4 \| \vp - v \| < C_2/2$. 
Thus, for a fixed $v \in \mathcal{A}'$ 
we can choose a $\d>0$ and an $\e \in (0,1/2)$ 
such that $\bar{J}(v') < \e C_2$ 
and $\nabla \bar{J}(v') < C_2/4$ for all $\| v' - v\| < \d$.

Recalling \eqref{tau1exptail}, we obtain that 
there exist constants $C_3, C_2> 0$ such that 
\[
 \max \left\{ \P( \tau_1 > t ) , \bP( \tau_1 > t ) \right\} 
 \leq C_3 e^{-C_2 t}, \qquad \forall t>0.
\]
Let $v\in \mathcal{A}'$, and let $\e,\d>0$ be chosen as above. Now,
\begin{align}
\P( \| X_n - n v \| < n \d ) &\leq \P(\exists k
\leq n: \tau_k-\tau_{k-1} \geq \e n)  \nonumber \\
&\qquad + \P\left( \exists k: \tau_1 < \e n, \; \tau_k 
\in (n-\e n, n], \; \| X_n - n v \| < n \d, \; \tau_{k+1} > n 
\right). \label{Thesis_tausmall}
\end{align}
Then, since $\bar{J}(v) < \e C_2 $, 
\[
\P(\exists k\leq n: \tau_k-\tau_{k-1} \geq \e n) \leq C_3 n e^{-C_2 \e n} 
\leq C_3 n e^{-n\bar{J}(v)}.
\]
Thus, we need only to bound the second term in \eqref{Thesis_tausmall}.  

Since the random walk is a nearest neighbor walk, $\| X_{\tau_k} - nv \| 
\leq \| X_n - nv \| + | n-\tau_k|$. Thus,
\begin{align}
&\P\left( \exists k: \tau_1 < \e n, \; \tau_k \in (n-\e n, n], \; \| 
X_n - n v \| < n \d, \; \tau_{k+1} > n \right) \nonumber \\
&\qquad \leq  \sum_{k\leq n} \sum_{u\in(0,\e)} \sum_{s\in [0,\e )} 
\P\left( \tau_1 = u n, \; \tau_k = (1-s)n, \; \|
X_{\tau_k} - n v \| < n(\d + s), \; \tau_{k+1} > n \right),  \nonumber
\end{align}
where the above sums are only over the finite number of possible $u$ and $s$ 
such that the probabilities are non-zero. However, 
\begin{align*}
&\P\left( \tau_1 = u n, \; \tau_k = (1-s)n, \; \|
X_{\tau_k} - n v \| < n(\d + s), \; \tau_{k+1} > n  \right)\\
&\qquad \leq \P\left( \tau_1 = u n, \; \tau_k-\tau_1 = (1-s-u)n, \; \| 
X_{\tau_k} - X_{\tau_1} - nv \| \leq n (\d + s + u), \; \tau_{k+1}-\tau_k > 
n s \right) \\
&\qquad = \P(\tau_1 = u n) \bP\left( \tau_{k-1} = (1-s-u)n, \; \| 
X_{\tau_{k-1}} - nv \| \leq n(\d+s+u) \right) \bP(\tau_1 > ns),
\end{align*}
where the first inequality again uses the fact that the random walk 
is a nearest neighbor random walk, and the last equality 
uses the independence structure of regeneration times from Theorem 
\ref{Thesis_regindep}. Thus, since $\P(\tau_1 = u n ) 
\leq C_3 e^{-C_2 u n}$ and $\bP(\tau_1 > ns ) \leq C_3 e^{-C_2 s n}$,  
\begin{align}
&\P\left( \exists k: \tau_1 < \e n, \; \tau_k \in (n-\e n, n], \; \|
X_n - n v \| < n \d, \; \tau_{k+1} > n \right) \nonumber \\
& \quad  \leq \sum_{k\leq n} \sum_{u\in(0,\e)} \sum_{s\in [0,\e )}
C_3^2 e^{-C_2 (u+s) n} \bP\left( \tau_{k-1} = (1-s-u)n, \; 
\|X_{\tau_{k-1}} - n v \| < n(\d + s + u) \right). \label{Thesis_smallfirsttau}
\end{align}
By Lemma \ref{chcvlem}, the last expression is 
bounded above by
\begin{align}
&\sum_{k\leq n} \sum_{u\in(0,\e)} \sum_{s\in [0,\e )} \sum_{\| x - v \| <
\d + u+s} e^{-n(1-s-u)\bar{J}\left( \frac{x}{1-s-u} \right)} 
C_3^2 e^{-C_2 (s+u) n } \nonumber \\
&\qquad \leq C_4 n^{d+3} \sup_{s\in [0,2\e)} \sup_{\|x - v\| <
\d + s} e^{-n\left( (1-s)\bar{J}\left( \frac{x}{1-s} \right) + 
C_2 s \right)} \nonumber \\
&\qquad = C_4 n^{d+3} 
\exp \left\{ -n \left( \inf_{s\in[0,2\e)} \inf_{\|x - v\| < \d + s}
(1-s)\bar{J}\left( \frac{x}{1-s} \right) + 
C_2 s \right) \right\}, \label{Thesis_2sups}
\end{align}
for some constant $C_4$. 

To finish the proof of the proposition, it is enough to show that the
infimum in \eqref{Thesis_2sups} is achieved when $s=0$. 
That is, it is enough to show the infimum is 
larger than $\inf_{\|x-v\| < \d} \bar{J}(x)$. 
To this end, note first that 
\begin{align}
\inf_{s\in[0,2\e)} & \inf_{\|x - v\| < \d + s} 
(1-s)\bar{J}\left( \frac{x}{1-s} \right) + C_2 s = 
\inf_{\|x-v\| < \d} \inf_{s\in[0,2\e)} \inf_{\|y - x\| < s} 
(1-s)\bar{J}\left( \frac{y}{1-s} \right) + C_2 s.  \label{Thesis_3infs}
\end{align}
Since $\bar{J}$ is convex,
\[
 \bar{J}\left( \frac{y}{1-s} \right) \geq \bar{J}(x) + 
 \nabla \bar{J}(x) \cdot\left( \frac{y}{1-s} - x \right)
 \geq \bar{J}(x) - \|\nabla \bar{J}(x)  \| \left\|\frac{y}{1-s} - x \right \| .
\]
If $\|y-x \| < s$ and $\| \nabla \bar{J}(x) \| < C_2/4$ this implies that
\[
 (1-s) \bar{J}\left( \frac{y}{1-s} \right) + C_2 s
 \geq  (1-s) \bar{J}(x) - \frac{C_2}{2}s + C_2 s = \bar{J}(x) + 
 \left(\frac{C_2}{2} - \bar{J}(x)\right) s.
\]
Recalling \eqref{Thesis_3infs}, we obtain 
\begin{align*}
\inf_{s\in[0,2\e)} \inf_{\|x - v\| < \d + s} (1-s)\bar{J}\left( \frac{x}{1-s}
\right) + C_2 s  &\geq  \inf_{\|x-v\| < \d} \inf_{s\in[0,2\e)} \bar{J}(x) + 
s\left( \frac{C_2}{2} - \bar{J}(x) \right)\\
&= \inf_{\|x-v\| < \d} \bar{J}(x) ,
\end{align*}
where the last inequality is because our choice of $\d$ and $\| x - v\| < 
\d$ imply that $\bar{J}(x)  < \e C_2 < \frac{C_2}{2}$. 
This completes the proof of the proposition.
\end{proof}
\end{subsection}

\begin{subsection}{LDP Upper Bound - Nestling Case}\label{NestlingUpperbound}
Before proving a large deviation upper bound in 
the nestling case, we need the following lemma.
\begin{lem} \label{nestchcv}
Assume $P$ is nestling. If $xn\in \Z^d$ and $k\leq n$, then
\[
\bP(X_{\tau_k} = xn, \; \tau_k \leq n) \leq n e^{-n\bar{J}(x)}.
\]
\end{lem}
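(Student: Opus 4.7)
The plan is to combine the single-value exponential bound of Lemma \ref{chcvlem} with a scaling inequality for $\bar{J}$, reducing the lemma to a monotonicity property of $\bar{I}(x,\cdot)$ in its second argument.

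First, decompose according to the value of $\tau_k$: since $\tau_k$ takes integer values in $[k,n]$ on the event in question,
\[
\bP(X_{\tau_k} = xn,\, \tau_k \leq n) \;=\; \sum_{t=k}^{n} \bP(X_{\tau_k} = xn,\, \tau_k = t) \;\leq\; \sum_{t=k}^{n} e^{-t\,\bar{J}(xn/t)},
\]
where the inequality is Lemma \ref{chcvlem} applied term-by-term. Since there are at most $n$ summands, the stated bound $n e^{-n\bar{J}(x)}$ follows once one establishes the scaling inequality $t\,\bar{J}(xn/t) \geq n\,\bar{J}(x)$ for every $t \in \{k,\ldots,n\}$; equivalently, writing $r := t/n \in (0,1]$ and $y := x/r$, that $\bar{J}(ry) \leq r\,\bar{J}(y)$ for all $r \in (0,1]$ and $y \in H_\ell$.

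To prove the scaling inequality, unfold both sides. Substituting $u = rs$ in the infimum defining $\bar{J}(y)$ yields
\[
r\,\bar{J}(y) \;=\; \inf_{u \in (0,\,r]}\, u\,\bar{I}\!\left(\tfrac{ry}{u},\,\tfrac{r}{u}\right), \qquad \bar{J}(ry) \;=\; \inf_{u \in (0,\,1]}\, u\,\bar{I}\!\left(\tfrac{ry}{u},\,\tfrac{1}{u}\right).
\]
Two things differ: the domains (with $(0,r] \subset (0,1]$) and the second argument of $\bar{I}$ (with $r/u \leq 1/u$ since $r \leq 1$). Provided $\bar{I}(z,\cdot)$ is non-increasing on $[0,\infty)$, one has $\bar{I}(ry/u,\, r/u) \geq \bar{I}(ry/u,\, 1/u)$ pointwise in $u$, and combining this with the domain inclusion gives $r\,\bar{J}(y) \geq \bar{J}(ry)$.

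The required monotonicity is furnished by Lemma \ref{bLnest}: since $\bL(\eta,\l) = +\infty$ whenever $\l > 0$, the effective domain of $\bL$ is contained in $\{\l \leq 0\}$, so
\[
\bar{I}(z,t) \;=\; \sup_{\eta \in \R^d,\; \l \leq 0}\bigl[\eta \cdot z + \l t - \bL(\eta,\l)\bigr].
\]
For each fixed $\l \leq 0$, the function $t \mapsto \l t$ is non-increasing on $[0,\infty)$, and hence so is the pointwise supremum. The main obstacle is precisely this sub-homogeneity step $\bar{J}(ry) \leq r\bar{J}(y)$; convexity of $\bar{J}$ alone is insufficient because $\bar{J}$ is not defined at the origin of $\R^d$, so one must descend to the Legendre-dual representation of $\bar{I}$ and exploit the one-sided sign constraint on $\l$ in the effective domain of $\bL$.
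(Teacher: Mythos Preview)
Your argument is correct. The overall structure matches the paper's proof exactly: decompose over the value of $\tau_k$, apply Lemma~\ref{chcvlem} termwise, and reduce to the sub-homogeneity inequality $\theta\,\bar{J}(x/\theta)\geq \bar{J}(x)$ for $\theta\in(0,1]$ (your $\bar{J}(ry)\leq r\bar{J}(y)$ with $r=\theta$, $y=x/\theta$).

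Where you diverge is in the proof of that inequality. The paper argues via convexity of $\bar{J}$ together with Corollary~\ref{Jzeroset}: from $\theta\,\bar{J}(x/\theta)+(1-\theta)\bar{J}(z)\geq \bar{J}(x+(1-\theta)z)$ one sets $z=c\vp$ with $c\in(0,1]$, uses $\bar{J}(c\vp)=0$, and lets $c\to 0^+$. You instead unfold the definition of $\bar{J}$ and use directly that $\bar{I}(z,\cdot)$ is non-increasing on $[0,\infty)$, which is an immediate consequence of Lemma~\ref{bLnest} (the supremum defining $\bar{I}$ may be restricted to $\l\leq 0$). Your route is shorter and more self-contained: it bypasses Corollary~\ref{Jzeroset} and hence the chain Lemma~\ref{bJnest2} $\to$ Lemma~\ref{Iconstant} $\to$ Lemma~\ref{bLnest}, going straight to Lemma~\ref{bLnest}. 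The paper's route, on the other hand, makes explicit the geometric reason (the zero set of $\bar{J}$ contains the segment $(0,\vp]$) and ties the lemma to the broader picture developed in Section~\ref{nestbJprop}. Both are valid; yours is the more economical proof of this particular lemma.
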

\begin{proof}
Lemma \ref{chcvlem} implies that
\begin{align*}
 \bP(X_{\tau_k} = xn,  \; \tau_k \leq n) &=\sum_{\theta\in(0,1], \; \theta n \in
 \Z } \bP(X_{\tau_k} = xn,  \; \tau_k = \theta n) \leq \sum_{\theta\in(0,1], \; 
 \theta n \in \Z  } e^{-n \theta \bar{J}\left( \frac{x}{
 \theta} \right)}. 
\end{align*}
Then, we will be finished if we can show that 
$\theta \bar{J}\left( \frac{x}{\theta} \right) \geq \bar{J}(x)$. 
The convexity of $\bar{J}$ implies that $\theta \bar{J}\left( 
\frac{x}{\theta} \right) \geq \bar{J}( x + (1-\theta) z ) - (1-\theta) 
\bar{J}(z)$ for any $z$. Letting $z=c \vp $ for some $c \in(0,1]$,
Lemma \ref{Jzeroset} implies that $\theta \bar{J}\left( \frac{x}{\theta} 
\right) \geq \bar{J}( x + (1-\theta) c \vp )$. 
Letting $c\ra 0^+$ completes the proof.
\end{proof}

We are now ready to prove a matching large deviation upper bound to 
Proposition \ref{Thesis_ldplb} in the nestling case. 
The proof is similar to the proof of the upper bound in the non-nestling case. 
However, instead of forcing regeneration times to be small, 
we instead force regeneration distances to be small. 
\begin{prop} \label{nestldpub}
Let Assumptions \ref{asmUE}, \ref{asmIID}, and \ref{asmT} hold, 
let $P$ be nestling, and let $d\geq 2$. 
Then, if $v\in \mathcal{A}$ and $\d>0$ is sufficiently small, 
\[
 \limsup_{n\ra\infty} \frac{1}{n} \log \P( \|X_n - n v\| < \d n ) \leq 
 -\inf_{\|x-v\|\leq  \d} \bar{J}(x). 
\]
\end{prop}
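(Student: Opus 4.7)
The plan is to mimic the non-nestling proof of Proposition \ref{Thesis_ldpub}, but to trade control of regeneration \emph{times} for control of regeneration \emph{distances}. Two inputs do the work of the exponential-tail estimate \eqref{tau1exptail}: by Assumption \ref{asmT}(ii), the block-maximum displacement $W_1 := \sup_{0\leq m\leq \tau_1}\|X_m\|$ has $\bE e^{C_1 W_1}<\infty$, hence $\bP(W_1\geq s)\leq C e^{-C_1 s}$; and Lemma \ref{nestchcv} absorbs the constraint $\tau_k\leq n$ directly into the bound $\bP(X_{\tau_k}=xn,\tau_k\leq n)\leq n\,e^{-n\bar J(x)}$, so there is no need to force $\tau_k$ close to $n$ as was done in the non-nestling case.

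First, set $k^{*}:=\max\{k:\tau_k\leq n\}$ and $s:=\|X_n - X_{\tau_{k^*}}\|$; if $\|X_n-nv\|<\delta n$ then $\|X_{\tau_{k^*}} - nv\|<\delta n + s$. After regeneration at $\tau_{k^*}$ the iid structure of Theorem \ref{Thesis_regindep} bounds the conditional probability of $\{\tau_{k^*+1}>n,\,\|X_n-X_{\tau_{k^*}}\|=s\}$ by $\bP(W_1\geq s)\leq Ce^{-C_1 s}$, yielding for each $k,t,x,s$ the factorization
\[
\P(\tau_k=t,X_{\tau_k}=x,\tau_{k+1}>n,\|X_n-x\|=s)\leq \P(\tau_k=t,X_{\tau_k}=x)\cdot C e^{-C_1 s}.
\]
Summing over $t\leq n$ (and applying Lemma \ref{nestchcv} to the first factor), over $k\leq n$, over $\|x-nv\|<\delta n+s$, and over $s$, and writing $r:=s/n$, I would arrive at
\[
\P(\|X_n-nv\|<\delta n)\leq \mathrm{poly}(n)\,\exp\Bigl\{-n\inf_{r\geq 0}\bigl[C_1 r+\inf_{\|y-v\|<\delta+r}\bar J(y)\bigr]\Bigr\}.
\]
The bookkeeping for the first regeneration block (distributed under $\P$ rather than $\bP$) and the factor $\P(D)$ are absorbed into the polynomial prefactor.

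It remains to show $\inf_{r\geq 0}\bigl[C_1 r+\inf_{\|y-v\|<\delta+r}\bar J(y)\bigr]\geq \inf_{\|x-v\|\leq\delta}\bar J(x)$. Choose $\delta$ small enough that $\bar B(v,\delta)\subset\mathcal{A}$ (possible since $\mathcal{A}$ is open and $v\in\mathcal{A}$); Corollary \ref{bJnest3} then yields $\|\nabla\bar J(x)\|<C_1$ for every $x\in\bar B(v,\delta)$. If $y\in H_\ell$ satisfies $\delta<\|y-v\|<\delta+r$, let $x:=v+\delta(y-v)/\|y-v\|$, so that $\|x-v\|=\delta$ and $\|y-x\|\leq r$; convexity of $\bar J$ then gives
\[
\bar J(y)\geq \bar J(x)+\nabla\bar J(x)\cdot(y-x)\geq \bar J(x)-C_1 r\geq \inf_{\|x'-v\|\leq\delta}\bar J(x')-C_1 r,
\]
so $\inf_{\|y-v\|<\delta+r}\bar J\geq \inf_\delta\bar J-C_1 r$ (the case $\|y-v\|\leq\delta$ being trivial), and the claim follows upon rearrangement. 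The main obstacle is precisely this convexity step: the $C_1 r$ cost of a large last-block displacement must exactly cancel the enlargement of the position ball from radius $\delta$ to $\delta+r$, and this works only because the exponential tail rate $C_1$ of $W_1$ (Assumption \ref{asmT}(ii)) matches the gradient bound $\|\nabla\bar J\|<C_1$ on $\mathcal{A}$ (Corollary \ref{bJnest3}).
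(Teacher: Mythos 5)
Your overall strategy is the same as the paper's: trade control of regeneration times for control of regeneration distances, use the exponential tail of the block-maximum displacement $S_0=\sup_{n<\tau_1}\|X_n\|$ coming from Assumption \ref{asmT}(ii), apply Lemma \ref{nestchcv}, and close the argument with the convexity inequality using the gradient bound $\|\nabla\bar J\|<C_1$ from Corollary \ref{bJnest3}. Your convexity step at the end is correct. However, there is a genuine gap in the middle.

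You write the factorization
\[
\P(\tau_k=t,X_{\tau_k}=x,\tau_{k+1}>n,\|X_n-x\|=s)\leq \P(\tau_k=t,X_{\tau_k}=x)\cdot C e^{-C_1 s}
\]
and then say you apply Lemma \ref{nestchcv} to the first factor. But Lemma \ref{nestchcv} bounds $\bP(X_{\tau_k}=xn,\tau_k\leq n)$, not $\P(X_{\tau_k}=xn,\tau_k\leq n)$. Under $\P$ the first regeneration block $(X_{\tau_1},\tau_1)$ has a different law than under $\bP$ (this is exactly the content of Theorem \ref{Thesis_regindep}: the increments are independent under $\P$ but only i.i.d.\ under $\bP$), and the Chebychev argument behind Lemma \ref{chcvlem}/\ref{nestchcv} crucially uses $\bE e^{\l\cdot(X_{\tau_k},\tau_k)}=\bL(\l)^k$, which fails under $\P$. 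You dismiss this with ``the bookkeeping for the first regeneration block \dots\ absorbed into the polynomial prefactor,'' but that is not justified: the first block's displacement $\|X_{\tau_1}\|$ can be of order $n$, and without attaching an exponential cost to it, the position $X_{\tau_k}-X_{\tau_1}$ that feeds into Lemma \ref{nestchcv} is shifted by up to $O(n)$, which changes the exponent, not merely the prefactor. The paper handles this by decomposing on the first regeneration position $X_{\tau_1}=xn$ as well, bounding $\P(X_{\tau_1}=xn)\leq C_5 e^{-C_1\|x\|n}$ via the same $S_0$-tail, and then applying Lemma \ref{nestchcv} to the $\bP$-probability of the intermediate increment $X_{\tau_{k-1}}$. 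This introduces a third scaled variable $\|x\|<c$ into the final infimum, so the paper obtains $\inf_{\|z\|<\d}\inf_{\|x\|<c}\inf_{\|y\|<c}\bar J(v+z-x-y)+C_1(\|x\|+\|y\|)$ rather than your $\inf_{r\geq 0}\bigl[C_1 r+\inf_{\|y-v\|<\d+r}\bar J(y)\bigr]$. Once the first block is decomposed in this way, the convexity step you give (with two displacement variables rather than one) goes through exactly as the paper's does, since the same $C_1$ rate controls both the first and last blocks.

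A secondary point you leave implicit: if $k^*=0$ (i.e.\ $\tau_1>n$), Lemma \ref{nestchcv} does not apply at all, so this case must be ruled out separately. The paper does this by first peeling off the event $\max_i S_i\geq cn$ with $c<\|v\|-\d$ chosen so that $\bar J(v')<cC_1$ near $v$; on the complement, $\|X_n-nv\|<\d n$ forces $\|X_n\|>(\|v\|-\d)n>cn\geq S_0$, hence $\tau_1\leq n$.
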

\begin{proof}
 Corollary \ref{bJnest3} implies that $\| 
 \nabla \bar{J}(v) \| <  C_1$ for all $v \in \mathcal{A}$. 
 Since $\bar{J}$ is convex, $\bar{J}(z) \geq \bar{J}(v) + 
 \nabla \bar{J}(v) \cdot(z-v)$ for any $z$. 
 Letting $z= \theta \vp$ for any $\theta \in(0,1]$, Lemma \ref{Jzeroset} 
 implies that $\bar{J}(v) \leq - \nabla \bar{J}(v) \cdot (\theta \vp - v) 
 \leq \| \nabla \bar{J}(v) \| \| \theta\vp -v \|$. Letting $\theta\ra 0^+$, 
 we obtain that $\bar{J}(v) \leq \| \nabla \bar{J}(v) \| \| v \| < 
 C_1 \| v \|$. 

Now, for a fixed $v \in \mathcal{A}$, choose a $\d>0$ and a $c< \|v\| - \d$  
such that $\bar{J}(v') < c C_1$ and $\| \nabla \bar{J}(v')\| < C_1$ 
for all $\|v' - v\| < \d$. 
Letting $\tau_0:= 0$, we define $S_k:= \sup_{\tau_k < n \leq \tau_{k+1}} \| 
X_n - X_{\tau_k} \|$. 
By Assumption \ref{asmT}, it is clear that there exists a constant 
$C_5>0$ such that
\begin{equation}\label{Stails}
 \max \left\{ \bP\left( S_0 > t \right), \P\left( S_0  > t \right) 
 \right\} = \max \left\{ \bP\left( \sup_{n<\tau_1} \|X_{n} \| > t 
 \right), \P\left( \sup_{n<\tau_1} \|X_{n} \| > t \right) 
 \right\} \leq C_5 e^{-C_1 t}
\end{equation}
Then,
\begin{align}
&\P( \|X_n - n v\| < \d n ) \nonumber \\
&\qquad \leq \P( S_0 \geq c n ) + n\bP( S_0 \geq c n) 
 + \P( \| X_n -n v\| < \d n, \; S_i < c n \quad \forall i=0,1,\ldots n)  
 \nonumber \\
&\qquad \leq C_5(n+1)e^{-C_1 c n} 
 + \P( \| X_n -n v\| < \d n, \; S_i < c n \quad \forall i=0,1,\ldots n )  
 \nonumber \\
&\qquad \leq C_5(n+1)e^{-n \bar{J}(v)} 
 + \P( \| X_n -n v\| < \d n, \; S_i < c n \quad \forall i=0,1,\ldots n ), 
 \label{largeregdist}
\end{align}
where the last inequality is because $\bar{J}(v) < c C_1$. Thus, it is 
enough to bound the second term on the right side of \eqref{largeregdist}.
Since $c < \| v \| - \d$, 
the event $\{ \|X_n - n v \| < \d n , \; S_0 < c n \}$ 
implies that $\tau_1 < n$. 
Decomposing according to the last regeneration time before $n$, 
we obtain that
\begin{align}
& \P( \| X_n -n v\| < \d n, \; S_i <  c  n \quad \forall i=0,1,\ldots n )  \nonumber \\
&\qquad = \sum_{k=1}^n \P(\tau_k \leq n < \tau_{k+1}, \; \| X_n -n v\| < \d n, \; S_i <  c  n \quad \forall i=0,1,\ldots n ) \nonumber \\
&\qquad \leq \sum_{k=1}^n \sum_{\|x\| <  c } \sum_{\|y\| <  c  }\sum_{\|z\| < \d} \P(X_{\tau_1} = x n, \;  X_{\tau_k} = n(v+z-y), \; X_n= n(v+z), \; \tau_k \leq n < \tau_{k+1} ), \label{foursums}
\end{align}
where the above sums are only over the finite number of possible $x, y,$ and $z$ such that the probabilities are non-zero. The \iid structure of regeneration times and distances from Theorem \ref{Thesis_regindep} implies that
\begin{align*}
& \P(X_{\tau_1} = x n, \;  X_{\tau_k} = n(v+z-y), \; X_n= n(v+z), \; \tau_k \leq n < \tau_{k+1} )\\
& \qquad \leq \P(X_{\tau_1} = x n) \bP(X_{\tau_{k-1}} = n(v+z-y-x), \; \tau_{k-1}\leq n) \bP( S_0 \geq \|y\|n) \nonumber \\
&\qquad \leq C_5 e^{-C_1 \|x\|n}  e^{-n\bar{J}\left( v+z-y-x \right)} C_5 e^{-C_1 \|y\|n},
\end{align*}
where in the last inequality we used \eqref{Stails} and Lemma \ref{nestchcv}.
Since there are at most $C_6 n^{3d+1}$ terms in the sum in \eqref{foursums} for some constant $C_6$ depending only on $c, \d$, and $d$, we obtain that
\begin{align}
& \P( \| X_n -n v\| < \d n, \; S_i <  c  n \quad \forall i=0,1,\ldots n ) \nonumber \\
&\qquad \leq C_6 n^{3d+1} \exp \left\{ -n \left(\inf_{\|z\| < \d}\inf_{\|x\| < c } \inf_{\|y\| < c} \bar{J}(v+z-x-y)+ C_1( \|x\| + \|y\|)  \right)   \right\}.  \label{nestinfs}
\end{align}
However, the convexity of $\bar{J}$ and the fact that $\| \nabla \bar{J}(v+z) \| < C_1$ for all $\|z\| < \d$ imply that
\[
 \bar{J}(v+z-x-y) \geq \bar{J}(v+z) + \nabla \bar{J}(v+z) \cdot (-x-y) \geq \bar{J}(v+z) - C_1( \|x\| + \|y\|). 
\]
Thus, the infimum in \eqref{nestinfs} is achieved when $\|x\| = \|y\| = 0$, and therefore,
\[
 \P( \| X_n -n v\| < \d n, \; S_i <  c  n \quad \forall i=0,1,\ldots n ) \leq C_6 n^{3d+1} \exp \left\{ -n \inf_{\|z\| < \d} \bar{J}(v+z)   \right\}.
\]
This, combined with \eqref{largeregdist} completes the proof of the proposition.
\end{proof}

\end{subsection}

Finally, we give the proofs of the main results of this paper.
\begin{proof}[\textbf{Proof of Theorems \ref{Thesis_LDPdiff} and \ref{nestLDPdiff}:}]$\left.\right.$\\
The annealed large deviation principle in Theorem \ref{annealedLDP} implies that 
\[\lim_{\d\ra 0} \liminf_{n\ra 0} \frac{1}{n} \log \P( \| X_n - nv \| < n \d ) = -H(v).\] 
Then, if the law on environments is non-nestling, Propositions \ref{Thesis_ldplb} and  \ref{Thesis_ldpub} imply that $\bar{J}(v) = H(v)$ for all $v \in \mathcal{A}'$ (where $\mathcal{A}'$ is defined as in the beginning of Subsection \ref{nnldpub}). 
Similarly, if $P$ is nestling, Propositions \ref{Thesis_ldplb} and  \ref{nestldpub} imply that $\bar{J}(v) = H(v)$ for all $v \in \mathcal{A}$ (where $\mathcal{A} = \mathcal{A}^+ \cup \mathcal{A}^0 \cup \mathcal{A}^-$ was defined as in Subsection \ref{nestbJprop}). The properties of $\bar{J}(v)$ given in Subsections \ref{nnbJprop} and \ref{nestbJprop} are then also true for $H(v)$.
\end{proof}

\end{section}

\begin{section}{Concluding Remarks and Open Problems}
 
\begin{enumerate}

\item The function $\bar{J}$ depends implicitly on the direction $\ell$ 
chosen for the definition of the regeneration times. 
Write $\bar{J}^\ell$ to make this dependence explicit. A consequence 
of our proofs of Theorems \ref{Thesis_LDPdiff} and \ref{nestLDPdiff} 
is that for any $\ell, \ell' \in \{ \xi \in S^{d-1}: \xi \cdot \vp > 0, \; c 
\xi \in \Z^d \text{ for some } c>0 \}$, $\bar{J}^\ell(v) = \bar{J}^{\ell'}(v)$ 
for all $v$ in some  neighborhood of where the functions are zero. 

\begin{op}\label{ldependence}
Recall that $\bar{J}^\ell$ is defined on $H_\ell = \{ v\in \R^d: v\cdot \ell > 0\}$. 
Is it true that $\bar{J}^\ell(v) = \bar{J}^{\ell'}(v)$ for all $v \in H_\ell \cap H_{\ell'}$?
\end{op}

\item The large deviations lower bound in Proposition \ref{Thesis_ldplb} holds for all $v \in H_\ell$, but we were only able to prove a matching upper bound in a neighborhood of the set where $\bar{J}(v) = 0$. 
However, if $d=1$ then we are able to prove a matching upper bound for all $v \in H_\ell$:
\begin{prop}[Proposition 6.3.11 in \cite{MyThesis}]\label{onedimub}
 Let $X_n$ be a RWRE on $\Z$. Let Assumptions \ref{asmUE} and \ref{asmIID} hold, and assume that $\P(\lim_{n\ra\infty} X_n = + \infty) = 1$. Define $\bar{J}$ as above in terms of regeneration times in the  direction $\ell = 1$. Then, for any $v > 0$ and $\d<v$,
\[
 \lim_{n\ra\infty} \frac{1}{n} \log \P( |X_n - nv | < n\d ) = - \inf_{x: |x-v| < \d} \bar{J}(x).
\]
\end{prop}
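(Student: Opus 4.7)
The lower bound is given by Proposition \ref{Thesis_ldplb}, so the task is the matching upper bound. The plan is to leverage the one-dimensional geometric observation that, on $\{\tau_k \leq n < \tau_{k+1}\}$, the walk is pinned in the interval $[X_{\tau_k}, X_{\tau_{k+1}} - 1]$; this follows from the regeneration property combined with the strict-maximum definition of $\tau_{k+1}$ and the nearest-neighbor structure. This tightly identifies the event $\{|X_n - nv| < n\delta\}$ with a constraint on the last-regeneration data $(X_{\tau_{N_n}}, \tau_{N_n})$, where $N_n := \max\{k: \tau_k \leq n\}$, modulo the size of the current (incomplete) regeneration block.

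Fixing a small cutoff $c > 0$, I would split
\[
\P(|X_n - nv| < n\delta) \leq \sum_{k \geq 0} \P\bigl(N_n = k,\, Y_{k+1} \leq cn,\, |X_n - nv| < n\delta\bigr) + \P\bigl(\exists\, j \leq n + 1: T_j > cn\bigr),
\]
using the one-dimensional identity $Y_j := X_{\tau_j} - X_{\tau_{j-1}} \leq T_j := \tau_j - \tau_{j-1}$ (nearest-neighbor) to convert the spatial cutoff $Y_{k+1} > cn$ into a temporal cutoff $T_j > cn$ in the error term. On the good event, the inclusion $X_{\tau_k} \leq X_n \leq X_{\tau_{k+1}} - 1$ forces $|X_{\tau_k} - nv| \leq n(\delta + c)$ and $\tau_k \leq n$. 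Applying Lemma \ref{chcvlem} to the i.i.d.\ pairs $(X_{\tau_j} - X_{\tau_{j-1}}, \tau_j - \tau_{j-1})$ and summing over the polynomially many admissible $(a,t)$ via the variational characterization of $\bar{J}$ then yields a bound of the form $Cn^{O(1)}\exp\bigl\{-n\inf_{|x-v|\leq \delta + c}\bar{J}(x) + o(n)\bigr\}$; the passage from $\bP$ back to $\P$ introduces only a subexponential correction via decomposition at $\tau_1 < \infty$.

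The main obstacle is controlling the error term without invoking Condition~\textbf{T}. In the nestling case $\tau_1$ can have only polynomial tails, so the unconditional estimate $\P(\exists j \leq n+1: T_j > cn) \leq n\,\bP(\tau_1 > cn)$ alone does not beat $e^{-n\bar{J}(v)}$. The resolution is that the intersection of the error event with $\{|X_n - nv| < n\delta\}$ is strictly smaller than the error event alone: conditioning on the index of the (typically unique) large block reduces matters to a renewal estimate on the complementary $(1-c)n$ steps, whose rate is essentially $(1-c)\bar{J}(v/(1-c))$ by a recursive application of the good-event bound. Convexity of $\bar{J}$ gives $(1-c)\bar{J}(v/(1-c)) + c\,\bar{J}(0^+) \geq \bar{J}(v)$, so this recovers $\bar{J}(v) - O(c)$ whenever $\bar{J}(0^+) < \infty$, which holds in both cases in one dimension. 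Letting $c \to 0$ and using continuity of $\bar{J}$ on the interior of its domain completes the argument.
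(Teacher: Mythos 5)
The paper does not reproduce a proof of this proposition; it only cites Proposition~6.3.11 of \cite{MyThesis}, so there is nothing in the present source to compare against directly. Judged on its own, your sketch has the right skeleton (lower bound from Proposition~\ref{Thesis_ldplb}, one-dimensional pinning $X_{\tau_k}\le X_n < X_{\tau_{k+1}}$, cutoff on the incomplete block, then Lemma~\ref{chcvlem}), but the error-term analysis has real gaps.

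First, replacing the spatial cutoff $\{Y_{k+1}>cn\}$ by the temporal one $\{T_j>cn\}$ manufactures the very obstacle you then try to repair. The inclusion $\{Y_j>cn\}\subset\{T_j>cn\}$ is correct but throws away the essential one-dimensional input: under Assumptions~\ref{asmUE} and \ref{asmIID} with $\P(A_\ell)=1$, the regeneration \emph{distance} $X_{\tau_1}$ has exponential tails under $\bP$ even in the nestling case (each level is a fresh regeneration level with conditional probability bounded away from $0$, by ellipticity and transience; this is precisely what substitutes for Condition~\textbf{T}, part~(ii), when $d=1$). You should state and use this fact directly on $\{Y_{k+1}>cn\}$ rather than deflect to $\tau_1$.

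Second, even with exponential spatial tails, a fixed small cutoff $c$ gives an error rate of order $c'c$, which need not dominate $\bar J(v)$ when $v$ is far from $\vp$ — this is precisely why the paper's multi-dimensional upper bounds are restricted to neighborhoods of the zero set. Your proposed repair (``condition on the index of the large block and recurse'') is not a complete argument: it does not control the displacement carried by the large block (which can be as large as $cn$), it does not handle the incomplete block $N_n+1$ or the possibility of several large blocks, and the convexity inequality $(1-c)\bar J(v/(1-c))+c\bar J(0^+)\ge\bar J(v)$ is stated in a direction that does not translate into the error bound you need. A rigorous version must combine Lemma~\ref{chcvlem} applied to the complete blocks with the spatial tail of $X_{\tau_1}$ applied to the incomplete block, and then verify the resulting variational inequality — there is no shortcut through an informal recursion.

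Third, you implicitly use $t\,\bar J(a/t)\ge n\,\bar J(a/n)$ for $t\le n$ to pass from the $e^{-t\bar J(a/t)}$ of Lemma~\ref{chcvlem} to the desired $e^{-n\bar J(a/n)}$. This is Lemma~\ref{nestchcv}, which the paper proves via $\bar J(\theta\vp)=0$, valid only in the nestling case; when $P$ is non-nestling (which the hypothesis $\P(\lim X_n=+\infty)=1$ allows in $d=1$) you instead need to pair Lemma~\ref{chcvlem} with the exponential $\tau_1$-tails and the constraint $\tau_{k+1}>n$. This case distinction is absent from your sketch.
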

 
The following remains an open question:
\begin{op}\label{halfplaneconjecture}
 Do the large deviation upper bounds in Propositions \ref{Thesis_ldpub} and \ref{nestldpub} hold for all $v\in H_\ell$?
\end{op}
\textbf{Note:} An affirmative answer to Question \ref{halfplaneconjecture} would imply that $H(v) = \bar{J}(v)$ for all $v \in H_\ell$.
This would therefore imply that the answer to Question \ref{ldependence} is also affirmative. 
\end{enumerate}

\noindent\textbf{Acknowledgements:} We thank Atilla Yilmaz for pointing
out to us his work \cite{yThesis}.
\end{section}

\appendix


\begin{section}{Analyticity of Legendre Transforms}\label{Aanalytic}
Let $F:\R^d \ra \R$ be a convex function. Then, the Legendre transform $F^*$ of $F$ is defined by 
\begin{equation}
 F^*(x) = \sup_{\l \in\R^d} \l \cdot x - F(\l). \label{FLdef}
\end{equation}
\begin{lemA}\label{Appendix_analytic}
Let $F$ be strictly convex and analytic on an open subset $U\subset \R^d$. Then, $F^*$ is strictly convex and analytic in 
$U' := \{ y\in \R^d : y= \nabla F(\l) \text{ for some } \l \in U \}$. 
\end{lemA}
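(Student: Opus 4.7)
The plan is to use classical Legendre duality in tandem with the analytic inverse function theorem. The key identities are that if $y = \nabla F(\l)$ with $\l \in U$, then $F^*(y) = \l \cdot y - F(\l)$ and $\nabla F^*(y) = \l$. So analyticity of $F^*$ on $U'$ will follow once I produce an analytic inverse to the gradient map $\nabla F : U \to U'$.

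First I would verify that $\nabla F : U \to U'$ is a bijection. Injectivity uses strict convexity: if $\nabla F(\l_1) = \nabla F(\l_2)$ with $\l_1 \neq \l_2$, then the one-variable function $t \mapsto F(\l_1 + t(\l_2 - \l_1))$ is strictly convex with equal derivatives at $t=0$ and $t=1$, a contradiction. Surjectivity onto $U'$ is automatic from its definition. Next, to upgrade this bijection to an analytic diffeomorphism I would apply the analytic inverse function theorem to the analytic map $\nabla F$: provided the Jacobian $D^2 F(\l)$ is nonsingular at each $\l \in U$, the inverse $\l(\cdot) = (\nabla F)^{-1}$ is analytic on $U'$, and then $F^*(y) = \l(y) \cdot y - F(\l(y))$ is analytic on $U'$ as a composition of analytic functions.

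Strict convexity of $F^*$ on $U'$ comes for free by differentiating the identity $\nabla F(\l(y)) = y$: this yields $D^2 F(\l(y)) \, D\l(y) = I$, so $D^2 F^*(y) = D\l(y) = (D^2 F(\l(y)))^{-1}$, the inverse of a positive definite matrix, hence positive definite.

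The only real obstacle is precisely this nondegeneracy of $D^2 F$: bare strict convexity does not force the Hessian to be nonsingular. Indeed, $F(x) = x^4$ is strictly convex and analytic on $\R$, yet a direct computation gives $F^*(y) = (3/4^{4/3})\, y^{4/3}$, which is only $C^1$ and not even $C^2$, let alone analytic, at $0 = F'(0) \in U'$. In the applications of this lemma in the body of the paper, $F = \bL$ is the log-moment-generating function of the $(d+1)$-dimensional random vector $(X_{\tau_1}, \tau_1)$, which is non-degenerate by Assumption~\ref{asmUE}; for such log-MGFs $D^2 F$ is automatically positive definite on the interior of the effective domain. I would therefore read the hypothesis ``strictly convex'' in the lemma as tacitly incorporating this nondegeneracy of $D^2 F$ on $U$ (or add it as an explicit hypothesis), after which the three steps above go through.
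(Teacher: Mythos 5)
Your proof follows exactly the paper's route: invert the gradient map $\nabla F$ via the analytic inverse function theorem, write $F^*(y) = \l(y)\cdot y - F(\l(y))$ with $\l = (\nabla F)^{-1}$, and read off strict convexity of $F^*$ from $D^2 F^*(y) = (D^2 F(\l(y)))^{-1}$. The one thing you add — and it is a real contribution, not just caution — is the observation that the lemma's hypotheses are technically insufficient: the paper's proof asserts that strict convexity of $F$ on $U$ forces $D^2F$ to be strictly positive definite on $U$, and your example $F(x)=x^4$ (strictly convex, analytic, $F''(0)=0$, $F^*(y) \propto y^{4/3}$ not analytic at $0$) shows that inference is false. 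The lemma as literally stated therefore is not established by the paper's argument either; one needs nonsingularity of $D^2F$ as an additional hypothesis. Your repair is the correct one: in every application in the paper, $F$ is $\bL$ or $\bL_X$, the log-moment-generating function of a genuinely $(d+1)$- or $d$-dimensional random vector (nondegenerate by Assumption \ref{asmUE}), for which the Hessian is a covariance matrix under an exponentially tilted law and hence automatically positive definite. With that hypothesis made explicit, your three steps close the argument exactly as the paper intends.
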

\begin{proof}
Since $F$ is strictly convex on $U$, $\nabla F$ is one-to-one on $U$. Therefore, for any $x\in U'$, there exists a unique $\l(x) \in U$ such that $\nabla F(\l(x)) = x$. (That is, $x\mapsto \l(x)$ is the inverse function of $\nabla F$ restricted to $U$.) This implies, 
since $\l \mapsto \l \cdot x - F(\l)$ is a concave function in $\l$, that the supremum in \eqref{FLdef} is achieved with $\l=\l(x)$ when $x\in U'$. 
That is, 
\begin{equation}
 F^*(x) = \l(x) \cdot x - F\left((\l(x)\right), \qquad \forall x\in U'. \label{maxachieved}
\end{equation}

Since $F$ is analytic on $U$, then $\nabla F$ is also analytic on $U$. Then, a version of the inverse function theorem \cite[Theorem 7.5]{fgHolomorphic} implies that $\l(\cdot)$ is analytic on $U'$ if 
\begin{equation}
 \det \left(D^2 F(x)\right) \neq 0 , \qquad \forall x \in U, \label{Jacobianneq0}
\end{equation}
where $D^2 F$ is the matrix of second derivatives of $F$. However, since $F$ is strictly convex on $U$, $D^2 F(x)$ is strictly positive definite for all $x\in U$. Thus, \eqref{Jacobianneq0} holds and so $x\mapsto \l(x)$ is analytic on $U'$. Recalling \eqref{maxachieved}, we then obtain that $F^*$ is also analytic on $U'$. 

An application of the chain rule to \eqref{maxachieved} implies that 
\[
 \nabla F^*(x) = \l(x) \quad\text{and}\quad D^2 F^* (x) = D \l(x) = \left( D^2 F  (\l(x)) \right)^{-1}, \qquad \forall x\in U'.
\]
Since $D^2 F$ is strictly positive definite on $U$, the above implies that $D^2 F^*(x)$ is strictly positive definite for all $x\in U'$. Thus $F^*$ is strictly convex on $U'$. 
\end{proof}

\end{section}


\providecommand{\bysame}{\leavevmode\hbox to3em{\hrulefill}\thinspace}
\providecommand{\MR}{\relax\ifhmode\unskip\space\fi MR }
\providecommand{\MRhref}[2]{%
  \href{http://www.ams.org/mathscinet-getitem?mr=#1}{#2}
}
\providecommand{\href}[2]{#2}

\end{document}